\newcommand{\comm}[1]{{\color{green} #1}}
\newcommand{\old}[1]{{\color{red} #1}}
\newtheorem{thm}{Theorem}[section]
\newtheorem*{thm*}{Theorem}
\newtheorem{cor}[thm]{Corollary}
\newtheorem{lem}[thm]{Lemma}
\newtheorem{prop}[thm]{Proposition}
\theoremstyle{definition}
\newtheorem{dfn}[thm]{Definition}
\newtheorem*{dfn*}{Definition}
\newtheorem{rem}[thm]{Remark}
\newtheorem{ques}[thm]{Question}
\newtheorem*{conj*}{Conjecture}
\newtheorem{ex}[thm]{Example}
\newtheorem{nota}[thm]{Notation}
\theoremstyle{remark}
\newtheorem*{ac}{Acknowledgments}
\newtheorem{claim}{Claim}
\newtheorem*{claim*}{Claim}
\renewcommand{\qedsymbol}{$\blacksquare$}
\numberwithin{equation}{thm}
\def\cm{\mathsf{CM}}
\def\lcm{\mathsf{\underline{CM}}}
\def\lmod{\operatorname{\mathsf{\underline{mod}}}}
\def\dub{\mathsf{D}}
\def\db{\mathsf{D^b}}
\def\dpf{\mathsf{D^{perf}}}
\def\ds{\mathsf{D_{sg}}}
\def\kb{\mathsf{K^b}}
\def\proj{\mathsf{proj}}
\def\mod{\operatorname{\mathsf{mod}}}
\def\cx{\mathsf{cx}}
\def\pd{\mathsf{pd}}
\def\id{\mathsf{id}}
\def\codim{\mathsf{codim}}
\def\ltensor{\otimes^{\mathbf{L}}}
\def\rhom{\mathbf{R} \mathcal{H}om}
\def\syz{\Omega}
\def\thick{\operatorname{\mathsf{thick}}}
\def\res{\mathsf{res}}
\def\Hom{\mathsf{Hom}}
\def\End{\mathsf{End}}
\def\lHom{\underline{\mathsf{Hom}}}
\def\lEnd{\underline{\mathsf{End}}}
\def\Ext{\mathsf{Ext}}
\def\Tor{\mathsf{Tor}}
\def\Ker{\operatorname{\mathsf{Ker}}}
\def\cok{\mathsf{Coker}}
\def\nf{\mathsf{NF}}
\def\spec{\operatorname{\mathsf{Spec}}}
\def\spc{\operatorname{\mathsf{Spc}}}
\def\Proj{\operatorname{\mathsf{Proj}}}
\def\supp{\mathsf{Supp}}
\def\spp{\mathsf{Spp}}
\def\lsupp{\underline{\mathsf{Supp}}}
\def\ssupp{\mathsf{SSupp}}
\def\sing{\operatorname{\mathsf{Sing}}}
\def\sig{\sigma}
\def\del{\delta}
\def\d{\partial}
\def\m{\mathfrak{m}}
\def\n{\mathfrak{n}}
\def\p{\mathfrak{p}}
\def\q{\mathfrak{q}}
\def\F{\mathcal{F}}
\def\H{\mathsf{H}}
\def\O{\mathcal{O}}
\def\P{\mathcal{P}}
\def\T{\mathcal{T}}
\def\TT{\mathsf{T}}
\def\U{\mathcal{U}}
\def\V{\mathsf{V}}
\def\X{\mathcal{X}}
\def\Y{\mathcal{Y}}
\def\Z{\mathbb{Z}}
\def\th{\operatorname{\mathsf{Th}}}
\def\pth{\operatorname{\mathsf{PTh}}}
\def\irr{\operatorname{\mathsf{Irr}}}
\def\spcl{\operatorname{\mathsf{Spcl}}}
\def\thom{\operatorname{\mathsf{Thom}}}
\def\cl{\operatorname{\mathsf{Cl}}}
\def\nesc{\operatorname{\mathsf{Nesc}}}
\def\one{\mathbf{1}}
\def\a{\mathsf{A}}
\def\d{\mathsf{D}}
\def\ttthick{\mathsf{thick}^\otimes}
\title[Singular equivalences and reconstruction of singular loci]{singular equivalences of commutative noetherian rings and reconstruction of singular loci}
\author{Hiroki Matsui} 
\address{Graduate School of Mathematics, Nagoya University, Furocho, Chikusaku, Nagoya, Aichi 464-8602, Japan}
\email{m14037f@math.nagoya-u.ac.jp}
\date{}
\thanks{2010 {\em Mathematics Subject Classification.} 13C14, 13D09, 14F05, 18E30, 19D23, 20C20}
\thanks{{\em Key words and phrases.} triangulated category, triangulated equivalence, (classifying) support data, quasi-affine scheme, finite $p$-group, Gorenstein ring}
\thanks{The author is partly supported by Grant-in-Aid for JSPS Fellows 16J01067.
}
\begin{document}

\begin{abstract}
Two left noetherian rings $R$ and $S$ are said to be {\it singularly equivalent} if their singularity categories are equivalent as triangulated categories.
The aim of this paper is to give a necessary condition for two commutative noetherian rings to be singularly equivalent.
To do this, we develop the support theory for triangulated categories without tensor structure.
\end{abstract}

\maketitle

\section{Introduction}
Let $R$ be a left noetherian ring.
The {\it singularity category} of $R$ is by definition the Verdier quotient
$$
\mathsf{D_{sg}}(R):=\mathsf{D^b}(\mathsf{mod} R)/\kb(\proj R),
$$
which has been introduced by Buchweitz \cite{Buc}.
Here, $\mod R$ stands for the category of finitely generated left $R$-modules and $\mathsf{D^b}(\mathsf{mod} R)$ its bounded derived category, and $\kb(\proj R)$ the homotopy category of finitely generated projective $R$-modules.
The singularity categories have been deeply investigated from algebro-geometric and representation-theoretic motivations \cite{Che(1), IW, Kra, Ste, ZZ} and recently connected to the Homological Mirror Symmetry Conjecture by Orlov \cite{Orl04}.

One of the important subjects in representation theory of rings is to classify rings up to certain category equivalences.
For example, left noetherian rings $R$ and $S$ are said to be:
\begin{itemize}
\item {\it Morita equivalent} if $\mod R \cong \mod S$ as abelian categories,
\item {\it derived equivalent} if $\db(\mod R) \cong \db(\mod S)$ as triangulated categories,
\item {\it singularly equivalent} if $\ds(R) \cong \ds(S)$ as triangulated categories.
\end{itemize}
It is well-known that these equivalences have the following relations:
$$
\mbox{Morita equivalence} \Rightarrow \mbox{derived equivalence} \Rightarrow \mbox{singular equivalence}.
$$
Complete characterizations of Morita equivalence and derived equivalence have already been obtained in \cite{Mor, Ric}, while singular equivalence is quite difficult to characterize even in the case of commutative rings.
Indeed, only a few examples of singular equivalences of commutative noetherian rings are known.
For all of such known examples, the singular loci of rings are homeomorphic.
Thus, it is natural to ask the following question.

\begin{ques}\label{ques}
Let $R$ and $S$ be commutative noetherian rings.
Are their singular loci homeomorphic if $R$ and $S$ are singularly equivalent?
\end{ques}

The main purpose of this paper is to show that this question is affirmative for certain classes of commutative noetherian rings.
To be precise, we shall prove the following theorem.

\begin{thm}[Theorem \ref{mainds}] \label{main}
	Let $R$ and $S$ be commutative noetherian local rings that are locally hypersurfaces on the punctured spectra.
	Assume that $R$ and $S$ are either
	\begin{enumerate}[\rm(a)]
		\item
		Gorenstein rings, or
		\item 
		Cohen-Macaulay rings with quasi-decomposable maximal ideal.
	\end{enumerate}
	If $R$ and $S$ are singularly equivalent, then their singular loci $\sing R$ and $\sing S$ are homeomorphic.
\end{thm}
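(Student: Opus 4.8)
The plan is to recover $\sing R$ from $\ds(R)$ as a topological invariant by means of a support theory for triangulated categories that uses no monoidal structure, feeding into it the known classifications of thick subcategories of singularity categories of hypersurfaces. Recall the relevant vocabulary: a \emph{support data} on a triangulated category $\T$ is a topological space $X$ with an assignment $M\mapsto\sigma(M)$ of a closed subset to each object, subject to $\sigma(0)=\emptyset$, $\sigma(\Sigma M)=\sigma(M)$, $\sigma(M\oplus N)=\sigma(M)\cup\sigma(N)$, and $\sigma(M)\subseteq\sigma(L)\cup\sigma(N)$ for every triangle $L\to M\to N\to\Sigma L$; it is \emph{classifying} when $X$ is a noetherian sober space and $W\mapsto\{M:\sigma(M)\subseteq W\}$ is a bijection from the specialization-closed subsets of $X$ onto the thick subcategories of $\T$. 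The single formal ingredient is the \emph{uniqueness} of classifying support data: a triangle equivalence transporting classifying support data $(X,\sigma)$ and $(Y,\tau)$ induces a lattice isomorphism between the specialization-closed subsets of $X$ and those of $Y$; this isomorphism matches completely join-irreducible elements (which on either side are exactly the principal up-sets of the specialization order), hence induces an isomorphism of the specialization posets of $X$ and $Y$, and a noetherian sober space is recovered from its specialization poset --- its closed subsets being precisely the finite unions of principal up-sets --- so $X\cong Y$. It therefore suffices to equip $\ds(R)$, for each $R$ as in the theorem, with a classifying support data whose space is $\sing R$ (with its Zariski topology from $\spec R$), the support being taken functorially, say $\sigma_R(M)=\{\,\p\in\sing R:M_\p\not\cong 0\text{ in }\ds(R_\p)\,\}$ via the localizations $\ds(R)\to\ds(R_\p)$.

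The support-data axioms for $(\sing R,\sigma_R)$ follow from exactness of localization, $\sigma_R(M)$ is closed since the locus where an object of $\db(\mod R)$ is non-perfect is closed, and $\sing R$ is a noetherian sober space as it is closed in the spectral space $\spec R$ under the standing hypotheses. The content is that $(\sing R,\sigma_R)$ is \emph{classifying}. I would first dispense with the isolated-singularity case: then $\sing R$ is a single point, and since for rings of our type ``$R$ has an isolated singularity'' is equivalent to $\ds(R)$ being Hom-finite --- a property transported by any triangle equivalence --- the same holds for $S$, so both singular loci are points and we are done. When $\sing R$ has positive dimension, I would stratify along the closed point. Write $\ds_{\{\m\}}(R)$ for the thick subcategory of objects set-theoretically supported at $\m$. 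On the punctured spectrum $R$ is a sheaf of hypersurface rings, so the action of $\dpf(R)=\kb(\proj R)$ on $\ds(R)$ by $\ltensor$, combined with Stevenson's classification of thick subcategories of singularity categories of hypersurfaces, classifies the thick subcategories of the Verdier quotient $\ds(R)/\ds_{\{\m\}}(R)$ by the specialization-closed subsets of $\sing R\setminus\{\m\}$. What remains is to control $\ds_{\{\m\}}(R)$ itself and to check that, after gluing the two strata, the closed point contributes exactly the atom $\{\m\}$ to the lattice on the $\ds(R)$ side --- i.e.\ that the classification is not widened at $\m$ by cohomology operators as it would be for an arbitrary complete intersection.

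This last point is where the two hypotheses are used, and it is the main obstacle. In case (a), $R$ Gorenstein, one has $\ds(R)\simeq\lcm(R)$; I would combine Gorenstein (Auslander--Bridger) duality with a reduction of sufficiently high syzygies modulo a maximal $R$-regular sequence to identify $\ds_{\{\m\}}(R)$ and, via a vanishing analysis of stable homomorphism modules, to rule out intermediate thick subcategories there. In case (b), $R$ Cohen--Macaulay with $\m$ quasi-decomposable, I would instead use the fiber-product / Eisenbud--Shamash structure afforded by a quasi-decomposition $\m/(x)=U\oplus V$ to reduce the entire thick-subcategory classification to the hypersurface case settled by Stevenson. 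Granting this, $(\sing R,\sigma_R)$ is a classifying support data on $\ds(R)$, and likewise $(\sing S,\sigma_S)$ on $\ds(S)$; transporting the latter along a triangle equivalence $\ds(R)\simeq\ds(S)$ and invoking the uniqueness statement produces the homeomorphism $\sing R\cong\sing S$. I expect the delicate step to be the analysis of $\ds_{\{\m\}}(R)$ under each hypothesis --- everything else being either formal lattice theory or a transcription of established tensor-triangular arguments over the punctured spectrum.
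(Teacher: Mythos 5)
Your handling of case (b) is essentially correct --- for Cohen--Macaulay rings with quasi-decomposable maximal ideal that are locally hypersurfaces on the punctured spectrum, Nasseh--Takahashi's theorem does give $(\sing R,\sigma_R)$ as a classifying support data on $\ds(R)$, and the lattice-theoretic uniqueness argument you sketch (match join-irreducible specialization-closed subsets, recover the specialization poset, then the sober noetherian space) is a valid substitute for the paper's explicit construction of $\spec\T$.

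The gap is in case (a). You aim to show that $(\sing R,\sigma_R)$ is a classifying support data for all of $\ds(R)$, which you correctly observe would require showing that the stratum $\ds_{\{\m\}}(R)$ ``contributes exactly the atom $\{\m\}$'' --- i.e.\ has no intermediate thick subcategories. But this is false whenever $R$ is Gorenstein and not a hypersurface at $\m$, which the hypotheses of the theorem permit. Take, for instance, $R=T[[u]]/(u^2)$ for a one-dimensional hypersurface $T$ with isolated singularity (this appears in the paper's own examples): $R$ is Gorenstein, locally a hypersurface on the punctured spectrum, and a codimension-two complete intersection at $\m$. Then $\ds_{\{\m\}}(R)\simeq\lcm_0(R)$ carries a rich lattice of thick subcategories governed by the cohomology operators --- classified by specialization-closed subsets of a positive-dimensional ``support variety'' space, not by the two-element lattice $\{\emptyset,\{\m\}\}$. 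No Gorenstein-duality or vanishing analysis will collapse this: the extra subcategories genuinely exist, and $(\sing R,\sigma_R)$ is simply \emph{not} a classifying support data for $\ds(R)$ in case (a).

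The paper's route is different precisely to avoid this obstruction. Rather than classifying thick subcategories of $\ds(R)$ itself, it proves (Lemma 4.5) that $\lcm_0(R)$ --- the objects supported only at $\m$ --- is \emph{intrinsically} characterized inside $\lcm(R)$: $M\in\lcm_0(R)$ if and only if $\lEnd_R(M)$ is an artinian ring. This is a purely triangulated-categorical condition, so a triangle equivalence $\lcm(R)\simeq\lcm(S)$ automatically restricts to $\lcm_0(R)\simeq\lcm_0(S)$ and descends to the Verdier quotients $\lcm(R)/\lcm_0(R)\simeq\lcm(S)/\lcm_0(S)$ (Proposition 4.6). Takahashi's classification theorem is then exactly the statement that this \emph{quotient} has a classifying support data over $\sing R\setminus\{\m\}$, giving $\sing R\setminus\{\m\}\cong\sing S\setminus\{\n\}$; the full homeomorphism follows since the topology on $\sing R$ is determined by that on $\sing R\setminus\{\m\}$ together with the unique closed point. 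Your ``rule out intermediate thick subcategories'' step should be replaced by this detection of $\lcm_0(R)$ and passage to the quotient; the rich structure at $\m$ is then quotiented away rather than shown not to exist.
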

Here, we say that an ideal $I$ of a commutative noetherian ring $R$ is {\it quasi-decomposable} if there is an $R$-regular sequence $\underline{x}$ in $I$ such that $I/(\underline{x})$ is decomposable as an $R$-module.

The key role to prove this theorem is played by the {\it support theory} for triangulated categories.
The support theory has been developed by Balmer \cite{Bal02, Bal05} for tensor triangulated categories and is a powerful tool to show such a reconstruction theorem.
However, singularity categories do not have tensor triangulated structure in general.
For this reason, we develop the support theory {\it without} tensor structure which is motivated by Balmer's work \cite{Bal05}.

We have considered only topological structures of singular loci so far.
Of course, the topological structure gives us much less information than the scheme structure.
For instance, we can not distinguish isolated singularities via topological structures of those singular loci.
Therefore, reconstructing scheme structures of singular loci from singularity categories is a more interesting problem.
For a Henselian Gorenstein local ring with an isolated singularity (i.e., $\dim \sing R = 0$), its singular locus can be reconstructed from its singularity category. 
For details, please see Remark \ref{rem5}.
Thus, the next step we have to consider is the case of commutative noetherian rings with $\dim \sing R = 1$.
In this paper, we consider the following particular singularities.

Let $k$ be an algebraically closed field of characteristic $0$.
We say that a hypersurface singularity $R = k[[x_0, x_1,\ldots, x_d]]/(f)$ is of type $(\a_\infty)$ or $(\d_\infty)$ if $f$ is
\begin{align*}
(\a_\infty)\quad & x_0^2 + x_2^2 + \cdots +x_d^2, \mbox{ or }\\
(\d_\infty)\quad & x_0^2x_1+ x_2^2 + \cdots + x_d^2.
\end{align*}
For these singularities, we obtain the following result.

\begin{thm} [Theorem \ref{schstr}]
Let $R$ be a hypersurface of type $(\a_\infty)$ or $(\d_\infty)$ and $M \in \ds(R)$ an indecomposable object with $\sing R = \ssupp_R(M)$.
Then $\End_{\ds(R)}(M)$ is a commutative ring and there is a scheme isomorphism
$$
\sing R \cong \spec \End_{\ds(R)}(M).
$$
Here, we think of $\sing R$ as the reduced closed subscheme of $\spec R$.
\end{thm}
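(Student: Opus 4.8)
The plan is to move to the stable category of maximal Cohen--Macaulay modules, use Knörrer periodicity to bring the dimension down, and then compute stable endomorphism rings in a handful of base cases. Since a hypersurface is Gorenstein, Buchweitz's theorem supplies a triangle equivalence $\ds(R)\simeq\lcm(R)$; under it $M$ corresponds to an indecomposable non-free maximal Cohen--Macaulay $R$-module, again denoted $M$, with $\ssupp_R(M)=\sing R$, and $\End_{\ds(R)}(M)=\lEnd_R(M)$. Computing the Jacobian ideal of $f$ one gets, in both cases, $\sqrt{\mathrm{Jac}(R)}=(x_0,x_2,\dots,x_d)=:\p$; since $f\in\p$, the reduced singular locus is $\sing R=\spec(R/\p)$ with $R/\p\cong k[[x_1]]$, a regular one-dimensional complete local $k$-algebra. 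So the theorem reduces to the single assertion that $\End_{\ds(R)}(M)$ is isomorphic as a ring to $k[[x_1]]$: commutativity follows at once, and as $\sing R=\spec k[[x_1]]$ too, we obtain the asserted (abstract) scheme isomorphism $\sing R\cong\spec\End_{\ds(R)}(M)$.

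Before the case analysis I would record that $\lEnd_R(M)$ is a module-finite $R$-algebra (a quotient of $\End_R(M)$), that it is local since $M$ is indecomposable, and that its support equals $\ssupp_R(M)=\sing R$, because localization commutes with $\lEnd$ and $(\lEnd_R M)_\q=\lEnd_{R_\q}(M_\q)$ vanishes precisely when $M_\q$ is $R_\q$-free. Hence $\End_{\ds(R)}(M)$ is a finite local $k[[x_1]]$-algebra, and everything comes down to showing it equals $k[[x_1]]$. To lower the dimension I would apply Knörrer periodicity $\lcm(S/(g))\simeq\lcm(S[[u,v]]/(g+uv))$, which preserves endomorphism rings (being an equivalence) and, being compatible with localization, also preserves the property $\ssupp=\sing$; peeling hyperbolic planes off the quadratic part of $f$ then reduces us to the four base rings
$$
k[[x,y]]/(x^2),\qquad k[[x,y,z]]/(xy),\qquad k[[x,y]]/(x^2y),\qquad k[[x,y,z]]/(x^2y+z^2),
$$
over each of which $M$ is an indecomposable non-free MCM module of full singular support.

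For the base cases I would invoke the Buchweitz--Greuel--Schreyer classification of indecomposable MCM modules over these countable-CM-type rings, pick out the (finitely many) modules with $\ssupp_R(M)=\sing R$ --- the ``limiting'' ones rather than the eventually $\syz$-periodic ones --- and compute $\lEnd_R(M)$ by hand; since $\lEnd$ is invariant under $\syz$, two such modules differing by a syzygy need not be handled separately. For the first three rings the relevant $M$ is cyclic (for instance $R/(x)$, or $R/(xy)$ over $k[[x,y]]/(x^2y)$): one has $\End_R(M)=R/\mathrm{ann}_R(M)$, the morphisms factoring through a free module make up the ideal $\p/\mathrm{ann}_R(M)$, and so $\lEnd_R(M)\cong R/\p\cong k[[x_1]]$. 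Over $k[[x,y,z]]/(x^2y+z^2)$, where $R$ is not normal along $\sing R$, the relevant module is the prime ideal $(x,z)=\syz_R^1(R/(x,z))$; here $\End_R\big((x,z)\big)$ turns out to be the normalization $\widetilde R\cong k[[x,w]]$ with $w^2=-y$, the null-homotopic endomorphisms are exactly $(x,z)\widetilde R=(x)\subseteq\widetilde R$, and therefore $\lEnd_R(M)=\widetilde R/(x)\cong k[[w]]$ --- once again a one-variable power series ring, so $\cong k[[x_1]]$. Combining the cases gives $\End_{\ds(R)}(M)\cong k[[x_1]]\cong\mathcal{O}_{\sing R}$, which is precisely the claim.

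The main obstacle will be the genuine matrix-factorization case $k[[x,y,z]]/(x^2y+z^2)$. There $M$ is not cyclic and $\End_{\ds(R)}(M)$ is not a quotient of $R$, so the computation cannot be run ``inside $\spec R$'': one has to work with the normalization and check carefully that $\End_R\big((x,z)\big)$ is all of $\widetilde R$, that the maps factoring through free modules form exactly $(x)\widetilde R$, and that nothing further survives --- the support argument of the second step only pins $\End_{\ds(R)}(M)$ down up to a finite-length thickening of $k[[x_1]]$, and ruling out that thickening is the actual content. A lesser but non-vacuous point is to verify that Knörrer periodicity really does match the distinguished modules and their singular supports, which amounts to checking that the Knörrer functor commutes with the relevant localizations.
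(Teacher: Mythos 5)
Your proposal is correct and follows the paper's own strategy: pass to $\lcm(R)$ via Buchweitz, reduce by Kn\"orrer periodicity to the four base rings, and compute $\lEnd_R(M)$ directly for the indecomposable full-support modules; in particular, your identification of $\End_R\bigl((x,z)\bigr)$ with the normalization $\widetilde R = k[[x,w]]$ (where $w=z/x$, $w^2=-y$) and of $\mathsf{P}_R(M,M)$ with $x\widetilde R$ is exactly the fractional-ideal computation the paper carries out inside $Q(R)$, just packaged more conceptually. Two small caveats are worth noting: (i) the support argument only makes $\lEnd_R(M)$ a finite $R/\p^n$-algebra for some $n$, not an $R/\p=k[[x_1]]$-algebra, and in the $(\d_\infty)$ surface case the resulting map $k[[y]]\to\lEnd_R(M)\cong k[[w]]$, $y\mapsto -w^2$, is a degree-two normalization rather than a nilpotent thickening, so the target isomorphism $\sing R\cong\spec\lEnd_R(M)$ is an abstract isomorphism of one-dimensional regular complete local $k$-schemes and not an isomorphism over $\sing R$; (ii) your concern about Kn\"orrer periodicity matching singular supports is handled cleanly by Theorem~\ref{mainspc}, which shows $\ssupp_R(M)=\sing R$ is equivalent to the purely triangulated condition $\supp_{\ds(R)}(M)=\spec\ds(R)$ and hence is automatically preserved by any triangle equivalence, with no need to chase localizations through the Kn\"orrer functor.
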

The symbol $\ssupp_R(M)$ stands for the {\it singular support} of $M$, which is by definition the set of prime ideals $\p$ of $R$ such that $M_\p \not\cong 0$ in $\ds(R_\p)$. 
As it will be proved in Theorem \ref{mainspc}, the condition $\sing R = \ssupp_R(M)$ is characterized by using the triangulated structure on $\ds(R)$.
Thus, this theorem states that the scheme structure on $\sing R$ can be reconstructed from $\ds(R)$.

The organization of this paper is as follows.
In section 2, we introduce the notions of a support data and a classifying support data for a given triangulated category and develop the support theory without tensor structure. 
In section 3, we connect the results obtained in section 2 with the support theory for tensor triangulated categories and study reconstructing the topologies of the Balmer spectra without tensor structure.
Moreover, we give some applications to algebraic geometry and modular representation theory. 
In section 4, we prove Theorem \ref{main} and give examples of commutative rings which are not singularly equivalent. 
In section 5, we consider reconstructing scheme structures of singular loci from singularity categories.
Actually, we prove that the singular locus of a hypersurface local ring, which is countable Cohen-Macaulay representation type, is appears as the Zariski spectrum of the endomorphism ring of some objects of the singularity category.

Throughout this paper, all categories are assumed to be essentially small.
For two triangulated category $\T$, $\T'$ (resp. topological spaces $X$, $X'$), the notation $\T \cong \T'$ (resp. $X \cong X'$) means that $\T$ and $\T'$ are equivalent as triangulated categories (resp. $X$ and $X'$ are homeomorphic) unless otherwise specified.

\section{The support theory without tensor structure}
In this section, we discuss the support theory for triangulated categories without tensor structure.
Throughout this section, $\T$ denotes a triangulated category with shift functor $\Sigma$.

First of all, let us recall some basic definitions which are used in this section.

\begin{dfn}
Let $X$ be a topological space and $\T$ a triangulated category.
\begin{enumerate}[\rm(1)]
\item 
We say that $X$ is {\it sober} if every irreducible closed subset of $X$ is the closure of exactly one point.
\item 
We say that $X$ is {\it noetherian} if every descending chain of closed subspaces stabilizes.
\item
We say that a subset $W$ of $X$ is {\it specialization-closed} if it is closed under specialization, namely if an element $x$ of $X$ belongs to $W$, then the closure $\overline{\{x\}}$ is contained in $W$.
Note that $W$ is specialization-closed if and only if it is a union of closed subspaces of $X$.
\item 
We say that a non-empty additive full subcategory $\X$ of $\T$ is {\it thick} if it satisfies the following conditions:
\begin{enumerate}[\rm(i)]
	\item closed under taking shifts: $\Sigma \X = \X$.
	\item closed under taking extensions: for a triangle $L \to M \to N \to \Sigma L$ in $\T$, if $L$ and $N$ belong to $\X$, then so does $M$.
	\item closed under taking direct summands: for two objects $L, M$ of $\T$, if the direct sum $L \oplus M$ belongs to $\X$, then so do $L$ and $M$. 
\end{enumerate} 
For a subcategory $\X$ of $\T$, denote by $\thick_{\T} \X$ the smallest thick subcategory of $\T$ containing $\X$.

\end{enumerate}
\end{dfn}

We introduce the notion of a support data for a triangulated category.
\begin{dfn} \label{sd}
Let $\T$ be a triangulated category.
A {\it support data} for $\T$ is a pair $(X, \sig)$ where $X$ is a topological space and $\sigma$ is an assignment which assigns to an object $M$ of $\T$ a closed subset $\sig(M)$ of $X$
satisfying the following conditions:
\begin{enumerate}[\rm(1)]
\item
$\sig(0)= \emptyset$. 
\item
$\sig(\Sigma^n M)=\sig(M)$ for any $M \in \T$ and $n \in \Z$. 
\item
$\sig(M \oplus N) = \sig(M) \cup \sig(N)$ for any $M, N \in \T$.
\item
$\sig(M) \subseteq \sig(L) \cup \sig(N)$ for any triangle $L \to M \to N \to \Sigma L$ in $\T$.
\end{enumerate}
\end{dfn}

Support data naturally appear in various areas of algebras.

\begin{ex}\label{exam}
\begin{enumerate}[\rm(1)]
	\item
	Let $R$ be a commutative noetherian ring.
	For $M \in \ds(R)$, we define the {\it singular support} of $M$ by
	$$
	\ssupp_R(M):=\{\p \in \sing R \mid M_\p \not\cong 0 \mbox{ in } \ds(R_\p)\}.
	$$
	Then $(\sing R, \ssupp_R)$ is a support data for $\ds(R)$.
    Indeed, it follows from \cite[Theorem 1.1]{AIL} and \cite[Lemma 4.5]{BM} that $\ssupp_R(M)$ is a closed subset of $\sing R$ and that $\ssupp_R$ satisfies the condition (1) in Definition \ref{sd}.
	The remained conditions $(2)$-$(4)$ are clear because the localization functor $\ds(R) \to \ds(R_\p)$ is exact.
	
	Assume that $R$ is Gorenstein.
	Denote by $\cm(R)$ the category of maximal Cohen-Macaulay $R$-modules (i.e., modules $M$ satisfying $\Ext_R^{i}(M, R) = 0$ for all integers $i >0$).
	Recall that the stable category $\lcm(R)$ of $\cm(R)$ is the category whose objects are the same as $\cm(R)$ and the set of morphisms from $M$ to $N$ is given by
	$$
	\lHom_R(M, N) := \Hom_R(M, N)/\mathsf{P}_R(M, N),
	$$ 
	where $\mathsf{P}_R(M, N)$ consists of all $R$-linear maps from $M$ to $N$ factoring through some free $R$-module.
	Then the stable category $\lcm(R)$ has the structure of a triangulated category; see \cite{Hap}.
	Moreover, the natural inclusion induces a triangle equivalence $ F:\lcm(R) \xrightarrow{\cong} \ds(R)$ by \cite{Buc}.
	Thus we obtain the support data $(\sing R, \lsupp_R)$ for $\lcm(R)$ by using this equivalence.
	Here, 
	$$
	\lsupp_R(M) := \ssupp_R(F(M)) = \{\p \in \sing R \mid M_\p \not\cong 0 \mbox{ in } \lcm(R_p)\}
	$$
	for $M \in \lcm(R)$.
\item
Let $X$ be a noetherian scheme.
For $\F \in \dpf(X)$, we define the {\it cohomological support} of $\F$ by
$$
\supp_X(\F):=\{x \in X \mid \F_x \not\cong 0 \mbox{ in } \dpf(\O_{X, x})\}.
$$
Then, $\supp_X(\F) = \bigcup_{n \in \Z}\supp_X(\H^n(\F))$ is a finite union of supports of coherent $\O_X$-modules and hence is a closed subspace of $X$.
Moreover, $(X, \supp_X)$ is a support data for $\dpf(X)$ because the localization is exact.
For details, please see \cite{Th}.
\item
Let $k$ be a field of characteristic $p > 0$ and $G$ a finite group such that $p$ divides the order of $G$.
Then as in the case of Gorenstein rings, we can define the stable category $\lmod kG$ of $\mod kG$ and it is also a triangulated category.

We denote by
$$
\H^*(G; k) = \begin{cases} 
\oplus_{i \in \Z} \H^i(G; k) & p=2 \\
\oplus_{i \in 2\Z} \H^i(G; k) & p:\mbox{odd}
\end{cases}
$$
the direct sum of cohomologies of $G$ with coefficient $k$.
Then $\H^*(G; k)$  has the structure of a graded-commutative noetherian ring by using the cup product and we can consider its homogeneous prime spectrum $\Proj \H^*(G; k)$.
Denote by $V_G(M)$ the {\it support variety} for a finitely generated $kG$-module $M$ which is a closed space of $\Proj \H^*(G; k)$.
Then the pair $(\Proj \H^*(G; k), V_G)$ becomes a support data for $\lmod kG$.
For details, please refer to \cite[Chapter 5]{Ben}.

\end{enumerate}
\end{ex}

\begin{rem} \label{1rem}
Actually, the above examples of support data satisfy the following stronger condition:

 $(1')$ $\sig(M)=\emptyset$ if and only if $M \cong 0$. 
\end{rem}

Let us fix the following notations:

\begin{nota}
Let $\T$ be a triangulated category and $X$ a topological space.
Then we set:
\begin{itemize}
\item
$\th(\T) :=\{\mbox{thick subcategories of } \T \}$,
\item
$\spcl(X) :=\{\mbox{specialization closed subsets of } X\}$,
\item
$\cl(X) :=\{\mbox{closed subsets of } X\}$,
\item
$\irr(X) :=\{\mbox{irreducible closed subsets of } X\}$.
\end{itemize}
\end{nota}

Let $(X, \sig)$ be a support data for $\T$, $\X$ a thick subcategory of $\T$, and $W$ a specialization-closed subset of $X$. 
Then one can easily check that $f_\sig(\X):= \sig(\X):=\bigcup_{M \in \X}\sig(M)$ is a specialization-closed subset of $X$ and $g_\sig(W):= \sig^{-1}(W):= \{M \in \T \mid \sig(M) \subseteq W \}$ is a thick subcategory of $\T$.
Therefore, we obtain two order-preserving maps
$$
\xymatrix{
	\th(\T) \ar@<0.5ex>[r]^-{f_{\sig}} &
	\spcl(X) \ar@<0.5ex>[l]^-{g_{\sig}} 
}
$$
with respect to the inclusion relations.

\begin{dfn}\label{clssp}
Let $(X, \sig)$ be a support data for $\T$.
Then we say that $(X, \sig)$ is a {\it classifying support data} for $\T$ if 
\begin{enumerate}[\rm(i)]
\item 
$X$ is a noetherian sober space, and 
\item 
the above maps $f_\sig$ and $g_\sig$ are mutually inverse bijections:
$$
\xymatrix{
\th(\T) \ar@<0.5ex>[r]^-{f_{\sig}} &
\spcl(X). \ar@<0.5ex>[l]^-{g_{\sig}} 
}
$$
\end{enumerate}

One can easily check that the classifying support data $(X, \sig)$ for $\T$ satisfies the condition $(1')$ in Remark \ref{1rem}.
\end{dfn}

Here, we have to mention that our definition of a (classifying) support data is motivated by the (classifying) support data for a tensor triangulated category which was introduced and discussed by Balmer \cite{Bal05}.
In this paper, Balmer's (classifying) support data will be called {\em tensorial} (classifying) support data and the relationship between them and our (classifying) support data will be discussed in the next section.

Every classifying support data automatically satisfies the following realization property. 

\begin{lem}\label{tprin}
	Let $(X, \sig)$ be a classifying support data for $\T$.
	Then for any closed subset $Z$ of $X$, there is an object $M$ of $\T$ such that $Z=\sig(M)$. 
\end{lem}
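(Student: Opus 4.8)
The plan is to exploit the fact that a closed subset $Z$ of a noetherian space $X$ decomposes into finitely many irreducible closed subsets, and to reduce the problem to realizing each irreducible piece, then take a direct sum of the realizing objects. Since $X$ is noetherian, write $Z = Z_1 \cup \cdots \cup Z_n$ with each $Z_i$ irreducible closed. If for each $i$ we can find an object $M_i$ of $\T$ with $\sig(M_i) = Z_i$, then setting $M = M_1 \oplus \cdots \oplus M_n$ gives $\sig(M) = \sig(M_1) \cup \cdots \cup \sig(M_n) = Z$ by axiom (3) of Definition \ref{sd}. So it suffices to treat the case where $Z$ is irreducible.

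The key step is the irreducible case, and here the bijection $f_\sig,g_\sig$ does the work. Consider the specialization-closed set $W = Z$ (a closed set, in particular specialization-closed). Then $\X := g_\sig(W) = \sig^{-1}(Z)$ is a thick subcategory, and since $f_\sig$ and $g_\sig$ are mutually inverse, $f_\sig(\X) = \sig(\X) = Z$; that is, $\bigcup_{M \in \X} \sig(M) = Z$. What remains is to extract a \emph{single} object realizing $Z$ rather than a union over the whole subcategory. Here I would use irreducibility together with the noetherian property: the sets $\sig(M)$ for $M \in \X$ are closed subsets of $Z$ whose union is $Z$; I claim one of them equals $Z$. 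If not, every $\sig(M)$ is a proper closed subset of the irreducible space $Z$. Using that $\X$ is closed under finite direct sums (so the collection $\{\sig(M) : M \in \X\}$ is closed under finite unions), and that $Z$ is noetherian, pick $M \in \X$ with $\sig(M)$ maximal among all $\sig(N)$, $N \in \X$; then for any $N \in \X$ we have $\sig(M) \cup \sig(N) = \sig(M \oplus N) \supseteq \sig(M)$, so by maximality $\sig(N) \subseteq \sig(M)$, whence $Z = \bigcup_{N \in \X}\sig(N) = \sig(M)$. This gives the desired object, and in fact shows more generally (without irreducibility) that for any specialization-closed $W$ that happens to be closed, a maximal element exists and realizes $W$ directly — so the reduction to irreducibles in the first paragraph is not even strictly necessary, though it is a clean way to see the maximal element exists.

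The main obstacle is precisely guaranteeing the existence of that maximal element $\sig(M)$: the subcategory $\X$ need not be small enough for a naive maximum to exist, so one must invoke the noetherian hypothesis on $X$ (equivalently, the ascending chain condition on closed subsets) to ensure the family $\{\sig(M): M \in \X\}$, which is closed under finite unions, has a maximal member. Once that is in hand, the direct-sum stability of thick subcategories closes the argument immediately. I would therefore structure the proof as: (1) note $\X = g_\sig(Z)$ is thick and closed under finite direct sums; (2) observe $\{\sig(M): M\in\X\}$ is a nonempty family of closed subsets of $X$, closed under finite unions, hence has a maximal element $\sig(M_0)$ by noetherianity; (3) conclude $\sig(N) \subseteq \sig(M_0)$ for all $N \in \X$, so $\sig(M_0) = f_\sig(\X) = Z$, using that $f_\sig g_\sig = \operatorname{id}$.
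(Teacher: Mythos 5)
Your proof is correct, and it takes a genuinely different route from the paper's in the crucial step. The paper also reduces to the irreducible case using noetherianity and the direct-sum axiom, but then leans on \emph{sobriety}: an irreducible closed $Z$ has a generic point $x$, so $Z = \bigcup_{M \in g_\sig(Z)}\sig(M)$ forces some $\sig(M)$ to contain $x$, and since $\sig(M)$ is closed and $\sig(M)\subseteq Z=\overline{\{x\}}$, one gets $\sig(M)=Z$ immediately. You instead bypass the generic point entirely and use \emph{noetherianity} a second time, as an ascending-chain condition: the family $\{\sig(M) : M \in g_\sig(Z)\}$ of closed subsets is nonempty and closed under finite unions (because $g_\sig(Z)$ is closed under $\oplus$ and $\sig(M\oplus N)=\sig(M)\cup\sig(N)$), so it has a maximal element $\sig(M_0)$, which by maximality absorbs all the others and hence equals $f_\sig g_\sig(Z)=Z$. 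As you observe, this argument needs no reduction to irreducibles and does not use sobriety at all, so it is in a mild sense more economical — the paper's route, on the other hand, is shorter once one has already internalized the sober structure of $X$, and the generic-point idea is the one actually reused later (e.g.\ in Lemma \ref{cls} and Theorem \ref{main2}), which is probably why the author phrases it that way. Both approaches are sound.
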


\begin{proof}
	Since $X$ is a noetherian sober space and $\sig(M) \cup \sig(N)=\sig(M \oplus N)$, we may assume that $Z=\overline{\{x\}}$ for some $x \in X$.
	From the assumption, one has $Z=f_\sig g_\sig(Z) = \bigcup_{M \in g_\sig (Z)} \sig(M)$. 
	Hence, there is an element $x$ of $\sig(M)$ for some $M \in g_\sig(Z)$.
	Then we obtain $x \in \sig(M) \subseteq Z=\overline{\{x\}}$ and this implies that $\sig(M)=\overline{\{x\}}=Z$.
\end{proof}

Next, let us introduce the notion of a spectrum of a triangulated category.

\begin{dfn}
\begin{enumerate}[\rm(1)]
\item
We say that a thick subcategory $\X$ of $\T$ is {\it principal} if there is an object $M$ of $\T$ such that $\X=\thick_{\T} M$.
Denote by $\pth(\T)$ the set of all principal thick subcategories of $\T$.
\item
We say that a non-zero principal thick subcategory $\X$ of $\T$ is {\it irreducible} if $\X=\thick_{\T}(\X_1 \cup \X_2)$ ($\X_1, \X_2 \in \pth(\T)$) implies that $\X_1=\X$ or $\X_2=\X$.
Denote by $\spec \T$ the set of all irreducible thick subcategories of $\T$.
\item
For $M \in \T$, set 
$$
\supp_\T(M) := \{\X \in \spec \T \mid \X \subseteq \thick M  \} \subseteq \spec \T.
$$
We consider a topology on $\spec \T$ with closed subbasis $\{\supp_\T(M) \mid M \in \T \}$.
We call this topological space the {\it spectrum} of $\T$.
\end{enumerate}
\end{dfn}

The following lemma shows that by using classifying support data, we can also classify principal thick subcategories and irreducible thick subcategories.

\begin{lem}\label{cls}
Let $(X, \sig)$ be a classifying support data for $\T$, then the one-to-one correspondence
$$
\xymatrix{
\th(\T) \ar@<0.5ex>[r]^-{f_{\sig}} &
\spcl(X), \ar@<0.5ex>[l]^-{g_{\sig}} 
}
$$
which restricts to one-to-one correspondences
$$
\xymatrix{
\pth(\T) \ar@<0.5ex>[r]^-{f_{\sig}} &
\cl(X), \ar@<0.5ex>[l]^-{g_{\sig}} 
}
$$
$$
\xymatrix{
\spec \T \ar@<0.5ex>[r]^-{f_{\sig}} &
\irr(X). \ar@<0.5ex>[l]^-{g_{\sig}} 
}
$$
\end{lem}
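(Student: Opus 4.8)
The plan is to leverage the hypothesis that $(X,\sig)$ is a classifying support data, so that $f_\sig$ and $g_\sig$ already give mutually inverse bijections $\th(\T)\leftrightarrow\spcl(X)$, and then show that each of these bijections carries the relevant subclass on one side onto the relevant subclass on the other. Since $f_\sig$ and $g_\sig$ are order-preserving mutual inverses, it suffices to characterize the subclasses $\pth(\T)$, $\spec\T$ on the categorical side and $\cl(X)$, $\irr(X)$ on the topological side in lattice-theoretic terms that are visibly preserved, together with one genuinely geometric input: the realization property of Lemma \ref{tprin}.

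First I would treat principal thick subcategories. Given $M\in\T$, I claim $f_\sig(\thick_\T M)=\sig(M)$: the inclusion $\supseteq$ is clear, and $\subseteq$ holds because $\sig^{-1}(\sig(M))$ is a thick subcategory containing $M$, hence contains $\thick_\T M$. This shows $f_\sig$ sends $\pth(\T)$ into $\cl(X)$. Conversely, given a closed subset $Z$, Lemma \ref{tprin} produces $M\in\T$ with $\sig(M)=Z$; then $g_\sig(Z)=\sig^{-1}(Z)$ is thick and contains $M$, and by the bijection it equals $f_\sig^{-1}(Z)=\thick_\T M$ (the smallest thick subcategory with support $Z$, using the previous computation), so $g_\sig(Z)$ is principal. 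Hence $f_\sig$ and $g_\sig$ restrict to inverse bijections between $\pth(\T)$ and $\cl(X)$.

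For the second restriction I would work entirely inside the already-established bijection $\pth(\T)\leftrightarrow\cl(X)$. The key observation is that under $f_\sig$ the operation $\thick_\T(\X_1\cup\X_2)$ corresponds to union of closed sets: indeed $f_\sig(\thick_\T(\X_1\cup\X_2))=\sig(\X_1)\cup\sig(\X_2)$ by the same ``smallest thick subcategory'' argument as above, since $\sig^{-1}(\sig(\X_1)\cup\sig(\X_2))$ is thick and contains $\X_1\cup\X_2$. Therefore a non-zero principal thick subcategory $\X$ is irreducible (in the sense that $\X=\thick_\T(\X_1\cup\X_2)$ with $\X_i$ principal forces $\X_i=\X$ for some $i$) if and only if its image $Z=f_\sig(\X)$ is non-empty and $Z=Z_1\cup Z_2$ with $Z_i$ closed forces $Z_i=Z$ for some $i$ — and the latter is exactly the statement that $Z\in\irr(X)$, because $X$ is noetherian (so every closed set is a finite union of irreducible closed sets, reducing the two-term condition to the general one) and because $\emptyset$ corresponds to the zero subcategory. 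Restricting the bijection $\pth(\T)\leftrightarrow\cl(X)$ to these distinguished elements then yields the desired bijection $\spec\T\leftrightarrow\irr(X)$.

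The main obstacle I anticipate is bookkeeping around the ``smallest thick subcategory with prescribed support'' principle: one must repeatedly use that for any specialization-closed (resp.\ closed) set $W$, the subcategory $g_\sig(W)$ is simultaneously the $f_\sig$-preimage of $W$ and contains every object whose support lies in $W$, so that $f_\sig(\thick_\T\mathcal{Y})=\bigcup_{M\in\mathcal{Y}}\sig(M)$ for any subcategory $\mathcal{Y}$. Once this identity is recorded, everything else is a transport of the noetherian-space fact that irreducibility can be tested on two-term unions of closed sets. No deeper geometric input beyond Lemma \ref{tprin} and noetherianity of $X$ is needed.
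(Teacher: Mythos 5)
Your proposal is correct and follows essentially the same route as the paper: both begin from the identity $f_\sig(\thick_\T M)=\sig(M)$ to settle the principal case (with Lemma~\ref{tprin} giving surjectivity), and both establish that $f_\sig$ converts $\thick_\T(\X_1\cup\X_2)$ into $f_\sig(\X_1)\cup f_\sig(\X_2)$ (and dually for $g_\sig$) and then transport the two-term irreducibility conditions across the bijection $\pth(\T)\leftrightarrow\cl(X)$. One small remark: your aside invoking noetherianity to ``reduce the two-term condition to the general one'' is unnecessary here, since irreducibility of a closed set is defined by the two-term condition and matches the two-term condition defining an irreducible thick subcategory directly; noetherianity is only used inside Lemma~\ref{tprin}.
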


\begin{proof}
Note that $f_\sig(\thick_{\T} M)= \sig(M)$ for any $M \in \T$.
Therefore, the injective map $f_\sig: \th(\T) \to \spcl(X)$ induces a well defined injective map $f_\sig: \pth(\T) \to \cl(X)$.
The surjectivity has been already proved in Lemma \ref{tprin}.

Next, we show the second one-to-one correspondence.
For $\X_1, \X_2 \in \th(\T)$, one has 
\begin{align}
f_\sig(\thick_{\T}(\X_1 \cup \X_2)) &= \bigcup_{M \in \thick_{\T}(\X_1 \cup \X_2)}\sig(M) \tag{1} \\
&= \bigcup_{M \in \X_1 \cup \X_2}\sig(M) \notag \\
&=(\bigcup_{M \in \X_1}\sig(M)) \cup (\bigcup_{M \in \X_2}\sig(M)) \notag \\
&= f_\sig(\X_1) \cup f_\sig(\X_2). \notag 
\end{align}
On the other hand, for $Z_1, Z_2 \in \spcl(X)$, one has
\begin{align*}
f_\sig(\thick_{\T}(g_\sig(Z_1) \cup g_\sig(Z_2))) &= f_\sig(g_\sig(Z_1)) \cup f_\sig(\sig(Z_2)) \\ 
&= Z_1 \cup Z_2.
\end{align*}
Applying $g_{\sig}$ to this equality, we get 
\begin{equation}
\thick_{\T}(g_\sig(Z_1) \cup g_\sig(Z_2))=g_{\sig}(Z_1 \cup Z_2). \tag{2}
\end{equation}

Let $W$ be an irreducible closed subset of $X$.
Assume $g_\sig(W) = \thick_\T(\X_1 \cup \X_2)$ for some $\X_1, \X_2 \in \pth(\T)$.
Then, from the above equality (1), we obtain
$$
W = f_\sig(g_\sig(W)) = f_\sig(\thick_\T(\X_1 \cup \X_2)) = f_\sig(\X_1) \cup f_\sig(\X_2).
$$ 
Since $W$ is irreducible, $f_\sig(\X_1) = W$ or $f_\sig(\X_2) = W$ and hence $\X_1 = g_\sig(f_\sig(\X_1))=g_\sig(W)$ or $\X_2 = g_\sig(f_\sig(\X_2))=g_\sig(W)$.
This shows that $g_\sig(W)$ is irreducible.

Conversely, take a irreducible thick subcategory $\X$ of $\T$ and assume $f_\sig(\X) = Z_1 \cup Z_2$ for some closed subsets $Z_1, Z_2$ of $X$.
From the above equality (2), we get
$$
\X = g_\sig(f_\sig(\X)) = g_\sig(Z_1 \cup Z_2) = \thick_\T(g_\sig(Z_1) \cup g_\sig(Z_2)).
$$
Since $\X$ is irreducible, $\X = g_\sig(Z_1)$ or $\X = g_\sig(Z_2)$ and therefore, $Z_1 = f_\sig(g_\sig(Z_1)) = f_\sig(\X)$ or $Z_2 = f_\sig(g_\sig(Z_2)) = f_\sig(\X)$.
Thus, $f_\sig(\X)$ is irreducible. 

These observations show the second one-to-one correspondence.
\end{proof} 

The following theorem is the main result of this section which is an analogous result to \cite[Theorem 5.2]{Bal05}.

\begin{thm}\label{main2}
Let $\T$ be an essentially small triangulated category which admits a classifying support data $(X, \sig)$.
Then there is a homeomorphism $\varphi: X \xrightarrow{\cong} \spec \T$ which restricts to a homeomorphism $\varphi: \sig(M) \xrightarrow{\cong} \supp_\T (M)$ for each $M \in \T$.
\end{thm}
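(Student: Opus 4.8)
The plan is to imitate Balmer's reconstruction argument (\cite[Theorem 5.2]{Bal05}), but replacing the prime-ideal/Zariski-spectrum machinery by the bijections furnished by Lemma \ref{cls}. We already have, from Lemma \ref{cls}, mutually inverse bijections $f_\sig\colon \spec\T \to \irr(X)$ and $g_\sig\colon \irr(X)\to\spec\T$. Since $X$ is a noetherian sober space, every irreducible closed subset of $X$ is the closure of a \emph{unique} point, so $x\mapsto \overline{\{x\}}$ is a bijection $X\to\irr(X)$. Composing, I would \emph{define}
$$
\varphi\colon X \longrightarrow \spec\T,\qquad \varphi(x):=g_\sig\bigl(\overline{\{x\}}\bigr),
$$
which is automatically a bijection of sets. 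It remains to check that $\varphi$ carries $\sig(M)$ onto $\supp_\T(M)$ for each $M\in\T$, and then to deduce that $\varphi$ is a homeomorphism.

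First I would establish the key set-theoretic identity $\varphi(\sig(M)) = \supp_\T(M)$ for all $M\in\T$. Unwinding definitions, $\X:=g_\sig(\overline{\{x\}})\in\supp_\T(M)$ means $\X\subseteq\thick_\T M$, i.e.\ $g_\sig(\overline{\{x\}})\subseteq g_\sig(\sig(M))$; since $g_\sig$ is an inclusion-preserving bijection with inclusion-preserving inverse $f_\sig$, this is equivalent to $\overline{\{x\}}\subseteq\sig(M)$, which (as $\sig(M)$ is closed) is equivalent to $x\in\sig(M)$. Hence $\varphi(\sig(M))=\supp_\T(M)$, and in particular $\varphi$ maps the closed subbasis $\{\sig(M)\}$ of $X$ onto the closed subbasis $\{\supp_\T(M)\}$ of $\spec\T$. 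Here one must be slightly careful: $\{\sig(M)\mid M\in\T\}$ is a family of closed sets of $X$ closed under finite unions (by axiom (3) of a support data), and by Lemma \ref{tprin} it is \emph{exactly} the family of all closed subsets of the noetherian space $X$; so it is not merely a subbasis but a basis for the closed sets, and a bijection matching it up with the closed subbasis $\{\supp_\T(M)\}$ of $\spec\T$ is automatically continuous. For the reverse direction I would likewise show that each $\supp_\T(M)$, and hence each finite union and arbitrary intersection of them, pulls back under $\varphi^{-1}$ (equivalently, pushes forward under $\varphi$ viewed the other way) to a closed set: since $\varphi(\sig(M))=\supp_\T(M)$ and $\varphi$ is a bijection, $\varphi^{-1}(\supp_\T(M))=\sig(M)$ is closed, so $\varphi$ is continuous; and $\varphi$ sends the closed set $\sig(M)$ to the closed set $\supp_\T(M)$, so $\varphi$ is closed. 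A continuous closed bijection is a homeomorphism, and it then restricts to a homeomorphism $\sig(M)\xrightarrow{\cong}\supp_\T(M)$ by the displayed identity.

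I expect the main obstacle to be the bookkeeping around topologies rather than any deep point: one must verify that $\{\supp_\T(M)\mid M\in\T\}$ really behaves well enough — in particular that it is stable under finite unions, which amounts to $\supp_\T(M)\cup\supp_\T(N)=\supp_\T(M\oplus N)$, proved by transporting the identity $\sig(M)\cup\sig(N)=\sig(M\oplus N)$ across the bijections of Lemma \ref{cls} — so that the ``closed subbasis'' defining $\spec\T$ is in fact a basis for the closed sets and matches the basis $\cl(X)$ of $X$ under $\varphi$. A secondary subtlety worth spelling out is the use of soberness: it is exactly what makes $x\mapsto\overline{\{x\}}$ a bijection $X\to\irr(X)$, hence what makes $\varphi$ injective; without it one would only get a surjection. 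Once these are in place the argument is a direct diagram chase through Lemma \ref{cls} and Lemma \ref{tprin}, with no further input needed.
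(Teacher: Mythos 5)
Your proposal is correct and follows essentially the same route as the paper: define $\varphi$ as the composite $X\xrightarrow{\iota_X}\irr(X)\xrightarrow{g_\sig}\spec\T$, use the inclusion-preserving mutually inverse bijections of Lemma \ref{cls} to show that $\varphi$ carries $\sig(M)$ onto $\supp_\T(M)$, and conclude the homeomorphism from the fact that $\{\supp_\T M\}$ is a closed subbasis for $\spec\T$ while (by Lemma \ref{tprin}) $\{\sig(M)\}$ exhausts $\cl(X)$. You merely spell out in more detail the topological bookkeeping that the paper compresses into its final sentence.
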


\begin{proof}
First note that for a topological space $X$, the natural map $\iota_X : X \to \irr(X), x \mapsto \overline{\{x\}}$ is bijective if and only if $X$ is sober.

From Lemma \ref{cls}, we have a bijective map
$$
\varphi: X \xrightarrow{\iota_X} \irr(X) \xrightarrow{g_\sig} \spec \T.
$$
As $\X \subseteq \thick M \Leftrightarrow f_\sig(\X) \subseteq f_{\sig}(\thick M) =\sig(M)$, this bijection restricts to
$$
\varphi: \sig(M) \xrightarrow{\cong} \supp_\T M.
$$
Since the topology on $\spec \T$ is given by the closed subbasis $\{\supp_\T M \mid M \in \T \}$, we conclude that $\varphi : X \to \spec \T$ is a homeomorphism.
\end{proof}

Note that $\spec \T$ is determined by the triangulated structure of $\T$.
Therefore, if two triangulated categories $\T$ and $\T'$ are equivalence as triangulated categories, then $\spec \T$ and $\spec \T'$ are homeomorphic. 
From this observation, we obtain the following corollary, which shows that classifying support data is unique up to homeomorphism.

\begin{cor}\label{cor}
Consider the following settings:
\begin{itemize}
\item
$\T$ and $\T'$ are triangulated categories.
\item 
$(X, \sig)$ and $(Y, \tau)$ are classifying support data for $\T$ and $\T'$.
\end{itemize}
Suppose that there is a triangle equivalence $F: \T \xrightarrow{\cong} \T'$.
Then there is a homeomorphism $\varphi:X \xrightarrow{\cong} Y$ which restricts to a homeomorphism
$
\varphi: \sig(M) \xrightarrow{\cong} \tau(F(M))
$ 
for each $M \in \T$.
\end{cor}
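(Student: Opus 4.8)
The plan is to deduce the corollary from Theorem \ref{main2} together with the observation that the assignment $\T \mapsto \spec\T$ is functorial with respect to triangle equivalences. The point is that $\spec\T$, the topology on it, and the closed subsets $\supp_\T(M)$ are all defined purely in terms of the triangulated structure of $\T$, so a triangle equivalence should transport all of this data.

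First I would check that the triangle equivalence $F:\T \xrightarrow{\cong} \T'$ induces a homeomorphism $\spec F:\spec\T \xrightarrow{\cong} \spec\T'$. Since $F$ is an equivalence of triangulated categories, it carries thick subcategories to thick subcategories and satisfies $F(\thick_\T M) = \thick_{\T'}(F(M))$ for every $M \in \T$; in particular it preserves principality, and, being compatible with the formation of $\thick_\T(\X_1 \cup \X_2)$, it preserves irreducibility of principal thick subcategories. Hence $\X \mapsto F(\X)$ is a bijection $\pth(\T) \to \pth(\T')$ restricting to a bijection $\spec\T \to \spec\T'$. Moreover, from $F(\thick_\T M) = \thick_{\T'}(F(M))$ one reads off
$$
\spec F\big(\supp_\T(M)\big) = \supp_{\T'}(F(M)),
$$
and since the topologies on $\spec\T$ and $\spec\T'$ are given by the closed subbases $\{\supp_\T(M)\mid M \in \T\}$ and $\{\supp_{\T'}(N)\mid N \in \T'\}$, this bijection and its inverse are continuous, so it is a homeomorphism.

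Next I would invoke Theorem \ref{main2} twice: it provides a homeomorphism $\varphi_X : X \xrightarrow{\cong} \spec\T$ which restricts to $\sig(M) \xrightarrow{\cong} \supp_\T(M)$ for each $M \in \T$, and a homeomorphism $\varphi_Y : Y \xrightarrow{\cong} \spec\T'$ which restricts to $\tau(N) \xrightarrow{\cong} \supp_{\T'}(N)$ for each $N \in \T'$. I would then define
$$
\varphi := \varphi_Y^{-1} \circ (\spec F) \circ \varphi_X : X \xrightarrow{\cong} Y,
$$
a composite of homeomorphisms. Tracing an object $M \in \T$ through the composite, $\varphi$ sends $\sig(M)$ homeomorphically onto $\supp_\T(M)$, then onto $\supp_{\T'}(F(M))$, then onto $\tau(F(M))$; this yields the asserted restriction $\varphi:\sig(M)\xrightarrow{\cong}\tau(F(M))$.

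I expect no genuine obstacle here: the only step requiring (routine) verification is that $F$ respects the constructions $\pth$, $\spec\T$, and $\supp_\T$, which is immediate from the identity $F(\thick_\T M) = \thick_{\T'}(F(M))$ and the fact that $F$ is an equivalence; the rest is a formal composition of the homeomorphisms supplied by Theorem \ref{main2}.
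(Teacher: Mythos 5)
Your proof is correct. The key claims — that a triangle equivalence $F$ induces a bijection $\tilde F:\th(\T)\to\th(\T')$ satisfying $\tilde F(\thick_\T M)=\thick_{\T'}F(M)$, hence preserving principality and (via compatibility with $\thick(\X_1\cup\X_2)$) irreducibility, and that $\tilde F(\supp_\T M)=\supp_{\T'}F(M)$, making $\spec F$ a homeomorphism — are all routine and accurate, and the composition $\varphi=\varphi_Y^{-1}\circ\spec F\circ\varphi_X$ has the required restriction property.

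Your route is genuinely a little different from the paper's, though closely related. You realize the homeomorphism $\spec F:\spec\T\xrightarrow{\cong}\spec\T'$ directly, and sandwich it between the two homeomorphisms supplied by Theorem \ref{main2}. The paper instead transports the support data on the target back to the source: it defines $\tau^F(M):=\tau(F(M))$, checks that $(Y,\tau^F)$ is a \emph{classifying} support data for $\T$ (via the identities $f_{\tau^F}=f_\tau\circ\tilde F$ and $\tilde F\circ g_{\tau^F}=g_\tau$), and then applies Theorem \ref{main2} twice to the \emph{same} category $\T$, concluding $X\cong\spec\T\cong Y$. The paper's version avoids having to discuss the functoriality of $\spec(-)$ explicitly — it never needs to verify that $\spec F$ is continuous or open — at the cost of a small verification that the pulled-back data is still classifying. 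Your version makes the (intuitive and true) functoriality of $\spec$ explicit, which the paper only gestures at in the remark preceding the corollary. Both are equally short; yours is perhaps more conceptually transparent, while the paper's stays closer to the machinery already set up around $f_\sig$ and $g_\sig$.
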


\begin{proof}
From the assumption, $F$ induces a one-to-one correspondence 
$$
\tilde{F}: \th(\T) \xrightarrow{\cong} \th(\T'), \,\, \X \mapsto \tilde{F}(\X),
$$
where $\tilde{F}(\X):=\{N \in \T' \mid \exists M \in \X \mbox{ such that } N \cong F(M)\}$.
For an object $M$ of $\T$, set $\tau^F(M):= \tau(F(M))$.
Then we can easily verify that the pair $(Y, \tau^F)$ is a support data for $\T$.
Furthermore, it becomes a classifying support data for $\T$.
Indeed, for $\X \in  \th(\T)$ and $W \in \cl(Y)$, we obtain
$$
f_{\tau^F}(\X) = \bigcup_{M \in \X} \tau^F(M) = \bigcup_{M \in \X} \tau(F(M)) = \bigcup_{N \in \tilde{F}(\X)} \tau(N) = f_\tau(\tilde{F}(\X)),
$$
\begin{align*}
\tilde{F}(g_{\tau^F}(W)) &= \tilde{F}(\{M \in \T \mid \tau^F(M) \subseteq W \}) \\
&=\{N \in \T' \mid \tau(N) \subseteq W\} =g_\tau(W).
\end{align*}
From these equalities, we get equalities $f_{\tau^F}= f_\tau \circ \tilde{F}$ and $\tilde{F} \circ g_{\tau^F} = g_\tau$ and thus $f_{\tau^F}$ and $g_{\tau^F}$ give mutually inverse bijections between $\th(\T)$ and $\cl(Y)$.
Consequently, we obtain two classifying support data $(X, \sig)$ and $(Y, \tau^F)$ for $\T$, and hence both $X$ and $Y$ are homeomorphic to $\spec \T$ by Theorem \ref{main2}.
\end{proof}

\section{Comparison with tensor triangulated structure}
In this section, we discuss relation between the support theory we discussed in section 2 and the support theory for tensor triangulated categories.

Recall that a tensor triangulated category $(\T, \otimes, \one)$ consists of a triangulated category $\T$ together with a symmetric monoidal tensor product $\otimes$ with unit object $\one$ which is compatible with the triangulated structure of $\T$.
For the precise definition, please refer to \cite[Appendix A]{HPS}.

\begin{ex} \label{ttc}
\begin{enumerate}[\rm(1)]
\item 
Let $X$ be a noetherian scheme.
Then $(\dpf(X), \ltensor_{\O_X}, \O_X)$ is a tensor triangulated category.
Here, $\ltensor_{\O_X}$ denotes the derived tensor product.
\item 
Let $k$ be a field and $G$ a finite group.
Then $(\lmod kG, \otimes_k, k)$ is a tensor triangulated category.
\end{enumerate}
\end{ex}

Throughout this section, fix a tensor triangulated category $(\T, \otimes, \one)$.
We begin with recalling some basic definitions which are used in the support theory of tensor triangulated categories.

\begin{dfn}
\begin{enumerate}[\rm(1)]
\item
A full subcategory $\X$ of $\T$ is called a {\it thick tensor ideal} if it is a thick subcategory of $\T$ and is closed under the action of $\T$ by $\otimes$: $M \otimes N \in \X$ for any $M \in \X$ and $N \in \T$.
For a subcategory $\X$ of $\T$, denote by $\langle \X \rangle$ the smallest thick tensor ideal of $\T$ containing $\X$.
\item
For a thick subcategory $\X$ of $\T$, define its {\it radical} by
$$
\sqrt{\X}:= \{M \in \T \mid \exists n > 0 \mbox{ such that } M^{\otimes n} \in \X\}.
$$ 
Here, $M^{\otimes n}$ denotes the $n$-fold tensor product of $M$.
By \cite[Lemma 4.2]{Bal05}, the radical of a thick subcategory is always a thick tensor ideal.

A thick tensor ideal $\X$ of $\T$ is called {\it radical} if it satisfies $\X = \sqrt{\X}$.

\item
A thick tensor ideal $\X$ of $\T$ is called {\it prime} if it satisfies
$$
M \otimes N \in \X \Rightarrow M \in \X \mbox{ or } N \in \X.
$$
Denote by $\spc \T$ the set of all prime thick tensor ideals of $\T$.
\item 
For $M \in \T$, the {\it Balmer support} of $M$ is defined as $\spp_{\T} M := \{\P \in \spc (\T, \otimes) \mid M \notin \P\}$.
The set $\spc (\T, \otimes)$ is a topological space with closed basis $\{\spp_{\T} M \mid M \in \T\}$ and call it the {\it Balmer spectrum} of $\T$.
\item 
Let $X$ be a topological space.
We say that a subset $W$ of $X$ is a {\it Thomason subset} if it is a union of closed subsets whose complements are quasi-compact.
Denote by $\thom(X)$ the set of all Thomason subsets of $X$.
Note that $\thom(X) \subseteq \spcl(X)$.
\end{enumerate}
\end{dfn}

We say that a support data $(X, \sig)$ for $\T$ is {\it tensorial} if it satisfies:
$$
\sig(M \otimes N) = \sig(M) \cap \sig(N)
$$
for any $M, N \in \T$.
In \cite{Bal05}, tensorial support data are called simply support data.
Then $g_\sig(W)$ is a radical thick tensor ideal of $\T$ for every specialization-closed subset $W$ of $X$. 
We say that a tensorial support data $(X, \sig)$ is {\it classifying} if $X$ is a noetherian sober space and there is a one-to-one correspondence:
$$
\xymatrix{
	\{\mbox{radical thick tensor ideals of } \T\} \ar@<0.5ex>[r]^-{f_{\sig}} &
	\spcl(X). \ar@<0.5ex>[l]^-{g_{\sig}} 
}
$$

Balmer showed the following celebrated result:
\begin{thm}\cite[Lemma 2.6, Theorem 4.10]{Bal05}
\begin{enumerate}[\rm(1)]
\item 
The pair $(\spc (\T, \otimes), \spp_{\T})$ is a tensorial support data for $\T$.
\item
There is a one-to-one correspondence:
$$
\xymatrix{
	\{\mbox{radical thick tensor ideals of } \T\} \ar@<0.5ex>[r]^-{f_{\spp}} &
	\thom(\spc (\T, \otimes)). \ar@<0.5ex>[l]^-{g_{\spp}} 
}
$$
\end{enumerate}
\end{thm}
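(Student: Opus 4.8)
The plan is to establish part (1) by a direct verification against Definition~\ref{sd}, and to deduce part (2) from a single structural input: that every thick $\otimes$-ideal of $\T$, once radicalised, is the intersection of the prime thick $\otimes$-ideals containing it. Everything in part (2) other than that input is formal bookkeeping with the support-data axioms.

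For (1): by construction the sets $\spp_\T(M)$, $M\in\T$, form a basis of closed subsets of $\spc(\T,\otimes)$ — this is a legitimate closed basis because $\spp_\T(0)=\emptyset$ and $\spp_\T(M)\cup\spp_\T(N)=\spp_\T(M\oplus N)$ — so in particular each $\spp_\T(M)$ is closed. I would then check the four axioms of a support data by passing to complements in the defining properties of a prime thick $\otimes$-ideal $\P$: $0\in\P$ for every $\P$; $\P=\Sigma\P$ gives $\spp_\T(\Sigma^nM)=\spp_\T(M)$; $M\oplus N\in\P$ iff $M\in\P$ and $N\in\P$ (closure under summands and finite sums) gives axiom (3); $L,N\in\P\Rightarrow M\in\P$ for a triangle $L\to M\to N\to\Sigma L$ (closure under extensions) gives axiom (4). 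Tensoriality $\spp_\T(M\otimes N)=\spp_\T(M)\cap\spp_\T(N)$ is precisely the conjunction of ``$M\in\P$ or $N\in\P\Rightarrow M\otimes N\in\P$'' (tensor ideal) with its converse (``prime'').

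For (2), write $f=f_{\spp}$ and $g=g_{\spp}$. The heart of the matter is the claim that $\sqrt{\X}=\bigcap\{\,\P\in\spc\T\mid\X\subseteq\P\,\}$ for every thick $\otimes$-ideal $\X$. The inclusion $\subseteq$ is formal, since primes are radical. For $\supseteq$ I must produce, for each $M\notin\sqrt{\X}$ (that is, $M^{\otimes n}\notin\X$ for all $n\geq 1$), a prime $\P\supseteq\X$ with $M\notin\P$: apply Zorn's lemma to the poset of thick $\otimes$-ideals containing $\X$ and avoiding every $M^{\otimes n}$ — a directed union of such ideals is again one of them, the thickness axioms (in particular summand-closure) being finitary — to obtain a maximal element $\P$, and then verify $\P$ is prime. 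If $A\otimes B\in\P$ with $A\notin\P$ and $B\notin\P$, maximality forces $M^{\otimes a}\in\langle\P,A\rangle$ and $M^{\otimes b}\in\langle\P,B\rangle$ for some $a,b\geq 1$, and feeding $A\otimes B\in\P$ into the explicit description of the thick $\otimes$-ideal generated by $\P$ together with one further object yields $M^{\otimes(a+b)}=M^{\otimes a}\otimes M^{\otimes b}\in\P$, a contradiction. I expect this tensor-nilpotence argument to be the only real obstacle; the rest is formal.

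Granting the claim, the bijection follows in four short steps. First, $g(W)$ is a radical thick $\otimes$-ideal for any $W\subseteq X$, directly from the support-data axioms and tensoriality, radicality coming from $\spp_\T(M^{\otimes n})=\spp_\T(M)$. Second, for radical $\X$ one has $M\in g(f(\X))$ iff $\spp_\T(M)\subseteq\bigcup_{N\in\X}\spp_\T(N)$ iff every prime $\P\supseteq\X$ contains $M$ iff $M\in\sqrt{\X}=\X$, so $g\circ f=\id$. Third, $X\setminus\spp_\T(M)$ is quasi-compact (\cite[\S 2]{Bal05}) and $(X\setminus\spp_\T(M))\cap(X\setminus\spp_\T(N))=X\setminus\spp_\T(M\oplus N)$, so the closed subsets with quasi-compact complement are exactly the $\spp_\T(M)$ and a Thomason subset is exactly an arbitrary union of them; hence $f(\X)=\bigcup_{M\in\X}\spp_\T(M)$ lies in $\thom(X)$. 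Fourth, for a Thomason set $W=\bigcup_i\spp_\T(M_i)$ one has $f(g(W))\subseteq W$ trivially, while $\P\in W$ implies $\P\in\spp_\T(M_i)$ for some $i$ with $M_i\in g(W)$, so $W\subseteq f(g(W))$. Combining these, $f$ and $g$ are mutually inverse bijections between the radical thick $\otimes$-ideals of $\T$ and $\thom(\spc(\T,\otimes))$, which is exactly statement (2).
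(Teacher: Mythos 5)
The paper states this result as a pure citation of Balmer's Lemma~2.6 and Theorem~4.10 and supplies no proof of its own, so there is nothing in the paper to compare against. Your reconstruction is correct and follows Balmer's original strategy: Zorn's lemma applied to the poset of thick $\otimes$-ideals containing $\X$ and avoiding all tensor powers $M^{\otimes n}$ to produce primes, the identification $\sqrt{\X}=\bigcap\{\P\in\spc(\T,\otimes)\mid\X\subseteq\P\}$ (the tensor-nilpotence step, relying on $\langle\P,A\rangle\otimes\langle\P,B\rangle\subseteq\langle\P,A\otimes B\rangle$), and Balmer's observation that the open sets $\spc(\T,\otimes)\setminus\spp_{\T}(M)$ are precisely the quasi-compact opens, so that the image of $f_{\spp}$ consists exactly of the Thomason subsets.
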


\begin{rem}
If a topological space $X$ is noetherian, then every specialization-closed subset of $X$ is Thomason.
Therefore, the above theorem shows that $(\spc (\T, \otimes), \spp)$ is a classifying tensorial support data for $\T$ provided $\spc (\T, \otimes)$ is noetherian.
\end{rem}

Recall that a tensor triangulated category $\T$ is {\it rigid} if
\begin{enumerate}
\item 
the functor $M \otimes -: \T \to \T$ has a right adjoint $F(M, -): \T \to \T$ for each $M \in \T$ and
\item
every object $M$ is {\it strongly dualizable} (i.e., the natural map $F(M, \one) \otimes N \to F(M, N)$ is an isomorphism for each $N \in \T$).
\end{enumerate}
If $\T$ is rigid, then $(\spc (\T, \otimes), \spp_\T)$ satisfies the stronger condition.

\begin{lem}\label{1rgdsup}
Assume that $\T$ is rigid.
Then the support data $(\spc (\T, \otimes), \spp_\T)$ satisfies the condition $(1')$ in Remark \ref{1rem}.
\end{lem}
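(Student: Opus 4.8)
The plan is to establish the nontrivial implication: if $\spp_\T(M) = \emptyset$, then $M \cong 0$; the converse is exactly axiom (1) of a support data. We may assume $\T \neq 0$, since otherwise there is nothing to prove and every object is zero.

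First I would translate the vanishing of the Balmer support into $\otimes$-nilpotence. If $\spp_\T(M) = \emptyset$, then $M$ lies in every prime thick $\otimes$-ideal of $\T$; in particular every prime meets the $\otimes$-multiplicative family $\{\one, M, M^{\otimes 2}, \dots\}$. By Balmer's prime ideal theorem \cite[Lemma~2.2]{Bal05}, a proper thick $\otimes$-ideal disjoint from such a family would be contained in a prime disjoint from it, which cannot happen here; hence the proper thick $\otimes$-ideal $\langle 0\rangle$ meets $\{\one, M, M^{\otimes 2}, \dots\}$, and since $\one \not\cong 0$ this means $M^{\otimes n} \cong 0$ for some $n \geq 1$ (equivalently, $M \in \sqrt{\langle 0\rangle}$). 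So it remains to show that in a rigid tensor triangulated category $M^{\otimes n} \cong 0$ forces $M \cong 0$ — that is, the zero ideal is radical.

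This is the step where rigidity is used, and I expect it to be the crux. For any object $N$ — automatically strongly dualizable, by hypothesis — write $DN := F(N, \one)$ for its dual. The evaluation $DN \otimes N \to \one$ and coevaluation $\one \to N \otimes DN$ satisfy the zig-zag identity, so $\id_N$ factors as
$$
N \;\cong\; \one \otimes N \;\longrightarrow\; N \otimes DN \otimes N \;\longrightarrow\; N \otimes \one \;\cong\; N .
$$
Thus $N$ is a retract of $N \otimes DN \otimes N$; since a split monomorphism in a triangulated category sits in a split triangle, $N$ is in fact a direct summand of $N \otimes DN \otimes N$, which by symmetry is isomorphic to $N^{\otimes 2} \otimes DN$. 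Taking $N = M^{\otimes(n-1)}$, this exhibits $M^{\otimes(n-1)}$ as a direct summand of $M^{\otimes(2n-2)} \otimes D(M^{\otimes(n-1)})$.

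To conclude I would run a downward induction on $n$. For $n = 1$ there is nothing to prove. For $n \geq 2$ we have $2n - 2 \geq n$, so $M^{\otimes(2n-2)} \cong M^{\otimes n} \otimes M^{\otimes(n-2)} \cong 0$; hence $M^{\otimes(2n-2)} \otimes D(M^{\otimes(n-1)}) \cong 0$, and therefore its direct summand $M^{\otimes(n-1)}$ is zero. Iterating down to $n = 1$ gives $M \cong 0$, as desired. (Alternatively, one may simply cite the fact that every thick $\otimes$-ideal of a rigid tensor triangulated category is radical.) The only genuinely delicate point is the passage "split monomorphism $\Rightarrow$ direct summand", i.e. that a triangle whose first morphism is a split mono is isomorphic to a direct-sum triangle; the remaining manipulations with the duality data are purely formal.
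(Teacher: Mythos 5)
Your proposal is correct and follows essentially the same route as the paper's proof: both translate $\spp_\T(M) = \emptyset$ into $\otimes$-nilpotence $M^{\otimes n} \cong 0$ via Balmer's results (you invoke \cite[Lemma~2.2]{Bal05} directly, the paper cites the equivalent Corollary~2.4), and both then use rigidity to kill the nilpotence by a descending induction on the exponent. The only presentational difference is that you re-derive from the zig-zag identity the retraction of $N$ off $N^{\otimes 2}\otimes DN$, which the paper simply quotes as \cite[Lemma~A.2.6]{HPS}.
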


\begin{proof}
Take an object $M \in \T$ with $\spp(M) = \emptyset$.
By \cite[Corollary 2.4]{Bal05}, there is a positive integer $n$ such that $M^{\otimes n} \cong 0$ where $M^{\otimes n}$ stands for the $n$-fold tensor product of $n$-copies of $M$.
On the other hand, by \cite[Lemma A 2.6]{HPS}, $M^{\otimes i}$ belongs to $\ttthick_{\T}(M^{\otimes 2i})$ for any positive integer since every object is strongly dualizable.
Therefore, by using induction, we conclude that $M \cong 0$.
\end{proof}

Note that a tensorial classifying support data for $\T$ is a classifying tensorial support data for $\T$.
Indeed, for a tensorial classifying support data $(X, \sig)$ for $\T$ and $\X \in \th(\T)$, we obtain an equalities
$$
\X = g_\sig(f_\sig(\X)) = g_\sig(f_\sig(\sqrt{\ttthick{\X}})) = \sqrt{\ttthick \X}.
$$
The following lemma gives a criterion for the converse implication of this fact.

\begin{lem}\label{tensor}
Let $(X, \sig)$ be a classifying tensorial support data for $\T$.
Suppose that $\T$ is rigid.
Then the following are equivalent:
\begin{enumerate}[\rm(1)]
\item
There is a one-to-one correspondence:
$$
\xymatrix{
\th(\T) \ar@<0.5ex>[r]^-{f_{\sig}} &
\spcl(X). \ar@<0.5ex>[l]^-{g_{\sig}} 
}
$$
\item
$(X, \sig)$ is a classifying support data for $\T$. 
\item 
Every thick subcategory of $\T$ is a thick $\otimes$-ideal.
\item 
$\T = \thick_{\T} \one$.
\end{enumerate}
\end{lem}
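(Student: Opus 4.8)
The plan is to establish the chain of implications $(1)\Leftrightarrow(2)$, $(1)\Rightarrow(3)$, $(3)\Leftrightarrow(4)$, and $(3)\Rightarrow(1)$. All but the last are formal consequences of the tensoriality of $\sig$ and exactness of $\otimes$; rigidity enters only in the implication $(3)\Rightarrow(1)$.

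For $(1)\Leftrightarrow(2)$ there is essentially nothing to prove: by hypothesis $X$ is a noetherian sober space, and a tensorial support data is in particular a support data for $\T$, so the only extra content of being a \emph{classifying} support data is the bijectivity of $f_\sig$ and $g_\sig$ between $\th(\T)$ and $\spcl(X)$, which is precisely condition $(1)$. For $(1)\Rightarrow(3)$, note that under $(1)$ every thick subcategory $\X$ has the form $\X=g_\sig(W)$ with $W=f_\sig(\X)\in\spcl(X)$; since $\sig$ is tensorial, $g_\sig(W)$ is a radical thick tensor ideal, in particular a thick $\otimes$-ideal, so $\X$ is a thick $\otimes$-ideal. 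For $(3)\Rightarrow(4)$: the subcategory $\thick_\T\one$ is thick, hence a thick $\otimes$-ideal by $(3)$, and therefore contains $\one\otimes N\cong N$ for every $N\in\T$, giving $\thick_\T\one=\T$. For $(4)\Rightarrow(3)$: given a thick subcategory $\X$ and $M\in\X$, the full subcategory $\{N\in\T\mid M\otimes N\in\X\}$ is thick (because $M\otimes-$ is exact and $\X$ is thick) and contains $\one$ (as $M\otimes\one\cong M\in\X$), hence contains $\thick_\T\one=\T$; thus $M\otimes N\in\X$ for all $N\in\T$, so $\X$ is a thick $\otimes$-ideal.

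The substantive step is $(3)\Rightarrow(1)$, where I would first use rigidity to show that \emph{every thick tensor ideal $\X$ is automatically radical}. Indeed, if $M^{\otimes n}\in\X$, then $M^{\otimes m}\in\X$ for all $m\ge n$ (tensor with $M^{\otimes(m-n)}$), and by \cite[Lemma A.2.6]{HPS}, exactly as in the proof of Lemma \ref{1rgdsup}, one has $M^{\otimes i}\in\ttthick_\T(M^{\otimes 2i})\subseteq\ttthick_\T(\X)=\X$ whenever $2i\ge n$; iterating $i\mapsto\lceil i/2\rceil$ drives the exponent down to $1$, so $M\in\X$ and hence $\X=\sqrt{\X}$. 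Consequently, granting $(3)$, the set $\th(\T)$ coincides with the set of radical thick tensor ideals of $\T$. Since $(X,\sig)$ is a classifying tensorial support data, $f_\sig$ and $g_\sig$ already restrict to mutually inverse bijections between the radical thick tensor ideals and $\spcl(X)$, and these are now bijections between $\th(\T)$ and $\spcl(X)$, which is $(1)$.

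I expect the radicality step just described to be the main obstacle, as it is the only point at which rigidity (strong dualizability of every object) genuinely intervenes; once it is in place, the remainder is bookkeeping that transports the already-available Balmer-type classification of radical thick tensor ideals across the identification $\th(\T)=\{\text{radical thick tensor ideals of }\T\}$.
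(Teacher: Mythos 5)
Your proof is correct and follows essentially the same route as the paper's. The only differences are organizational: you close the cycle via $(4)\Rightarrow(3)\Rightarrow(1)$ instead of the paper's single bundled step $(4)\Rightarrow(1)$, and you re-derive (rather than cite, as the paper does via \cite[Proposition 4.4]{Bal05}) the fact that in a rigid tensor triangulated category every thick tensor ideal is radical.
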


\begin{proof}
By Lemma \ref{1rgdsup} and Theorem \cite[Theorem 5.2]{Bal05}, $(X, \sig)$ satisfies the condition $(1')$ in Remark \ref{1rem}.
Therefore, $(1)$ and $(2)$ means the same conditions from Remark \ref{3rem}.

$(1) \Rightarrow (3)$: 
From the assumption, every thick subcategory $\X$ of $\T$ is of the form $\X = g_\sig(W)$ for some specialization-closed subset $W$ of $X$.
On the other hand, $g_\sig(W)$ is a radical thick $\otimes$-ideal as $(X, \sig)$ is a tensorial support data.

$(3) \Rightarrow (4)$: 
By assumption, the thick subcategory $\thick_{\T} \one$ is a thick tensor ideal.
Thus, for any $M \in \T$, $M \cong M \otimes \one$ belongs to $\thick_{\T} \one$.

$(4) \Rightarrow (1)$: 
Note that $\one$ is strongly dualizable and the family of all strongly dualizable objects forms a thick subcategory of $\T$ by \cite[Theorem A.2.5 (a)]{HPS}.
Therefore, every object of $\T = \thick_\T \one$ is strongly dualizable.
Thus, for any object $M \in \T$, $M$ belongs to $\ttthick_{\T}( M \otimes M )$ by \cite[Lemma A.2.6]{HPS}.
Then \cite[Proposition 4.4]{Bal05} shows that every thick tensor ideal of $\T$ is radical.

On the other hand, for any thick subcategory $\X$ of $\Y$, one can easily verify that the subcategory
$
\Y:=\{M \in \T \mid M \otimes \X \subseteq \X \}
$
is a thick $\otimes$-ideal of $\T$ containing $\one$. 
Thus, we obtain $\Y = \thick_{\T}\one = \T$ and hence $\X$ is a thick $\otimes$-ideal.

From these discussion, we conclude that every thick subcategory of $\T$ is a radical thick $\otimes$-ideal and this shows the implication $(4) \Rightarrow (1)$.
\end{proof}

The following corollaries are direct consequences of this lemma, Theorem \ref{main2} and Corollary \ref{cor}.

\begin{cor}
Let $\T$ be a rigid tensor triangulated category.
Assume that the Balmer spectrum $\spc (\T, \otimes)$ of $\T$ is noetherian and $\T= \thick_{\T} \one$.
Then, there is a homeomorphism $\varphi: \spc (\T, \otimes) \xrightarrow{\cong} \spec \T$ which restricts to a homeomorphism $\varphi: \spp_\T M \xrightarrow{\cong} \supp_\T M$ for each $M \in \T$.

\end{cor}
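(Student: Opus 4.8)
The plan is to deduce this corollary by combining the equivalences established in Lemma~\ref{tensor} with the reconstruction statement of Theorem~\ref{main2}. First I would observe that the hypotheses given---$\T$ rigid, $\spc(\T,\otimes)$ noetherian, and $\T=\thick_\T\one$---are exactly what is needed to invoke Lemma~\ref{tensor}. Indeed, since $\spc(\T,\otimes)$ is noetherian, the Remark following Balmer's theorem tells us that $(\spc(\T,\otimes),\spp_\T)$ is a classifying tensorial support data for $\T$: it is a noetherian sober space (sobriety of the Balmer spectrum is automatic) and the maps $f_{\spp},g_{\spp}$ give mutually inverse bijections between radical thick tensor ideals and specialization-closed (= Thomason, by noetherianity) subsets. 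So the standing hypothesis of Lemma~\ref{tensor} that $(X,\sig)=(\spc(\T,\otimes),\spp_\T)$ is a classifying tensorial support data and $\T$ is rigid is satisfied.

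Next, condition (4) of Lemma~\ref{tensor}, namely $\T=\thick_\T\one$, holds by assumption. Therefore by the equivalence $(4)\Rightarrow(2)$ in Lemma~\ref{tensor}, the pair $(\spc(\T,\otimes),\spp_\T)$ is not merely a classifying \emph{tensorial} support data but a classifying support data for $\T$ in the sense of Definition~\ref{clssp}. At this point the tensor structure has done its job and can be discarded.

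Now I would simply feed this classifying support data into Theorem~\ref{main2}. That theorem, applied to the essentially small triangulated category $\T$ equipped with the classifying support data $(\spc(\T,\otimes),\spp_\T)$, produces a homeomorphism $\varphi:\spc(\T,\otimes)\xrightarrow{\cong}\spec\T$ which restricts, for each $M\in\T$, to a homeomorphism from $\spp_\T M$ onto $\supp_\T M$. This is precisely the assertion of the corollary, so the proof is complete.

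I do not anticipate any genuine obstacle here, since all the work has already been carried out in Lemma~\ref{tensor} and Theorem~\ref{main2}; the only point requiring a little care is the verification that $(\spc(\T,\otimes),\spp_\T)$ satisfies the topological hypotheses of Definition~\ref{clssp}---noetherian and sober---but noetherianity is hypothesized and sobriety of the Balmer spectrum is a standard fact from \cite{Bal05}. One should also note in passing that the statement of the corollary does not need the condition $(1')$ of Remark~\ref{1rem}, but it holds automatically here by Lemma~\ref{1rgdsup} together with rigidity, which is consistent with the general fact recorded after Definition~\ref{clssp} that any classifying support data satisfies $(1')$.
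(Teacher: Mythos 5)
Your argument is exactly the route the paper has in mind: verify that $(\spc(\T,\otimes),\spp_\T)$ is a classifying tensorial support data (noetherianity plus Balmer's theorem and the remark), upgrade it to a classifying support data via the implication $(4)\Rightarrow(2)$ of Lemma~\ref{tensor} using $\T=\thick_\T\one$, and then apply Theorem~\ref{main2}. The paper states the corollary as a direct consequence of Lemma~\ref{tensor} and Theorem~\ref{main2}, and your writeup fills in precisely those steps with no gaps.
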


\begin{cor}\label{cor2}
Let $(\T, \otimes, \one)$ and $(\T', \otimes', \one')$ be rigid tensor triangulated categories such that
\begin{itemize}
\item 
$\spc (\T, \otimes)$ and $\spc (\T', \otimes')$ are noetherian, and
\item 
$\T$ and $\T'$ are generated by their units $\one$ and $\one'$, respectively.
\end{itemize}
If $\T$ and $\T'$ are equivalent just as triangulated categories, then $\spc (\T, \otimes)$ and $\spc (\T', \otimes')$ are homeomorphic.
\end{cor}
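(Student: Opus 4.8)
The plan is to reduce Corollary \ref{cor2} to Corollary \ref{cor} by exhibiting each Balmer spectrum as a classifying support data in the sense of Definition \ref{clssp} (\emph{not} merely as a classifying tensorial support data). This is the right move because the hypothesis only supplies a triangle equivalence $F\colon \T \xrightarrow{\cong}\T'$, which a priori forgets the tensor structures, whereas Corollary \ref{cor} transports classifying support data along arbitrary triangle equivalences.

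First I would check that $(\spc(\T,\otimes),\spp_\T)$ is a classifying tensorial support data for $\T$: Balmer's theorem gives that it is a tensorial support data and that $f_{\spp}$ and $g_{\spp}$ set up a bijection between radical thick tensor ideals of $\T$ and Thomason subsets of $\spc(\T,\otimes)$; since $\spc(\T,\otimes)$ is noetherian by hypothesis, every specialization-closed subset is Thomason, so this is a bijection onto $\spcl(\spc(\T,\otimes))$, as required. Next, because $\T$ is rigid and $\T = \thick_\T \one$, condition (4) of Lemma \ref{tensor} is satisfied, hence so is its equivalent condition (2): $(\spc(\T,\otimes),\spp_\T)$ is a classifying support data for $\T$. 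The identical reasoning, applied to $\T'$, shows that $(\spc(\T',\otimes'),\spp_{\T'})$ is a classifying support data for $\T'$.

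Finally, I would invoke Corollary \ref{cor} with the triangle equivalence $F$ and the two classifying support data just produced; this yields a homeomorphism $\varphi\colon \spc(\T,\otimes)\xrightarrow{\cong}\spc(\T',\otimes')$ (indeed one restricting to homeomorphisms $\spp_\T M \xrightarrow{\cong}\spp_{\T'}(F M)$ for every $M \in \T$), which is the assertion. There is no serious obstacle here; the argument is an assembly of the results of Sections 2--3. The only points demanding attention are verifying that the noetherian hypothesis on the Balmer spectrum upgrades Balmer's Thomason-subset classification to a specialization-closed classification, and that the generation hypothesis $\T=\thick_\T\one$ is literally condition (4) of Lemma \ref{tensor} — with rigidity supplying the remaining inputs, notably Lemma \ref{1rgdsup}, so that $\spp_\T$ detects the zero object.
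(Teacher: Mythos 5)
Your argument is correct and coincides with the paper's own route: the paper states these corollaries are ``direct consequences'' of Lemma \ref{tensor}, Theorem \ref{main2} and Corollary \ref{cor}, which is precisely the assembly you carried out — noetherianity upgrades Thomason subsets to specialization-closed ones, Lemma \ref{tensor}(4)$\Rightarrow$(2) with rigidity makes each Balmer spectrum a classifying support data, and Corollary \ref{cor} transports it along the triangle equivalence.
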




Next, we consider several applications of results we discussed in this section to tensor triangulated categories appeared in Example \ref{ttc}.

Thomason showed the following classification theorem of thick tensor ideas of $\dpf(X)$.

\begin{thm}\cite[Theorem 3.15]{Th}
Let $X$ be a noetherian scheme.
Then $(X, \supp_X)$ is a classifying tensorial support data for $\dpf(X)$.
\end{thm}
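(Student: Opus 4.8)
The plan is to verify the three ingredients in the definition of a classifying tensorial support data and then invoke Thomason's classification theorem. First, $X$ is noetherian by hypothesis, and the underlying topological space of any scheme is sober, so the point-set topological requirement is immediate. Second, one must check that $(X, \supp_X)$ is a tensorial support data. That it is a support data was already recorded in Example \ref{exam}(2); tensoriality amounts to the identity $\supp_X(\F \ltensor_{\O_X} \G) = \supp_X(\F) \cap \supp_X(\G)$. Since formation of stalks is exact and $\supp_X$ is defined stalkwise, this reduces to the local statement: for a noetherian local ring $(A,\m)$ and perfect complexes $P, Q \in \dpf(A)$, one has $P \ltensor_A Q \cong 0$ if and only if $P \cong 0$ or $Q \cong 0$. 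This in turn follows by replacing $P$ and $Q$ by their minimal free complexes and inspecting the top (equivalently, bottom) nonvanishing homology of $P \ltensor_A Q$ after applying $- \otimes_A A/\m$, using Nakayama's lemma.

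Next, I would bring in Thomason's classification theorem \cite[Theorem 3.15]{Th}: for a noetherian scheme $X$, the assignments $f_{\supp}$ and $g_{\supp}$ restrict to mutually inverse bijections between the radical thick tensor ideals of $\dpf(X)$ and the Thomason subsets of $X$. Because $X$ is noetherian, every specialization-closed subset of $X$ is Thomason (as noted in the remark following the definition of Thomason subsets), so $\thom(X) = \spcl(X)$. Combining these facts, $f_{\supp}$ and $g_{\supp}$ give mutually inverse bijections between the radical thick tensor ideals of $\dpf(X)$ and $\spcl(X)$, which is exactly the condition for $(X, \supp_X)$ to be a classifying tensorial support data.

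The only genuinely technical point is the tensoriality identity, i.e. the local statement about perfect complexes over a noetherian local ring; everything else is either a hypothesis (noetherianity), a standard fact about schemes (soberness), a bookkeeping reduction (Thomason subsets coincide with specialization-closed subsets in the noetherian case), or a direct citation of Thomason's theorem. One should also note that "radical thick tensor ideal" is the correct target of Thomason's bijection: in $\dpf(X)$ every thick tensor ideal is automatically radical, which is part of Thomason's package (the tensor-nilpotence phenomenon), so no extra argument is needed there.
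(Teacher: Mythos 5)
Your argument is correct, and since the paper states this result as a direct citation of Thomason with no proof, there is no in-paper argument to compare against; what you have done is supply the standard derivation that the citation is implicitly relying on. You correctly identify the three things that need checking: soberness (automatic for schemes) and noetherianity (hypothesis); tensoriality, which you rightly reduce via stalks to the statement that over a noetherian local ring a derived tensor product of nonzero perfect complexes is nonzero, proved by passing to minimal free complexes and using Nakayama; and the bijection, which is Thomason's original theorem phrased in terms of Thomason subsets, combined with the observation that on a noetherian space every specialization-closed subset is Thomason (the remark the paper itself records immediately after introducing Thomason subsets). Your final remark that every thick tensor ideal of $\dpf(X)$ is automatically radical — because $\dpf(X)$ is rigid, so $\F \in \ttthick(\F \otimes \F)$ for every perfect $\F$ — is a correct consistency check, though not logically needed since the definition of classifying tensorial support data already restricts attention to radical ideals.
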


As an application of Theorem \ref{main2}, we can reconstruct underlying topological spaces of a certain class of schemes from their perfect derived categories without tensor structure. 

\begin{thm}\label{mainsch}
Let $X$ be a noetherian quasi-affine scheme.
Then there is a homeomorphism
$$
\varphi: X \xrightarrow{\cong} \spec \dpf(X)
$$
which restricts to a homeomorphism
$
\varphi: \supp_X (\F) \xrightarrow{\cong} \supp_{\dpf(X)} (\F)
$
for each $\F \in \dpf(X)$.
\end{thm}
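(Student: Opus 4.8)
The plan is to recognize $(X,\supp_X)$ as a classifying support data for $\dpf(X)$ in the sense of Definition \ref{clssp} and then quote Theorem \ref{main2}. Thomason's theorem \cite[Theorem 3.15]{Th} already tells us that, $X$ being noetherian, the pair $(X,\supp_X)$ is a classifying tensorial support data for $\dpf(X)$; in particular $X$ is a noetherian sober space, since the underlying space of any scheme is sober. Moreover $\dpf(X)$ is rigid: the internal hom $\rhom_{\O_X}(\F,-)$ furnishes a right adjoint to $\F\ltensor_{\O_X}-$, and every perfect complex $\F$ is strongly dualizable with dual $\rhom_{\O_X}(\F,\O_X)$. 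Thus Lemma \ref{tensor} is available, and it reduces the problem to checking condition (4) of that lemma, namely
$$
\dpf(X)=\thick_{\dpf(X)}\O_X .
$$

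To establish this I would use quasi-affineness directly. By definition $X$ is a quasi-compact open subscheme of an affine scheme $\spec A$ (quasi-compactness being automatic since $X$ is noetherian); here $A$ itself need not be noetherian, but this is harmless. Over the affine base one has $\dpf(\spec A)=\thick_{\dpf(\spec A)}\O_{\spec A}$, because every perfect complex over a ring is quasi-isomorphic to a bounded complex of finitely generated projective modules and hence belongs to the thick subcategory generated by the ring. The localization theorem for perfect complexes (Thomason--Trobaugh, together with Neeman's results on idempotent completions) then shows that the essential image of the restriction functor $\dpf(\spec A)\to\dpf(X)$ generates $\dpf(X)$ as a thick subcategory. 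Since this restriction sends $\O_{\spec A}$ to $\O_X$, it follows that $\dpf(X)=\thick_{\dpf(X)}\O_X$, which is exactly condition (4).

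Granting this, Lemma \ref{tensor} yields that $(X,\supp_X)$ is a classifying support data for $\dpf(X)$, and $\dpf(X)$ is essentially small by the blanket assumption of the paper. Theorem \ref{main2} then produces a homeomorphism $\varphi\colon X\xrightarrow{\cong}\spec\dpf(X)$ restricting to homeomorphisms $\varphi\colon\supp_X(\F)\xrightarrow{\cong}\supp_{\dpf(X)}(\F)$ for every $\F\in\dpf(X)$, which is the assertion.

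I expect the only real obstacle to be the identity $\dpf(X)=\thick_{\dpf(X)}\O_X$: this is where quasi-affineness enters in an essential way (it fails already for $X=\mathbb{P}^1_k$), and it rests on the localization theorem for perfect complexes rather than on any formal manipulation of support data. Everything else is a matter of assembling Thomason's classification, rigidity of $\dpf(X)$, Lemma \ref{tensor}, and Theorem \ref{main2}.
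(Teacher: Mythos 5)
Your proof is correct, and it follows the same high-level strategy as the paper: reduce to showing that $(X,\supp_X)$ is a classifying support data and then invoke Theorem \ref{main2}. The difference lies in which of the equivalent conditions of Lemma \ref{tensor} is verified and how. The paper verifies condition (3) directly: it notes that $X$ is quasi-affine if and only if $\O_X$ is an ample line bundle, and then cites Thomason's Proposition 3.11.1 to conclude at once that every thick subcategory of $\dpf(X)$ is a thick tensor ideal. You instead verify condition (4), namely $\dpf(X)=\thick_{\dpf(X)}\O_X$, from scratch: you embed $X$ as a quasi-compact open of an affine scheme $\spec A$, observe that $\dpf(\spec A)=\thick\,\O_{\spec A}$, and deduce the corresponding statement for $X$ from the Thomason--Trobaugh localization theorem (and Neeman's treatment of idempotent completions), using that thick subcategories are closed under summands. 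Both routes are logically equivalent via Lemma \ref{tensor}; yours is more self-contained but appeals to heavier machinery, while the paper's is shorter because it quotes a ready-made consequence of ampleness. Either way, the argument is sound.
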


\begin{proof}
First, let me remark that the functor $\F \ltensor_{\O_X} - : \dpf(X) \to \dpf(X)$ has a right adjoint $\rhom_{\O_X}(\F, -): \dpf(X) \to \dpf(X)$ for each $\F \in \dpf(X)$ and moreover $\dpf(X)$ is rigid.

Note that a scheme $X$ is quasi-affine if and only if its structure sheaf $\O_X$ is ample.	
Thus, every thick subcategory of $\dpf(X)$ is thick tensor ideal by \cite[Proposition 3.11.1]{Th}. 
Applying Theorem \ref{main2}, we obtain the result.
\end{proof}

As a direct consequence of the theorem, we have a necessary condition for derived equivalences for noetherian quasi-affine schemes.

\begin{cor}
Let $X$ and $Y$ be noetherian quasi-affine schemes (i.e., open subschemes of affine schemes). 
If $X$ and $Y$ are derived equivalent (i.e., $\dpf(X) \cong \dpf(Y)$ as triangulated categories), then $X$ and $Y$ are homeomorphic.
In particular, topologically determined properties, such as the dimensions and the numbers of irreducible components  of quasi-affine noetherian schemes are preserved by derived equivalences. 
\end{cor}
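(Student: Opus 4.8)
The plan is to deduce this corollary directly from Theorem~\ref{mainsch}, so essentially no new work is required. First I would apply Theorem~\ref{mainsch} to each of the two schemes separately: since $X$ and $Y$ are noetherian quasi-affine schemes, we get homeomorphisms $\varphi_X\colon X \xrightarrow{\cong} \spec \dpf(X)$ and $\varphi_Y\colon Y \xrightarrow{\cong} \spec \dpf(Y)$.

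Next I would observe that the space $\spec \T$, together with the assignment $M \mapsto \supp_\T(M)$, is built purely out of the triangulated structure of $\T$ (thick subcategories, principal thick subcategories, irreducibility, and the closed subbasis $\{\supp_\T(M) \mid M \in \T\}$). Hence a triangle equivalence $F\colon \dpf(X) \xrightarrow{\cong} \dpf(Y)$ carries irreducible thick subcategories to irreducible thick subcategories and respects the closed subbases, and so induces a homeomorphism $\spec \dpf(X) \xrightarrow{\cong} \spec \dpf(Y)$. More cleanly, one may simply invoke Corollary~\ref{cor}: by the proof of Theorem~\ref{mainsch} the pairs $(X, \supp_X)$ and $(Y, \supp_Y)$ are classifying support data for $\dpf(X)$ and $\dpf(Y)$, so Corollary~\ref{cor} applied to the equivalence $F$ yields directly a homeomorphism $X \xrightarrow{\cong} Y$ (which moreover restricts to homeomorphisms on cohomological supports). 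Composing $\varphi_X$ with this and $\varphi_Y^{-1}$ settles the first assertion.

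For the ``in particular'' clause I would just note that the Krull dimension of a scheme is the supremum of the lengths of chains of irreducible closed subsets of its underlying topological space, and that the number of irreducible components is the number of maximal irreducible closed subsets; both are manifestly invariant under homeomorphism, hence agree for derived equivalent $X$ and $Y$.

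I do not expect a genuine obstacle here: the only point deserving a word of care is the claim that $\spec(-)$ is an invariant of the triangulated category, which is immediate from its definition and is in any case already packaged in Corollary~\ref{cor}. All the real content sits in Theorem~\ref{mainsch}, whose proof in turn rests on Thomason's classification in \cite{Th}, the characterization of quasi-affineness by ampleness of $\O_X$, and \cite[Proposition~3.11.1]{Th}, so nothing further needs to be proved at this stage.
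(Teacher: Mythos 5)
Your proposal is correct and follows the route the paper intends: apply Theorem~\ref{mainsch} to both schemes, use the fact that $\spec(-)$ (equivalently, Corollary~\ref{cor} applied to the classifying support data $(X,\supp_X)$ and $(Y,\supp_Y)$) is a triangulated invariant, compose the homeomorphisms, and then observe that dimension and number of irreducible components are purely topological. This matches the paper's treatment, which presents the corollary as a direct consequence of Theorem~\ref{mainsch} without further argument.
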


\begin{rem}
Let $X$ and $Y$ be noetherian schemes.
\begin{enumerate}[\rm(1)]
\item 
By \cite[Proposition 9.2]{Ric}, if $X$ and $Y$ are affine, then a derived equivalence $\dpf(X) \cong \dpf(Y)$ implies that $X$ and $Y$ are isomorphic as schemes. 
\item
By  \cite[Theorem 9.7]{Bal02}, if $\dpf(X)$ and $\dpf(Y)$ are equivalent as tensor triangulated categories, then $X$ and $Y$ are isomorphic as schemes.
\end{enumerate}
\end{rem}

Next consider stable module categories over group rings of finite groups.
In this case, the following classification theorem is given by Benson-Carlson-Rickard for algebraically closed field $k$ and by Benson-Iyengar-Krause for general $k$.

\begin{thm}\cite{BCR, BIK} 
Let $k$ be a field of characteristic $p > 0$ and $G$ a finite group such that $p$ divides the order of $G$.
Then the support data $(\Proj \H^*(G; k), V_G)$ is a classifying tensorial support data for $\lmod kG$.
\end{thm}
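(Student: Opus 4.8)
The plan is to recognize the statement as a repackaging, in the language of Section 2, of the Benson--Carlson--Rickard and Benson--Iyengar--Krause classifications, so that the work is to assemble three ingredients. First I would verify that $(\Proj \H^*(G;k), V_G)$ is a tensorial support data for $\lmod kG$. The four axioms of Definition \ref{sd} --- $V_G(0)=\emptyset$, $\Sigma$-invariance, additivity on direct sums, and the inclusion $V_G(M)\subseteq V_G(L)\cup V_G(N)$ for a triangle $L\to M\to N\to\Sigma L$ --- are standard properties of support varieties collected in \cite[Chapter 5]{Ben} and were already recorded in Example \ref{exam}(3); each $V_G(M)$ is closed because it is the zero locus of the annihilator in $\H^*(G;k)$ of $\Ext^*_{kG}(M,M)$. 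The remaining tensoriality condition $V_G(M\otimes_k N)=V_G(M)\cap V_G(N)$ is the tensor product theorem: the inclusion $\subseteq$ is formal, and the reverse inclusion is the Avrunin--Scott theorem (proved by reduction to elementary abelian subgroups via Quillen stratification), with the refinement to the stable category in \cite{BCR}. This part I would simply cite.

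Second I would check that $\Proj \H^*(G;k)$ is a noetherian sober space. Noetherianity is immediate from the Evens--Venkov finite generation theorem: $\H^*(G;k)$ is a finitely generated graded-commutative $k$-algebra, hence a noetherian ring, and therefore its homogeneous spectrum is a noetherian topological space. Soberness holds for the $\Proj$ of any graded-commutative ring: its irreducible closed subsets are exactly the sets $V(\p)$ with $\p$ a homogeneous prime not containing the irrelevant ideal, and each such set has $\p$ as its unique generic point.

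The third and only substantial ingredient is the bijection between radical thick tensor ideals of $\lmod kG$ and specialization-closed subsets of $\Proj \H^*(G;k)$ induced by $f_{V_G}$ and $g_{V_G}$. When $k$ is algebraically closed this is precisely the Benson--Carlson--Rickard classification of thick tensor ideals of $\lmod kG$ in \cite{BCR}; its proof runs a tensor-nilpotence argument in the spirit of Hopkins--Neeman, reducing to elementary abelian $p$-groups by Chouinard's theorem together with Quillen stratification and using Rickard's idempotent modules to realize the relevant Bousfield localizations. For an arbitrary field $k$ one instead invokes the Benson--Iyengar--Krause theorem \cite{BIK} that the big stable module category $\mathsf{StMod}(kG)$ is stratified by the canonical action of $\H^*(G;k)$; restricting their bijection between localizing tensor ideals and subsets of $\Proj\H^*(G;k)$ to compact objects yields the classification of thick tensor ideals of $\lmod kG$ by specialization-closed subsets. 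Finally, since $\Proj\H^*(G;k)$ is noetherian, every specialization-closed subset is Thomason, which is exactly the form of the bijection demanded in the definition of a classifying tensorial support data; hence $(\Proj \H^*(G;k), V_G)$ is classifying.

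The hard part is entirely contained in this third step, and I would not attempt to reprove it: in \cite{BCR} the crux is the tensor-nilpotence theorem for $kG$, and in \cite{BIK} it is the local-cohomology and Koszul-object machinery together with the stratification of $\mathsf{StMod}(kE)$ for an elementary abelian $E$, which ultimately rests on a BGG-type Koszul-duality input from commutative algebra. Everything else is routine verification that these deep results say precisely that $(\Proj \H^*(G;k), V_G)$ is a classifying tensorial support data in the sense of this paper.
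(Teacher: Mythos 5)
Your proposal is correct and takes essentially the same approach as the paper, which simply cites \cite{BCR, BIK} without proof: you unpack the citation into (i) the standard support-variety axioms and the tensor product theorem, (ii) Evens--Venkov noetherianity and soberness of $\Proj$, and (iii) the BCR/BIK classification of thick tensor ideals, each of which is exactly the content those references supply. The only cosmetic slip is your closing remark about Thomason subsets --- the paper's definition of classifying tensorial support data already asks for a bijection with $\spcl(X)$ directly, so the Thomason comparison is not needed; it is harmless since the space is noetherian.
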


Applying Theorem \ref{main2} to this classifying tensorial support data, we obtain the following result:

\begin{thm}\label{cor3}
Let $k$ be a field of characteristic $p$ and $G$ a $p$-group.
Then there is a homeomorphism
$$
\varphi: \Proj \H^*(G; k) \xrightarrow{\cong} \spec \lmod kG
$$
which restricts to a homeomorphism
$
\varphi: \V_G(M) \xrightarrow{\cong} \supp_{\lmod kG} (M)
$
for each $M \in \lmod kG$.
\end{thm}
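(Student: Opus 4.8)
The plan is to apply Theorem \ref{main2} to the support data $(\Proj \H^*(G; k), \V_G)$, so the heart of the matter is to upgrade the classifying \emph{tensorial} support data of Benson--Carlson--Rickard and Benson--Iyengar--Krause to a classifying support data in the sense of Definition \ref{clssp}, i.e. one that classifies \emph{all} thick subcategories of $\lmod kG$, not merely the thick tensor ideals. For this I would invoke Lemma \ref{tensor} with $(X, \sig) = (\Proj \H^*(G; k), \V_G)$ and $\T = \lmod kG$.

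First I would check the hypotheses of Lemma \ref{tensor}. The ring $\H^*(G; k)$ is graded-commutative noetherian by the Evens--Venkov theorem, so $\Proj \H^*(G; k)$ is a noetherian sober space; combined with the cited classification theorem this says that $(\Proj \H^*(G; k), \V_G)$ is a classifying tensorial support data for $\lmod kG$. The tensor triangulated category $(\lmod kG, \otimes_k, k)$ is rigid: the functor $M \otimes_k -$ has right adjoint $\Hom_k(M, -)$, and every finite-dimensional $kG$-module is strongly dualizable via the $k$-linear dual.

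The key step is to verify condition $(4)$ of Lemma \ref{tensor}, namely $\lmod kG = \thick_{\lmod kG}(k)$. This is where the hypothesis that $G$ is a $p$-group (with $p = \operatorname{char} k$) enters: the group algebra $kG$ is then a local artinian ring whose unique simple module is the trivial module $k$. Hence every finitely generated $kG$-module $M$ admits a composition series all of whose factors are isomorphic to $k$, so $M$ is obtained from finitely many copies of $k$ by iterated extensions; in particular $M \in \thick_{\lmod kG}(k)$. Thus condition $(4)$, and therefore condition $(2)$, of Lemma \ref{tensor} holds, i.e. $(\Proj \H^*(G; k), \V_G)$ is a classifying support data for $\lmod kG$.

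Finally I would feed this into Theorem \ref{main2}: it produces a homeomorphism $\varphi : \Proj \H^*(G; k) \xrightarrow{\cong} \spec \lmod kG$ restricting to homeomorphisms $\varphi : \V_G(M) \xrightarrow{\cong} \supp_{\lmod kG}(M)$ for every $M \in \lmod kG$, which is exactly the assertion. I do not anticipate a serious obstacle here; the only points requiring care are the local-ring argument establishing $\lmod kG = \thick_{\lmod kG}(k)$ and making sure the noetherianity of $\Proj \H^*(G; k)$ is in place, so that Lemma \ref{tensor} and Theorem \ref{main2} apply verbatim.
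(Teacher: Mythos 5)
Your proposal is correct and follows essentially the same route as the paper: establish rigidity of $\lmod kG$, use the $p$-group hypothesis to get $\lmod kG = \thick_{\lmod kG}(k)$, pass through Lemma \ref{tensor} to upgrade the classifying tensorial support data of Benson--Carlson--Rickard / Benson--Iyengar--Krause to a classifying support data, and then apply Theorem \ref{main2}. You are merely more explicit than the paper's terse proof about the invocation of Lemma \ref{tensor} and the composition-series argument behind $\lmod kG = \thick_{\lmod kG}(k)$.
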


\begin{proof}
For each $M \in \lmod kG$, the functor $M \otimes_k -: \lmod kG \to \lmod kG$ has a right adjoint $\Hom_k(M, -): \lmod kG \to \lmod kG$ and in addition $\lmod kG$ is rigid.  
Moreover, for a $p$-group $G$, $kG$ has only one simple module $k$.
Therefore, we have $\lmod kG = \thick_{\lmod kG} k$. 
Applying Theorem \ref{main2}, we are done.
\end{proof}

As in the case of perfect derived categories, we have a necessary condition for stable equivalences for group algebras of $p$-groups.

\begin{cor}\label{cor3}
Let $k$ (resp. $l$) be field of characteristic $p$ (resp. $q$), $G$ (resp. $H$) be a finite $p$-group (resp. $q$-group).
If $kG$ and $lH$ are stably equivalent (i.e., $\lmod kG \cong \lmod lH$ as triangulated categories), then $\Proj \H^*(G; k)$ and $\Proj \H^*(H; l)$ are homeomorphic.
\end{cor}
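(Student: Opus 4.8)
The plan is to derive this as a formal consequence of the preceding theorem together with the fact that $\spec(-)$ is an invariant of the triangulated structure alone.

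First I would check that the hypotheses of the preceding theorem (Theorem~\ref{cor3}) are met for every finite $p$-group $G$: the group algebra $kG$ is local, so $k$ is its only simple module and $\lmod kG = \thick_{\lmod kG} k$, and $\lmod kG$ is rigid, since $M \otimes_k -$ has right adjoint $\Hom_k(M,-)$ and every object is strongly dualizable. Hence that theorem yields a homeomorphism $\Proj \H^*(G;k) \xrightarrow{\cong} \spec \lmod kG$, and likewise $\Proj \H^*(H;l) \xrightarrow{\cong} \spec \lmod lH$. (Under the hood this uses that $(\Proj \H^*(G;k), V_G)$, a classifying tensorial support data by the Benson--Carlson--Rickard / Benson--Iyengar--Krause classification, is then a classifying support data in the tensor-free sense of Section~2 by Lemma~\ref{tensor}, and that Theorem~\ref{main2} applies.)

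Next I would use that a triangle equivalence $F \colon \lmod kG \xrightarrow{\cong} \lmod lH$ carries thick subcategories to thick subcategories preserving the lattice operations, hence sends irreducible thick subcategories to irreducible ones and matches the closed subbases $\{\supp_{\lmod kG} M\}$ and $\{\supp_{\lmod lH} N\}$; therefore $F$ induces a homeomorphism $\spec \lmod kG \xrightarrow{\cong} \spec \lmod lH$. This is exactly the remark recorded just before Corollary~\ref{cor}; alternatively one quotes Corollary~\ref{cor} applied to the two classifying support data transported across $F$. Composing the three homeomorphisms
$$
\Proj \H^*(G;k) \;\xrightarrow{\cong}\; \spec \lmod kG \;\xrightarrow{\cong}\; \spec \lmod lH \;\xrightarrow{\cong}\; \Proj \H^*(H;l)
$$
gives the assertion.

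There is no genuine obstacle internal to this corollary: all the substance sits in the quoted classification of thick tensor ideals and in the Section~2 machinery (Theorem~\ref{main2}, Lemma~\ref{tensor}, Corollary~\ref{cor}) that upgrades a tensorial classification to a tensor-free one. The only things to verify by hand are the routine facts that $\lmod kG$ is rigid and is singly generated as a thick subcategory by $k$ when $G$ is a $p$-group, which are standard. As a byproduct, any homeomorphism invariant of $\Proj \H^*(G;k)$ — in particular its Krull dimension, which equals the $p$-rank of $G$ — is preserved under stable equivalence.
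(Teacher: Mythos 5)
Your proof is correct and follows exactly the paper's route: apply the preceding theorem to get homeomorphisms $\Proj \H^*(G;k) \cong \spec \lmod kG$ and $\Proj \H^*(H;l) \cong \spec \lmod lH$, then use that $\spec(-)$ is an invariant of the bare triangulated structure (equivalently, Corollary~\ref{cor}) to transport along the stable equivalence. The paper leaves this corollary as an immediate consequence in the same spirit, so there is nothing to add.
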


Recall that the {\it $p$-rank} of a finite group $G$ is by definition the maximal rank of elementary abelian $p$-subgroup:
$$
r_p(G):= \sup \{r \mid (\Z/p)^r \subseteq G \}.
$$
Quillen \cite{Qui} showed that the dimension of the cohomology ring $H^*(G; k)$ is equal to the $p$-rank of $G$.
Thus, the $p$-rank is an invariant of stable equivalences:

\begin{cor}\label{prk}
Let $k, l, G, H$ be as in Theorem \ref{cor3}.
Assume that there is a stable equivalence between $kG$ and $lH$, then $r_p(G)=r_q(H)$.
\end{cor}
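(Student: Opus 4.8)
The plan is to read the statement off from the topological reconstruction already obtained in Corollary \ref{cor3}, together with two classical inputs: the standard relation between the Krull dimension of a graded ring and that of its $\Proj$, and Quillen's computation of $\dim \H^*(G;k)$.

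First I would apply Corollary \ref{cor3}: a stable equivalence $\lmod kG \cong \lmod lH$ produces a homeomorphism $\Proj \H^*(G;k) \cong \Proj \H^*(H;l)$. The Krull dimension of a topological space — the supremum of lengths of chains of irreducible closed subsets — is visibly a homeomorphism invariant, so
$$
\dim \Proj \H^*(G;k) = \dim \Proj \H^*(H;l).
$$
Next I would convert this into an equality of dimensions of the cohomology rings themselves. For any nonnegatively graded noetherian ring $A = \bigoplus_{n \ge 0} A_n$ with $A_0 = k$ a field — which is the case for $A = \H^*(G;k)$ by its construction — every homogeneous prime is contained in the irrelevant ideal $A_+ = \bigoplus_{n \ge 1} A_n$, and $A_+$ is itself prime since $A/A_+ \cong k$. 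Hence a maximal chain of homogeneous primes necessarily terminates at $A_+$, and deleting that last term gives a maximal chain in $\Proj A$; since the Krull dimension of a graded ring is computed by chains of homogeneous primes, this yields $\dim A = \dim \Proj A + 1$ whenever $\Proj A \neq \emptyset$. Applying this to both $\H^*(G;k)$ and $\H^*(H;l)$ gives $\dim \H^*(G;k) = \dim \H^*(H;l)$. Finally, Quillen's theorem \cite{Qui} identifies $\dim \H^*(G;k) = r_p(G)$ and $\dim \H^*(H;l) = r_q(H)$, whence $r_p(G) = r_q(H)$.

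I do not expect a serious obstacle here: the corollary is essentially immediate once Corollary \ref{cor3} and Quillen's theorem are granted. The only bookkeeping point is the borderline case $\Proj \H^*(G;k) = \emptyset$, which by the argument above occurs exactly when $\H^*(G;k)$ has Krull dimension $0$, i.e.\ (by Quillen) when $r_p(G) = 0$, i.e.\ when the $p$-group $G$ is trivial; in that case $kG$ is semisimple, so $\lmod kG = 0$, the stable equivalence forces $\lmod lH = 0$, hence $lH$ is semisimple, $H$ is trivial, and $r_p(G) = 0 = r_q(H)$. In all remaining cases $r_p(G), r_q(H) \ge 1$, both $\Proj$'s are nonempty, and the dimension-shift argument applies verbatim.
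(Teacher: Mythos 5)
Your argument is correct and follows exactly the route the paper intends: apply Corollary~\ref{cor3} to get a homeomorphism of the two $\Proj$'s, pass to Krull dimensions, and invoke Quillen's theorem. The paper states the corollary without a written proof (the only justification is the preceding citation of Quillen), so your only additions are spelling out the elementary dimension shift $\dim A = \dim\Proj A + 1$ and the trivial-group edge case — both harmless and in the spirit of what the paper leaves implicit.
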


Other invariants of stable equivalences are studied by Linckelmann \cite{Lin}.

\begin{rem}
Let $G$ and $H$ be a $p$-group and $k$ a field of characteristic $p$.
\begin{enumerate}[\rm(1)]
\item
By \cite[Corollary 3.6]{Lin}, if there exists a stable equivalence between $kG$ and $kH$, then $|G| = |H|$.
\item
By \cite[Corollary 3.2]{Lin}, if there exists a stable equivalence of Morita type between $kG$ and $kH$, then $G \cong H$.
\end{enumerate}
\end{rem}

We will end this section by proving the following corollary which is a combination of the previous corollary and remark. 

\begin{cor}
Let $k$ be a field of characteristic $p$, G and $H$ $p$-groups. 
Suppose that there is a stable equivalence between $kG$ and $lH$.
Then $G$ is an elementary abelian $p$-group if and only if $H$ is an elementary abelian $p$-group.
This is the case, $G$ and $H$ are isomorphic.
\end{cor}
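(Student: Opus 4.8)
The plan is to deduce this from Corollary \ref{prk} together with Linckelmann's order formula recalled in the Remark above, using only the following elementary fact about finite $p$-groups: a finite $p$-group $P$ is elementary abelian if and only if $|P| = p^{\,r_p(P)}$, and in that case $P \cong (\Z/p)^{r_p(P)}$.

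First I would prove this fact. For any finite $p$-group $P$ of order $p^n$ one has $r_p(P) \le n$, since an elementary abelian subgroup of rank $r$ has order $p^r \le |P|$. If the equality $r_p(P) = n$ holds, choose a subgroup $E \cong (\Z/p)^n$ of $P$; then $|E| = p^n = |P|$, so $E = P$ and $P$ is elementary abelian. Conversely $(\Z/p)^n$ has order $p^n$ and $p$-rank $n$. This also shows that an elementary abelian $p$-group is determined up to isomorphism by its $p$-rank.

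Now suppose there is a stable equivalence between $kG$ and $kH$. By Corollary \ref{prk} we get $r_p(G) = r_p(H)$, and by part (1) of the Remark following it (that is, \cite[Corollary 3.6]{Lin}) we get $|G| = |H|$; write $|G| = |H| = p^n$. If $G$ is elementary abelian, then $n = r_p(G) = r_p(H)$, so $|H| = p^{\,r_p(H)}$, and the fact above shows $H$ is elementary abelian; by symmetry, $H$ elementary abelian implies $G$ elementary abelian. Finally, if both $G$ and $H$ are elementary abelian, then $G \cong (\Z/p)^{r_p(G)} = (\Z/p)^{r_p(H)} \cong H$.

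All the genuine content sits in Corollary \ref{prk} (and hence in Theorem \ref{main2} and the Benson--Carlson--Rickard classification), so there is no real obstacle here; the only point requiring care is that $G$ and $H$ must be considered over one and the same field $k$ of characteristic $p$ in order for \cite[Corollary 3.6]{Lin} to apply, which is why the statement is phrased for a single field.
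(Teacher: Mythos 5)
Your proof is correct and follows essentially the same route as the paper: combine Corollary \ref{prk} (equality of $p$-ranks) with Linckelmann's \cite[Corollary 3.6]{Lin} (equality of orders) and the elementary characterization of elementary abelian $p$-groups via their $p$-rank. Note that your version also corrects a typo in the paper's proof, which writes the characterization as $|G| = p \cdot r_p(G)$ where it should read $|G| = p^{\,r_p(G)}$, exactly as you state it.
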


\begin{proof}
Note that for a $p$-group $G$, it is elementary abelian $p$-group if and only if $|G| = p \cdot r_p(G)$.
By Corollary \ref{prk}, one has $r_p(G) = r_p(H)$ and $|G| = |H|$ by \cite[Corollary 3.6]{Lin}.
Thus the statement holds.
\end{proof}


\section{A necessary condition for singular equivalences}

Recall that commutative noetherian rings $R$ and $S$ are said to be {\it singularly equivalent} if their singularity categories are equivalent as triangulated categories.
The only known examples of singular equivalences are the following:

\begin{ex}
\begin{enumerate}[\rm(1)]
\item
If $R \cong S$, then $\ds(R) \cong \ds(S)$.
\item
If $R$ and $S$ are regular, then $\ds(R) \cong 0 \cong \ds(S)$.
\item (Kn\"{o}rrer's periodicity \cite[Chapter 12]{Yos})
Let $k$ be an algebraically closed field of characteristic $0$.
Set $R:=k[[x_0, x_1, ..., x_{d}]]/(f)$ and $S:=k[[x_0, x_1, ..., x_{d}, u, v]]/(f+uv)$.
Then $\ds(R) \cong \ds(S)$.
\end{enumerate}
\end{ex}

\begin{rem}
All of these singular equivalences, the singular loci $\sing R$ and $\sing S$ are homeomorphic.
In fact, the cases $(1)$ and $(2)$ are clear.
Consider the case of $R:=k[[x_0, x_1, ..., x_{d}]]/(f)$ and $S:=k[[x_0, x_1, ..., x_d, u, v]]/(f+uv)$.
Then
\begin{align*}
\sing S &= \V(\partial f/\partial  x_0, \ldots \partial  f/\partial  x_{d}, u, v)  \\
&\cong \spec(S/ (\partial  f/\partial  x_0, \ldots, \partial  f/\partial  x_{d}, u, v)) \\
&\cong \spec(k[[x_0, x_1, ..., x_{d}, u, v]]/(f+uv, \partial  f/\partial  x_0, \ldots, \partial  f/\partial  x_{d}, u, v)) \\
&\cong \spec(k[[x_0, x_1, ..., x_{d}]]/(f, \partial  f/\partial  x_0, \ldots, \partial  f \partial  x_{d}) \\
&\cong \V(\partial  f/\partial x_0, \ldots \partial  f/\partial  x_{d}) = \sing R.
\end{align*}
Here, the first and the last equalities are known as the Jacobian criterion.
\end{rem}

Let me give some definitions appearing in the statement of the main theorem of this section.

\begin{dfn}
Let $(R, \m, k)$ be a commutative noetherian local ring.
\begin{enumerate}
\item
We say that an ideal $I$ of $R$ is {\it quasi-decomposable} if there is an $R$-regular sequence $\underline{x}$ of $I$ such that $I/(\underline{x})$ is decomposable as an $R$-module.
\item
A local ring $R$ is said to be {\it complete intersection} if there is a regular local ring $S$ and an $S$-regular sequence $\underline{x}$ such that the completion $\hat{R}$ of $R$ is isomorphic to $S/(\underline{x})$.
We say that $R$ is a {\it hypersurface} if we can take $\underline{x}$ to be an $S$-regular sequence of length $1$.
\item
A local ring $R$ is said to be {\it locally a hypersurface on the punctured spectrum} if $R_\p$ is a hypersurface for every non-maximal prime ideal $\p$.
\end{enumerate}
\end{dfn}

The following theorem is the main result of this section.

\begin{thm}\label{mainds}
	Let $R$ and $S$ be commutative noetherian local rings that are locally hypersurfaces on the punctured spectra.
	Assume that $R$ and $S$ are either
	\begin{enumerate}[\rm(a)]
		\item
		Gorenstein rings, or
		\item 
		Cohen-Macaulay rings with quasi-decomposable maximal ideal.
	\end{enumerate}
	If $R$ and $S$ are singularly equivalent, then $\sing R$ and $\sing S$ are homeomorphic.
\end{thm}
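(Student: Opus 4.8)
The plan is to deduce everything from Corollary \ref{cor}: it suffices to prove that, under the hypotheses, the support data $(\sing R,\ssupp_R)$ of Example \ref{exam}(1) is a \emph{classifying} support data for $\ds(R)$, and likewise $(\sing S,\ssupp_S)$ for $\ds(S)$. Granting this, a triangle equivalence $\ds(R)\xrightarrow{\cong}\ds(S)$ together with Corollary \ref{cor} forces a homeomorphism $\sing R\cong\sing S$, which is the assertion. We may assume $R$ and $S$ are singular, since otherwise the singularity categories and singular loci in sight are zero and empty. The topological requirement is then the easy part: because $R$ is local and locally a hypersurface on the punctured spectrum, $\sing R$ is a closed subset of $\spec R$ (this is the closedness already invoked in Example \ref{exam}(1)), hence, as a closed subspace of the spectrum of a commutative noetherian ring, a noetherian sober space. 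I note in passing that condition $(1')$ of Remark \ref{1rem} becomes automatic here, since a nonzero object of $\ds(R)=\ds(R_\m)$ has $\m\in\ssupp_R(-)$.

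What remains is the bijectivity of $f_{\ssupp_R}$ and $g_{\ssupp_R}$ between $\th(\ds(R))$ and $\spcl(\sing R)$, that is, a Thomason-style classification of thick subcategories of $\ds(R)$ by specialization-closed subsets of $\sing R$. I would split this into \textbf{(i)} supports detect thick subcategories, $\X=g_{\ssupp_R}f_{\ssupp_R}(\X)$ for every $\X\in\th(\ds(R))$, and \textbf{(ii)} realization — by the argument of Lemma \ref{tprin} it is enough to exhibit, for each $\p\in\sing R$, an object $M$ with $\ssupp_R(M)=\overline{\{\p\}}\cap\sing R$. In case (a), Buchweitz's equivalence $\ds(R)\cong\lcm(R)$ lets one work in $\lcm(R)$, and I would run a noetherian induction on $\dim\ssupp_R(M)$: localizing at a non-maximal prime $\p$ reduces statements to the hypersurface local ring $R_\p$, where the matrix-factorization ($2$-periodic) description of $\ds(R_\p)$ — equivalently, the $\ltensor_{R_\p}$-action of $\dpf(R_\p)$ on $\ds(R_\p)$ — supplies enough control on $\thick_{R_\p}$ both to carry out the d\'evissage and to build modules with prescribed nonfree locus, the closed point being the base of the induction. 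This is precisely where the hypothesis ``locally a hypersurface on the punctured spectrum'' is essential; the classification needed over $\lcm$ of a Gorenstein ring, respectively over a hypersurface, is in the spirit of the theorems of Takahashi and of Stevenson, which I would invoke or adapt.

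For case (b), $R$ is merely Cohen--Macaulay, so $\ds(R)$ need not be a stable module category and case (a) does not apply verbatim. Here the role of the quasi-decomposable maximal ideal is to reduce, via the chosen $R$-regular sequence $\underline{x}\subseteq\m$ with $\m/(\underline{x})$ decomposable, to a situation in which the singularity category is small — informally, a ring with decomposable maximal ideal behaves like a fiber product and its singularity category is generated, as a thick subcategory, by a short explicit list of objects built from the residue field and the summands of $\m/(\underline{x})$ — so that the desired bijection with specialization-closed subsets can be pinned down essentially by direct computation. Matching up the supports on the two sides of this reduction, so that the classification genuinely transfers back to $(\sing R,\ssupp_R)$, is an additional delicate point.

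The main obstacle is this classification of $\th(\ds(R))$ by $\spcl(\sing R)$ itself — above all direction \textbf{(i)}, that supports separate thick subcategories, together with the non-Gorenstein case (b), where one must extract enough of the structure of $\ds(R)$ from the quasi-decomposable maximal ideal both to realize all specialization-closed subsets of $\sing R$ and to prove the separation property. Once the classification is available for $R$ and for $S$, the theorem follows formally from Corollary \ref{cor}.
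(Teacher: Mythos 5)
Your plan works, and is essentially the paper's plan, for case (b): there you just cite Nasseh--Takahashi \cite[Theorem~B]{NT} for the fact that $(\sing R,\ssupp_R)$ is a classifying support data for $\ds(R)$, and Corollary~\ref{cor} finishes. (Your heuristic about decomposable maximal ideals is not a proof, but the reference closes it.)

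In case (a), however, there is a genuine gap, and it is not reparable along the lines you sketch. You assert that $(\sing R,\ssupp_R)$ is a classifying support data for the \emph{whole} category $\ds(R)\cong\lcm(R)$, and plan to prove it by noetherian induction on $\dim\ssupp_R$, localizing at non-maximal primes where $R_\p$ is a hypersurface. This cannot succeed: the bijection $\th(\lcm R)\leftrightarrow\spcl(\sing R)$ is simply false for a Gorenstein local ring that is not itself a hypersurface. Take any non-hypersurface Artinian complete intersection, e.g.\ $R=k[x,y]/(x^2,y^2)$. It is Gorenstein, trivially a hypersurface on the (empty) punctured spectrum, and $\sing R=\{\m\}$, so $\spcl(\sing R)$ has exactly two elements; yet thick subcategories of $\lcm(R)$ are parameterized by specialization-closed subsets of a positive-dimensional support variety and so form an infinite lattice. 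The structural point your localization argument misses is that the closed point $\m$ is precisely where the hypersurface hypothesis fails, and it is at $\m$ that the lattice of thick subcategories can be strictly richer than $\spcl(\sing R)$ sees. Step~(i) of your plan, $\X=g_{\ssupp_R}f_{\ssupp_R}(\X)$, already fails there.

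What the paper actually uses, and what you would need, is Takahashi's theorem \cite[Theorem~6.7]{Tak10}, which classifies only those thick subcategories of $\lcm(R)$ \emph{containing the residue field} (equivalently, containing $\lcm_0(R)$, the subcategory of modules free on the punctured spectrum). The correspondence is then with $\spcl(\sing R\setminus\{\m\})$, not $\spcl(\sing R)$. To make this usable one must show that the distinguished subcategory $\lcm_0(R)$ is preserved by any triangle equivalence. The paper does this via a categorical characterization: $M\in\lcm_0(R)$ if and only if $\lEnd_R(M)$ is artinian (its Lemma preceding Proposition~\ref{lf}). This yields Proposition~\ref{lf}: a triangle equivalence $\lcm(R)\cong\lcm(S)$ restricts to $\lcm_0(R)\cong\lcm_0(S)$ and descends to $\lcm(R)/\lcm_0(R)\cong\lcm(S)/\lcm_0(S)$. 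Applying Theorem~\ref{main2} to the classifying support data $(\sing R\setminus\{\m\},\ssupp_R\setminus\{\m\})$ for the quotient then gives a homeomorphism $\sing R\setminus\{\m\}\cong\sing S\setminus\{\n\}$, and one re-attaches the unique closed points to conclude $\sing R\cong\sing S$. The ``containing the residue field'' issue, and the need to show it is categorically intrinsic, is the key idea your proposal is missing.
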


For a ring $R$ satisfying the condition (b) in Theorem \ref{mainds}, Nasseh-Takahashi \cite[Theorem B]{NT} shows that $(\sing R, \ssupp_R)$ is a classifying support data for $\ds(R)$.
Therefore, the statement of Theorem \ref{mainds} follows from Corollary \ref{cor}.
Therefore, the problem is the case of (a).

For a ring $R$ satisfying the condition (a) in Theorem \ref{mainds}, Takahashi \cite{Tak10} classified thick subcategories of $\ds(R)$ containing the residue field $k$ of $R$ by using the singular locus $\sing R$ and the singular support $\ssupp_R$.
We would like to apply Corollary \ref{cor} also for this case.
The problem is that whether the condition ``containing the residue field $k$" is preserved by singular equivalences.
As we will show later, this condition is actually preserved by singular equivalences for Gorenstein local rings.

For a while, we consider Gorenstein local rings and the stable category $\lcm(R)$ instead of $\ds(R)$.
The following lemma gives a categorical characterization of thick subcategory generated by the residue field.

\begin{lem}
Let $(R, \m, k)$ be a $d$-dimensional Gorenstein local ring.
Then the following are equivalent for $M \in \lcm(R)$.
\begin{enumerate}[\rm(1)]
\item
$\lEnd_{R}(M)$ is an artinian ring.
\item 
$M$ is locally free on the punctured spectrum (i.e., $M_\p \cong 0$ for $\forall \p \neq \m$).
\item
$M \in \thick_{\lcm(R)} \syz^d k $.
\end{enumerate} 
\end{lem}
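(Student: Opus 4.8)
The plan is to prove $(1)\Leftrightarrow(2)$ and $(2)\Leftrightarrow(3)$ separately, using throughout Buchweitz's triangle equivalence $F\colon\lcm(R)\xrightarrow{\cong}\ds(R)$ \cite{Buc} together with the isomorphisms $\syz^i k\cong\Sigma^{-i}k$ in $\ds(R)$. These identify condition $(3)$ with the statement that $M$, regarded as an object of $\ds(R)$ via $F$, lies in $\thick_{\ds(R)}(k)$, and I will pass between $\lcm(R)$ and $\ds(R)$ without further comment.

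The implications $(1)\Leftrightarrow(2)$ and $(3)\Rightarrow(2)$ should be quick. For the first, since $M$ is finitely generated over the noetherian ring $R$, the stable endomorphism ring $\lEnd_R(M)$ is a module-finite $R$-algebra, hence it is an artinian ring if and only if it has finite length as an $R$-module, i.e. if and only if $\supp_R\lEnd_R(M)\subseteq\{\m\}$. Because $M$ is finitely presented, localization commutes with $\lHom$ --- the submodule of $\Hom_R(M,M)$ of homomorphisms factoring through a free module localizes correctly, by a clearing-denominators argument --- so $(\lEnd_R M)_\p\cong\lEnd_{R_\p}(M_\p)$, and the latter vanishes exactly when $\id_{M_\p}$ factors through a free module, that is, when $M_\p$ is free. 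Thus $(1)$ holds iff the nonfree locus of $M$ is contained in $\{\m\}$, which is $(2)$. For $(3)\Rightarrow(2)$, the full subcategory of $\lcm(R)$ of modules locally free on the punctured spectrum is thick (each localization $\lcm(R)\to\lcm(R_\p)$ is exact) and contains $\syz^d k$ since $k_\p\cong 0$ for $\p\ne\m$; hence it contains $\thick_{\lcm(R)}\syz^d k$.

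The real work is $(2)\Rightarrow(3)$, which I would prove by induction on $d=\dim R$. If $d=0$, then $R$ is artinian and every object of $\ds(R)=\lcm(R)$ is a finite-length module, hence built from $k$ by the triangles arising from a composition series, so it lies in $\thick_{\ds(R)}(k)$. Let $d\ge 1$. By the already-proved $(1)\Leftrightarrow(2)$, $\lEnd_R(M)$ has finite length, so $\m^N\lEnd_R(M)=0$ for some $N\gg 0$; in particular $x^N\colon M\to M$ is the zero morphism in $\ds(R)$ for every $x\in\m$. Choose, by prime avoidance, an $x\in\m$ that is a nonzerodivisor on both $R$ and $M$ (possible since both have depth $d\ge 1$, so $\m$ is not an associated prime of $R$ or of $M$). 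Then $\bar R:=R/(x)$ is Gorenstein local of dimension $d-1$ and $\bar M:=M/xM$ is a maximal Cohen-Macaulay $\bar R$-module that is locally free on the punctured spectrum of $\bar R$; by the induction hypothesis $\bar M\in\thick_{\ds(\bar R)}(\bar k)$. Since $\bar R$ has finite projective dimension over $R$, restriction of scalars carries $\dpf(\bar R)$ into $\dpf(R)$ and hence induces a triangle functor $\res\colon\ds(\bar R)\to\ds(R)$ with $\res(\bar k)=k$, so $M/xM=\res(\bar M)\in\thick_{\ds(R)}(k)$. Finally, because $x^N=0$ in $\ds(R)$ we get $\mathrm{cone}(x^N\colon M\to M)\cong M\oplus\Sigma M$, while repeated application of the octahedral axiom to the factorizations $x^n=x\cdot x^{n-1}$ gives $\mathrm{cone}(x^N\colon M\to M)\in\thick_{\ds(R)}(\mathrm{cone}(x\colon M\to M))=\thick_{\ds(R)}(M/xM)$. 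Hence $M\oplus\Sigma M\in\thick_{\ds(R)}(M/xM)\subseteq\thick_{\ds(R)}(k)$, and passing to a direct summand yields $M\in\thick_{\ds(R)}(k)$, which is $(3)$.

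The step I expect to be the main obstacle is precisely this last passage from $M/xM\in\thick_{\ds(R)}(k)$ back to $M\in\thick_{\ds(R)}(k)$: the triangle $M\xrightarrow{x}M\to M/xM\to\Sigma M$ on its own produces only circular containments, and it is the finiteness of $\lEnd_R(M)$ --- equivalently, the nilpotence of multiplication by $x$ on $M$ inside $\ds(R)$ --- that breaks the circularity and lets the cone of $x^N$ split off a copy of $M$. The remaining points (that $\res$ is well defined on singularity categories, and that $\bar M$ inherits the punctured-spectrum hypothesis over $\bar R$) are routine verifications.
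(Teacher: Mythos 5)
Your proof is correct, and it takes a genuinely different route from the paper's for the only substantial implication. The paper disposes of $(2)\Rightarrow(3)$ by a single citation to Takahashi's classification \cite[Corollary 2.6]{Tak10}, and proves $(1)\Rightarrow(2)$ via Eisenbud's subring theorem \cite[Theorem 1]{Eis}: if $\lEnd_R(M)$ is artinian then the image of $R\to\lEnd_R(M)$ is artinian, so the kernel is $\m$-primary and localization at a non-maximal prime kills $\lEnd$. Your $(1)\Leftrightarrow(2)$ reaches the same conclusion by a slightly different accounting: you rest on the assertion that a module-finite algebra $A$ over noetherian local $(R,\m)$ is artinian as a ring iff it has finite length over $R$. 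This is true, but the nontrivial direction deserves a sentence --- the chain of two-sided ideals $\m^nA$ stabilizes by DCC, and Nakayama then gives $\m^NA=0$ --- and this Nakayama step is exactly the content extracted from Eisenbud in the paper's setting, so you have silently reproved the special case you need. Where you add real value is $(2)\Rightarrow(3)$: instead of outsourcing to Takahashi you give a self-contained induction on $\dim R$, cutting by a simultaneous $R$- and $M$-nonzerodivisor $x$, pushing forward along the restriction functor $\ds(R/(x))\to\ds(R)$, and then --- the key trick, which you correctly identify as the crux --- exploiting the nilpotence of $\m$ on $\lEnd_R(M)$ (available by the already-proved $(1)\Leftrightarrow(2)$) to realize $M\oplus\Sigma M$ as $\operatorname{cone}(x^N)$, an iterated extension of $M/xM$. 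This breaks the circularity that the Koszul triangle $M\xrightarrow{x}M\to M/xM$ alone cannot, and it is precisely the step the paper hides inside the citation. Your version is longer but transparent and needs no input beyond Buchweitz's equivalence; the paper's is shorter but opaque without chasing the reference.
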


\begin{proof}
Implications $(3) \Rightarrow (2) \Rightarrow (1)$ is clear and the implication $(2) \Rightarrow (3)$ is shown in \cite[Corollary 2.6]{Tak10}.

The only thing remained to prove is the implication $(1) \Rightarrow (2)$. 
Assume that $\lEnd_R(M)$ is an artinian ring.
Denote by $\mathfrak{a}$ the kernel of the natural map $R \to \lEnd_R(M)$.
Then by \cite[Theorem 1]{Eis}, $R/ \mathfrak{a}$ is an artinian ring and hence $\mathfrak{a}$ is $\m$-primary. 
Therefore, for any non-maximal prime ideal $\p$, the natural map $R_\p \to \lEnd_{R_\p}(M_\p)$ is zero and hence $M_\p \cong 0$.
\end{proof}

Denote by $\lcm_0(R)$ the thick subcategory of $\lcm(R)$ consisting of maximal Cohen-Macaulay modules which are locally free on the punctured spectrum.
As we have shown in the previous lemma, $\lcm_0(R) = \thick_{\lcm(R)} k$ and whose objects are categorically determined objects.
Therefore, we have the following result. 

\begin{prop}\label{lf}
Let $R$ and $S$ be Gorenstein local rings.
If there is a triangle equivalence $\lcm(R) \cong \lcm(S)$, then it induces triangle equivalences
\begin{align*}
\lcm_0(R) &\cong \lcm_0(S), and\\
\lcm(R)/\lcm_0(R) &\cong \lcm(S)/\lcm_0(S).
\end{align*}
\end{prop}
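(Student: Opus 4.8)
The plan is to read off $\lcm_0$ from the triangulated structure alone, using the intrinsic characterization established in the preceding lemma, and then to pass the equivalence down to the Verdier quotients. Recall that by that lemma, for $M\in\lcm(R)$ one has $M\in\lcm_0(R)$ if and only if the endomorphism ring $\lEnd_R(M)=\End_{\lcm(R)}(M)$ is artinian; the decisive point is that this last condition involves only the ring $\End_{\lcm(R)}(M)$, which depends solely on the triangulated category $\lcm(R)$.

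First I would fix a triangle equivalence $F\colon\lcm(R)\xrightarrow{\cong}\lcm(S)$ together with a chosen quasi-inverse $G$. For every object $M$ of $\lcm(R)$, the functor $F$ induces a ring isomorphism $\End_{\lcm(R)}(M)\cong\End_{\lcm(S)}(F(M))$, so one side is artinian precisely when the other is; by the lemma this means $M\in\lcm_0(R)\iff F(M)\in\lcm_0(S)$, and symmetrically $N\in\lcm_0(S)\iff G(N)\in\lcm_0(R)$. Hence $F$ maps $\lcm_0(R)$ into $\lcm_0(S)$ and $G$ maps $\lcm_0(S)$ into $\lcm_0(R)$; as $F$ is fully faithful and its essential image meets $\lcm_0(S)$ in all of $\lcm_0(S)$, the restriction $F|_{\lcm_0(R)}\colon\lcm_0(R)\to\lcm_0(S)$ is a triangle equivalence, with quasi-inverse $G|_{\lcm_0(S)}$. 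This yields the first displayed equivalence.

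For the second, I would appeal to the universal property of the Verdier quotient. Since $\lcm_0(R)$ and $\lcm_0(S)$ are thick and $F$ carries $\lcm_0(R)$ bijectively (up to isomorphism) onto $\lcm_0(S)$, the composite $\lcm(R)\xrightarrow{F}\lcm(S)\to\lcm(S)/\lcm_0(S)$ annihilates $\lcm_0(R)$ and therefore factors uniquely through a triangle functor $\bar F\colon\lcm(R)/\lcm_0(R)\to\lcm(S)/\lcm_0(S)$; likewise $G$ induces $\bar G$ in the opposite direction. The natural isomorphisms $GF\cong\mathrm{id}$ and $FG\cong\mathrm{id}$ descend to natural isomorphisms $\bar G\bar F\cong\mathrm{id}$ and $\bar F\bar G\cong\mathrm{id}$, so $\bar F$ is a triangle equivalence.

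I do not expect a genuine obstacle here: the essential content — recognizing $\lcm_0(R)$ inside $\lcm(R)$ purely in terms of endomorphism rings — is exactly the preceding lemma, whose only nontrivial ingredient is Eisenbud's theorem on endomorphism rings, and everything above is formal. The one point requiring a little care is that a triangle equivalence need not respect an arbitrary thick subcategory; here it does, precisely because $\lcm_0$ is cut out by an isomorphism-invariant property of objects, so no passage to a thick closure is needed when descending to the quotients.
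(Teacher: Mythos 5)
Your proof is correct and is essentially the argument the paper has in mind: the paper's own justification is the single remark that $\lcm_0(R)=\thick_{\lcm(R)}\Omega^d k$ consists of "categorically determined objects" (by the preceding lemma's characterization via artinian stable endomorphism rings), and then implicitly invokes the restriction-and-Verdier-quotient formalism that you spell out. Your elaboration — transporting the artinian-endomorphism-ring criterion along $F$, restricting $F$ to an equivalence $\lcm_0(R)\cong\lcm_0(S)$, and descending via the universal property of the Verdier quotient — is exactly the intended proof.
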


Gathering \cite[Theorem 6.7]{Tak10}, \cite[Theorem B]{NT} and Theorem \ref{main2}, we obtain the following proposition.

\begin{thm}\label{mainspc}
Let $R$ be a noetherian local ring.
\begin{enumerate}[\rm(1)]
\item
If $R$ satisfies the condition $(a)$ in Theorem \ref{mainds}, then there is a homeomorphism
$$\varphi:\sing R \setminus \{\m\} \xrightarrow{\cong} \spec \lcm(R)/\lcm_0(R).$$
\item 
If $R$ satisfies the condition $(b)$ in Theorem \ref{mainds} or is hypersurface, then there is a homeomorphism $$\varphi:\sing R \xrightarrow{\cong} \spec \ds(R)$$ which restricts to a homeomorphism 
$
\varphi:\ssupp_R(M) \xrightarrow{\cong} \supp_{\ds(R)}(M)
$
for each $M \in \ds(R)$.
\end{enumerate}
\end{thm}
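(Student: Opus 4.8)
The plan is to exhibit, in each case, a classifying support data on the relevant quotient category and then invoke Theorem \ref{main2}. For part (2), when $R$ satisfies condition (b), Nasseh--Takahashi's \cite[Theorem B]{NT} already asserts that $(\sing R, \ssupp_R)$ is a classifying support data for $\ds(R)$ (the topological hypotheses, noetherian and sober, hold because $\sing R$ is a closed subscheme of $\spec R$), so Theorem \ref{main2} yields the homeomorphism $\varphi\colon \sing R \xrightarrow{\cong} \spec \ds(R)$ together with its restrictions to supports. When $R$ is a hypersurface, every localization $R_\p$ is again a hypersurface, so $R$ is in particular locally a hypersurface on the punctured spectrum, and moreover $\ds(R) = \thick_{\ds(R)} k$ by \cite[Theorem 6.7]{Tak10} (a hypersurface has residue field generating the whole singularity category, since there are no proper thick subcategories issues at the level of the whole space); hence Takahashi's classification of thick subcategories containing $k$ \cite[Theorem 6.7]{Tak10} becomes a classification of \emph{all} thick subcategories of $\ds(R)$ in terms of specialization-closed subsets of $\sing R$, i.e.\ $(\sing R, \ssupp_R)$ is a classifying support data, and again Theorem \ref{main2} applies.

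For part (1), the point is to pass to the Verdier quotient $\lcm(R)/\lcm_0(R) \cong \ds(R)/\ds_0(R)$ and identify its classifying support data with the punctured singular locus. Takahashi \cite[Theorem 6.7]{Tak10} classifies thick subcategories of $\ds(R) \cong \lcm(R)$ \emph{containing} $\syz^d k$, equivalently containing $\lcm_0(R)$, via specialization-closed subsets of $\sing R$; by the standard bijection between thick subcategories of a Verdier quotient $\mathcal{T}/\mathcal{S}$ and thick subcategories of $\mathcal{T}$ containing $\mathcal{S}$, this transports to a bijection between $\th(\lcm(R)/\lcm_0(R))$ and specialization-closed subsets of $\sing R$ that contain $\overline{\{\m\}} = \ssupp_R(k) $'s image appropriately --- more precisely, since $\lcm_0(R)$ corresponds under $f_{\ssupp}$ to the single closed point $\{\m\}$, the quotient's thick subcategories correspond to specialization-closed subsets of $\sing R \setminus \{\m\}$. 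I would define $\bar\sigma$ on $\lcm(R)/\lcm_0(R)$ by $\bar\sigma(M) = \ssupp_R(M) \setminus \{\m\}$ (well-defined because objects of $\lcm_0(R)$ have singular support contained in $\{\m\}$), check the four axioms of a support data directly from those for $\ssupp_R$, verify that $\sing R \setminus \{\m\}$ is noetherian and sober (it is an open subspace of the noetherian sober space $\sing R$, hence still noetherian and sober), and confirm via the Verdier-quotient correspondence plus Takahashi's theorem that $f_{\bar\sigma}, g_{\bar\sigma}$ are mutually inverse bijections onto $\spcl(\sing R \setminus \{\m\})$. Then Theorem \ref{main2} delivers $\varphi\colon \sing R \setminus \{\m\} \xrightarrow{\cong} \spec \lcm(R)/\lcm_0(R)$.

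The main obstacle I anticipate is the bookkeeping around the Verdier quotient in part (1): one must be careful that Takahashi's classification is stated for thick subcategories containing $k$ (or $\syz^d k$) rather than for all thick subcategories, and that the correspondence $\X \mapsto \X/\lcm_0(R)$ between such subcategories and thick subcategories of the quotient is compatible with the support assignments $\ssupp_R$ and $\bar\sigma$ --- in particular that removing the closed point $\{\m\}$ on the space side corresponds exactly to quotienting by $\lcm_0(R)$ on the category side, using that $\lcm_0(R) = \thick_{\lcm(R)} \syz^d k$ is precisely $g_{\ssupp}(\{\m\})$ by the Lemma preceding Proposition \ref{lf}. Once that compatibility is pinned down, the remaining verifications (support-data axioms, noetherian/sober for an open subspace) are routine, and the theorem follows formally from Theorem \ref{main2}.
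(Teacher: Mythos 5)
Your overall strategy coincides with the paper's: exhibit a classifying support data on the relevant category (or Verdier quotient) and invoke Theorem \ref{main2}. Parts (1) and (2) under condition (b) are handled correctly and match the paper's proof, including the translation of the Verdier-quotient correspondence into the bijection between specialization-closed subsets of $\sing R$ containing $\m$ and specialization-closed subsets of $\sing R\setminus\{\m\}$.

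However, the hypersurface sub-case of part (2) contains a genuine error. You claim that $\ds(R)=\thick_{\ds(R)}k$ for a hypersurface $R$. This is false unless $R$ has an isolated singularity: one always has $\thick_{\ds(R)}k=\lcm_0(R)=g_{\ssupp_R}(\{\m\})$, which equals $\ds(R)$ precisely when $\sing R=\{\m\}$. For example, $R=k[[x,y,z]]/(xy)$ is a hypersurface with $\dim\sing R=1$, and $R/(x)$ is a maximal Cohen--Macaulay module that is not locally free on the punctured spectrum, so $\thick_{\ds(R)}k\subsetneq\ds(R)$. The fact you actually need is different: for a local hypersurface, \cite[Theorem 6.7]{Tak10} already classifies \emph{all} thick subcategories of $\lcm(R)$ by specialization-closed subsets of $\sing R$, without the ``containing $k$'' restriction. (Equivalently, every nonzero thick subcategory of $\lcm(R)$ automatically contains $\lcm_0(R)$, because every nonempty specialization-closed subset of the local space $\sing R$ contains the unique closed point $\m$.) Replacing your false identity with this correct input, the remainder of your argument goes through and agrees with the paper's.
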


\begin{proof}
The statement $(2)$ directly follows from \cite[Theorem 6.7]{Tak10}, \cite[Theorem B]{NT} and Theorem \ref{main2}.

By \cite[Theorem 6.7]{Tak10}, the assignment
$$
\lcm(R)/ \lcm_0(R) \ni M \mapsto \ssupp_R(M) \setminus \{ \m\} \subseteq \sing R \setminus \{\m\}
$$
defines a classifying support data for $\lcm(R)/ \lcm_0(R)$.
Thus the statement $(1)$ follows from Theorem \ref{main2} again.
\end{proof}

Now, the combination of Proposition \ref{lf} and Theorem \ref{mainspc} completes the proof of Theorem \ref{mainds}.


\begin{rem}
For a hypersurface ring $R$, the triangulated category $\ds(R)$ becomes a pseudo tensor triangulated category (i.e., tensor triangulated category without unit).
It is shown by Yu implicitly in the paper \cite{Yu} that for two hypersurfaces $R$ and $S$, if a singular equivalence between $R$ and $S$ preserves tensor products, then $\sing R$ and $\sing S$ are homeomorphic.
Indeed, $\sing R$ is reconstructed from $\ds(R)$ by using its pseudo tensor triangulated structure.
\end{rem}

Since Theorem \ref{mainds} gives a necessary condition for singular equivalences, we can generate many pairs of rings which are not singularly equivalent.
Let us start with the following lemma.

\begin{lem}
Let $R$ be a Gorenstein local ring with only an isolated singularity and $r >1$ an integer.
Then the ring $R[[u]]/(u^r)$ is a Gorenstein local ring which is locally a hypersurface on the punctured spectrum, and $\sing(R[[u]]/(u^r))$ is homeomorphic to $\spec R$.
\end{lem}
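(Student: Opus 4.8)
The plan is to verify the three assertions—Gorensteinness, locally-hypersurface on the punctured spectrum, and the homeomorphism $\sing(R[[u]]/(u^r)) \cong \spec R$—in turn, writing $T := R[[u]]/(u^r)$. Since $R$ is Gorenstein local and $R[[u]]$ is then Gorenstein local with $u$ a nonzerodivisor, the quotient $T = R[[u]]/(u^r)$ is Gorenstein local as well, being a quotient of a Gorenstein ring by a regular element (note $u^r$ is a nonzerodivisor on $R[[u]]$). Its maximal ideal is $\mathfrak{m}_T = (\mathfrak{m}, u)T$ where $\mathfrak{m}$ is the maximal ideal of $R$.

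Next I would identify $\spec T$ with $\spec R$ as a topological space: since $u$ is nilpotent in $T$, every prime of $T$ contains $u$, so the surjection $T \twoheadrightarrow T/(u) = R$ induces a homeomorphism $\spec R \xrightarrow{\cong} \spec T$, under which a prime $\mathfrak{p}$ of $R$ corresponds to the prime $\mathfrak{P}$ of $T$ with $\mathfrak{P} = (\mathfrak{p}, u)T$ and $T_{\mathfrak{P}} \cong R_{\mathfrak{p}}[[u]]/(u^r)$ (more precisely, the localization of $R_\mathfrak p[u]/(u^r)$; in any case a ring of the form $A[u]/(u^r)$ with $A = R_{\mathfrak p}$ local). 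Now I compute the singular locus. For a non-maximal prime $\mathfrak{P}$ of $T$, corresponding to $\mathfrak{p} \neq \mathfrak{m}$ in $R$, the ring $R_{\mathfrak{p}}$ is regular because $R$ has an isolated singularity, so $A := R_{\mathfrak p}$ is a regular local ring; then $T_{\mathfrak{P}}$ is (a localization of) $A[u]/(u^r)$, which is the quotient of the regular local ring $A[u]_{(\mathfrak p, u)}$ by the single regular element $u^r$, hence a hypersurface. This proves $T$ is locally a hypersurface on the punctured spectrum and, as $r > 1$, each such $T_{\mathfrak{P}}$ is singular (a hypersurface $A[u]/(u^r)$ with $r \ge 2$ is not regular). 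Finally, $T_{\mathfrak m_T}$ itself is a quotient of a regular local ring by $u^r$ with $r \ge 2$ only if $R$ is regular; in general $T_{\mathfrak m_T}$ is singular as well. Hence every prime of $T$ lies in $\sing T$, i.e.\ $\sing T = \spec T$, which under the homeomorphism above is identified with $\spec R$.

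I expect the main point requiring care to be the claim that \emph{every} localization $T_{\mathfrak P}$ is singular (so that $\sing T$ is all of $\spec T$, not a proper subset). For non-maximal $\mathfrak P$ this follows from the hypersurface presentation $A[u]/(u^r)$ with $A$ regular and $r \ge 2$: such a ring has infinite projective dimension as a module over itself (e.g.\ $u$ has the $2$-periodic free resolution $\cdots \to A[u]/(u^r) \xrightarrow{u^{r-1}} A[u]/(u^r) \xrightarrow{u} A[u]/(u^r) \to \cdots$), so it is not regular. For the closed point one argues identically, or simply notes that $T$ is never regular when $r > 1$ since $\mathfrak m_T/\mathfrak m_T^2$ has too large a dimension relative to $\dim T = \dim R$. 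The remaining verifications—Gorensteinness of $T$ and the topological identification $\spec T \cong \spec R$—are routine consequences of $u^r$ being a nonzerodivisor and $u$ being nilpotent, respectively.
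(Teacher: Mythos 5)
Your proof is correct and follows essentially the same route as the paper: identify $\spec T$ with $\spec R$ via the nilpotence of $u$, observe $T_P \cong R_{f(P)}[[u]]/(u^r)$, and conclude that localizations away from the closed point are singular hypersurfaces since $R_\p$ is regular and $r\ge 2$. You supply more detail than the paper's terse sketch (explicitly checking Gorensteinness of $T$ and the non-regularity of every localization via the periodic resolution and the embedding-dimension count), but the underlying argument is the same.
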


\begin{proof}
Set $T:=R[[u]]/(u^r)$.
The natural inclusion $R \to T$ induces a homeomorphism $f: \spec T \xrightarrow{\cong} \spec R$.
Then one can easily check that $P = (f(P), u)T$ for any $P \in \spec T$ and $T_P \cong R_{f(P)}[[u]]/(u^r)$.
Therefore, $T$ is locally a hypersurface on the punctured spectrum and $\sing T = \spec T$.
\end{proof}

\begin{cor}
Let $R$ and $S$ be Gorenstein local rings which have only isolated singularities.
Assume that $\spec R$ and $\spec S$ are not homeomorphic.
Then for any integers $r, s > 1$, one has
$$
\ds(R[[u]]/(u^r)) \not\cong \ds(S[[v]]/(v^s)).
$$
In particular, $\ds(R * R) \not\cong \ds(S * S)$.
Here $R*R$ denotes the trivial extension ring of a commutative ring $R$.
\end{cor}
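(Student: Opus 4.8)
The plan is to reduce everything to the preceding lemma and Theorem~\ref{mainds}. First I would put $T := R[[u]]/(u^r)$ and $T' := S[[v]]/(v^s)$. By the previous lemma, $T$ and $T'$ are Gorenstein local rings that are locally hypersurfaces on their punctured spectra, and there are homeomorphisms $\sing T \cong \spec R$ and $\sing T' \cong \spec S$. In particular $T$ and $T'$ both satisfy hypothesis (a) of Theorem~\ref{mainds}; this is the only place where a line of justification is needed, and it is immediate from the lemma, so no appeal to case (b) is required.

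Next I would argue by contradiction. Suppose $\ds(T) \cong \ds(T')$ as triangulated categories, i.e. that $T$ and $T'$ are singularly equivalent. Then Theorem~\ref{mainds} applied to $T$ and $T'$ produces a homeomorphism $\sing T \cong \sing T'$. Combining this with the two homeomorphisms from the lemma gives $\spec R \cong \sing T \cong \sing T' \cong \spec S$, contradicting the hypothesis that $\spec R$ and $\spec S$ are not homeomorphic. Hence $\ds(R[[u]]/(u^r)) \not\cong \ds(S[[v]]/(v^s))$ for all $r, s > 1$.

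For the final assertion I would observe that the trivial extension $R * R = R \ltimes R$ is isomorphic as a ring to $R[u]/(u^2)$, which in turn coincides with $R[[u]]/(u^2)$ since $u$ is nilpotent; likewise $S * S \cong S[[v]]/(v^2)$. Applying the case $r = s = 2$ of what was just proved then yields $\ds(R * R) \not\cong \ds(S * S)$. I do not anticipate any genuine obstacle: the substantive input is entirely contained in the previous lemma and in Theorem~\ref{mainds}, and the remaining verifications (that $R*R \cong R[[u]]/(u^2)$, and that these rings fall under case (a)) are routine.
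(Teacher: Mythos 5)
Your proposal is correct and follows the paper's argument essentially verbatim: invoke the preceding lemma to see that $R[[u]]/(u^r)$ and $S[[v]]/(v^s)$ fall under case (a) of Theorem~\ref{mainds} and have singular loci homeomorphic to $\spec R$ and $\spec S$, then apply Theorem~\ref{mainds} (contrapositively) and use $R * R \cong R[[u]]/(u^2)$ for the final claim. No discrepancies.
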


\begin{proof}
From the above lemma, we obtain
\begin{enumerate}[\rm(1)]
\item
$R[[u]]/(u^r)$ and $S[[v]]/(v^s)$ satisfies the condition (a) in Theorem \ref{mainds}, 
\item
$\sing R[[u]]/(u^r) \cong \spec R$ and $\sing S[[u]]/(v^r) \cong \spec S$ are not homeomorphic.
\end{enumerate}
Thus, we conclude $\ds(R[[u]]/(u^r)) \not\cong \ds(S[[v]]/(v^s))$ by Theorem \ref{mainds}.

The second statement follows from an isomorphism $R * R \cong R[[u]]/(u^2)$.
\end{proof}

The following corollary says that a Kn\"{o}rrer-type equivalence fails over a non-regular ring.

\begin{cor}
Let $S$ be a Gorenstein local ring and $f$ an $S$-regular element.
Assume that $S/(f)$ has an isolated singularity.
Then one has
$$
\ds(S[[u]]/(f, u^2)) \not\cong \ds(S[[u, v, w]]/(f+vw, u^2)).
$$   
\end{cor}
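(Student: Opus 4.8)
The plan is to reduce this to the previous corollary. Set $R_1:=S/(f)$ and $R_2:=S[[v,w]]/(f+vw)$. Using the isomorphism $R*R\cong R[[u]]/(u^2)$ recorded in the preceding proof, one has $S[[u]]/(f,u^2)\cong R_1*R_1$ and $S[[u,v,w]]/(f+vw,u^2)\cong R_2*R_2$, so the assertion is exactly $\ds(R_1*R_1)\not\cong\ds(R_2*R_2)$. By the ``in particular'' clause of the previous corollary, this follows once we show that $R_1$ and $R_2$ are Gorenstein local rings with only isolated singularities and that $\spec R_1$ and $\spec R_2$ are not homeomorphic.

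First I would dispose of the easy points. The ring $R_1=S/(f)$ is Gorenstein, being the quotient of the Gorenstein local ring $S$ by the nonzerodivisor $f$, and it has an isolated singularity by hypothesis. The power series ring $S[[v,w]]$ is Gorenstein since $S$ is, and $f+vw$ is a nonzerodivisor on $S[[v,w]]=S[[v]][[w]]$ (compare coefficients of powers of $w$ and use that $f$ is regular on $S[[v]]$), so $R_2=S[[v,w]]/(f+vw)$ is Gorenstein local as well. For the Krull dimensions, both $S$ and $S[[v,w]]$ are Cohen--Macaulay and we kill a nonzerodivisor lying in the maximal ideal, so $\dim R_1=\dim S-1$ and $\dim R_2=\dim S+1$; hence $\dim R_2=\dim R_1+2$, and since Krull dimension is a topological invariant, $\spec R_1\not\cong\spec R_2$.

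The main obstacle is the remaining claim: that $R_2=S[[v,w]]/(f+vw)$ has only an isolated singularity. I would attack this through a Kn\"orrer-type analysis of the singular locus of $R_2$, aiming to identify $\sing R_2$ with $\sing R_1$. On the open subscheme of $\spec R_2$ where $v$ (resp. $w$) is invertible one eliminates that variable using the relation $f+vw=0$; on the closed subscheme $V(v,w)$, where $R_2/(v,w)=R_1$, one compares each local ring $(R_2)_{\mathfrak q}$ with the corresponding localization of $R_1$ through that same relation. This is the algebraic counterpart of the Jacobian-criterion computation in the Remark following Kn\"orrer's periodicity. Once this is in hand, all hypotheses of the previous corollary hold for $R_1$ and $R_2$, and we conclude $\ds(S[[u]]/(f,u^2))\not\cong\ds(S[[u,v,w]]/(f+vw,u^2))$. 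I expect the verification that $R_2$ has an isolated singularity to be the genuinely delicate step, since a priori the singular locus of $S[[v,w]]/(f+vw)$ can also detect the singular locus of $S$ itself, so some care (or an additional input on $S$) is needed to see that it collapses onto the closed point.
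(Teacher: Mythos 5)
Your proposal follows essentially the same route as the paper: both reduce to the lemma that $\sing(R[[u]]/(u^r)) \cong \spec R$ for a Gorenstein local ring $R$ with isolated singularity, combined with a dimension count and Theorem \ref{mainds} (equivalently, the preceding corollary in the ``in particular'' form). Your identifications $S[[u]]/(f,u^2)\cong R_1\ast R_1$ and $S[[u,v,w]]/(f+vw,u^2)\cong R_2\ast R_2$ via $A\ast A\cong A[[u]]/(u^2)$ are correct, and the dimension comparison $\dim R_2=\dim R_1+2$ is right.

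More importantly, you have put your finger on a genuine issue that the paper's one-line proof does not address: to invoke the preceding lemma (or to have $T_2 := R_2[[u]]/(u^2)$ be locally a hypersurface on the punctured spectrum, as Theorem \ref{mainds} requires), one needs $R_2=S[[v,w]]/(f+vw)$ to have \emph{only an isolated singularity}, and this is \emph{not} a consequence of $S/(f)$ having an isolated singularity when $S$ itself is singular. Concretely, take $S=k[[x,y]]/(x^2)$ (a Gorenstein hypersurface) and $f=y$ (an $S$-regular element). Then $S/(f)=k[[x]]/(x^2)$ is Artinian and hence has an isolated singularity, while
$$
S[[v,w]]/(f+vw)\;\cong\; k[[x,y,v,w]]/(x^2,\,y+vw)\;\cong\; k[[x,v,w]]/(x^2),
$$
whose singular locus is all of $\spec k[[x,v,w]]/(x^2)\cong\spec k[[v,w]]$, which has dimension $2$. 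So $R_2$ fails to have an isolated singularity, and the appeal to the previous lemma/corollary or to Theorem \ref{mainds} breaks down for the second ring. Your proposed Kn\"orrer-type analysis of $\sing R_2$ (eliminating $v$ or $w$ where one is a unit, and comparing with $R_1$ on $V(v,w)$) only collapses $\sing R_2$ onto the closed point when the ambient ring $S[[v,w]]$ is regular, i.e.\ when $S$ is regular; for singular $S$, the singular locus of $S$ propagates into $\sing R_2$ exactly as you suspected. So the argument does need either an additional hypothesis (e.g.\ $S$ regular, or directly that $S[[v,w]]/(f+vw)$ has an isolated singularity), or a different argument for the ring $T_2$ — a subtlety the paper's terse proof glosses over.
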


\begin{proof}
$\sing S[[u]]/(f, u^2) \cong \spec S/(f)$ and $\sing S[[u, v, w]]/(f+vw, u^2) \cong \spec S[[v, w]]/(f+vw)$ have different dimensions and hence are not homeomorphic. 
\end{proof}

For the last of this paper, we will show that singular equivalence localizes.

\begin{lem}\label{localization}
Let $R$ be a d-dimensional Gorenstein local ring and $\p$ a prime ideal of $R$.
Then a full subcategory $\X_\p:=\{M \in \ds(R) \mid M_\p \cong 0 \mbox{ in } \ds(R_\p) \}$ is thick and there is a triangle equivalence
$$
\ds(R)/\X_\p \cong \ds(R_\p).
$$
\end{lem}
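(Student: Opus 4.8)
The plan is to exhibit $\ds(R_\p)$ as a Verdier quotient of $\ds(R)$ by the thick subcategory $\X_\p$, by first producing a suitable exact functor $\ds(R)\to\ds(R_\p)$ and then checking it induces an equivalence on the quotient. First I would observe that since $R$ is Gorenstein, one has the equivalence $\lcm(R)\xrightarrow{\cong}\ds(R)$ of Example \ref{exam}(1), and likewise $\lcm(R_\p)\xrightarrow{\cong}\ds(R_\p)$; so it suffices to work with stable categories of maximal Cohen--Macaulay modules. The localization functor $(-)_\p\colon \mod R\to\mod R_\p$ is exact and sends projectives to projectives, and it sends a maximal Cohen--Macaulay $R$-module to a maximal Cohen--Macaulay $R_\p$-module (using that $R$, hence $R_\p$, is Gorenstein). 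Hence it descends to an exact functor $L\colon \lcm(R)\to\lcm(R_\p)$, and under the identifications above to an exact functor $\ds(R)\to\ds(R_\p)$. By construction $L(M)\cong 0$ precisely when $M_\p$ is projective over $R_\p$, i.e. exactly when $M\in\X_\p$; this also shows $\X_\p=\Ker L$ is a thick subcategory (kernels of exact functors are always thick).

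Next I would invoke the universal property of Verdier quotients: since $L$ kills $\X_\p$, it factors as $\ds(R)\to\ds(R)/\X_\p\xrightarrow{\bar L}\ds(R_\p)$ with $\bar L$ exact. The remaining task is to show $\bar L$ is an equivalence, i.e. dense and fully faithful. Density is the easy half: every object of $\ds(R_\p)$ is, up to shift, of the form $N$ for some finitely generated $R_\p$-module $N$, and (by the standard fact that finitely generated modules over a localization are localizations of finitely generated modules) $N\cong M_\p$ for some $M\in\mod R$; replacing $M$ by a high syzygy makes it maximal Cohen--Macaulay, and then $\bar L$ hits it. For full faithfulness I would use the explicit description of Hom-sets in a Verdier quotient as a colimit over fractions: a morphism $M\to M'$ in $\ds(R)/\X_\p$ is represented by a roof $M\xleftarrow{s} M''\to M'$ with cone of $s$ in $\X_\p$, and I must show the induced map
$$
\varinjlim \Hom_{\ds(R)}(M'',M')\longrightarrow \Hom_{\ds(R_\p)}(M_\p,M'_\p)
$$
is bijective. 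The key computational input is that stable Hom (equivalently $\ds$-Hom between MCM modules) localizes: $\lHom_R(M,M')_\p\cong \lHom_{R_\p}(M_\p,M'_\p)$, which follows from the fact that $\Hom_R(M,M')$ and the submodule $\mathsf{P}_R(M,M')$ of maps factoring through a free module are finitely generated and localization is exact. Granting this, injectivity and surjectivity of the displayed map are a routine fraction-chasing argument, using that an element of $\lHom_{R_\p}(M_\p,M'_\p)$ is a fraction $\varphi/t$ with $t\notin\p$, and that multiplication by $t\notin\p$ on $M$ has cone in $\X_\p$ (its cone localizes to a cone of an isomorphism, hence to $0$).

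The main obstacle I expect is precisely the full-faithfulness step — more specifically, making the colimit-over-fractions bookkeeping rigorous and verifying that the relevant maps $M''\to M$ appearing as denominators can all be taken to be (syzygies of) honest module maps that become isomorphisms after localizing at $\p$, so that the abstract colimit collapses to the concrete localization $\lHom_R(M,M')_\p$. Everything else — thickness of $\X_\p$, exactness and well-definedness of $L$ and $\bar L$, density — is formal once the Gorenstein hypothesis is used to pass to $\lcm$. I would also remark that this lemma is the local-ring shadow of the general principle that Verdier localization of $\ds$ corresponds to localization of the ring; in the stated generality the Gorenstein assumption is what guarantees $\ds(R)\simeq\lcm(R)$ and that localization preserves the maximal Cohen--Macaulay property.
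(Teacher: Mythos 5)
Your proposal is correct and follows essentially the same route as the paper: pass to $\lcm$ via the Buchweitz equivalence, identify $\X_\p$ as the kernel of the localization functor, and verify that the induced functor on the Verdier quotient is dense (by clearing denominators in a presentation and passing to a syzygy/cosyzygy) and fully faithful (via the isomorphism $\lHom_R(M,M')_\p \cong \lHom_{R_\p}(M_\p,M'_\p)$ together with the observation that multiplication by $a \notin \p$ has cone in $\X_\p$). The "obstacle" you flag is exactly what the paper's proof handles by the same fraction-chasing, so there is no real gap.
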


\begin{proof}
By using the triangle equivalence $\ds(R) \cong \lcm(R)$, we may show the triangle equivalence 
$$
\lcm(R)/\X_\p \cong \lcm(R_\p),
$$
where $\X_\p:=\{M \in \lcm(R) \mid M_\p \cong 0 \mbox{ in } \lcm(R_\p) \}$.	
	
Note that the localization functor $L_\p: \lcm(R) \to \lcm(R_\p), M \mapsto M_\p$ is triangulated.
Since $\X_\p = \Ker L_\p$, $\X_\p$ is a thick subcategory of $\lcm(R)$ and $L_\p$ induces a triangulated functor $\overline{L}_\p : \lcm(R)/\X_\p \to \lcm(R_\p)$.
Thus, we have only to verify that $\overline{L}_\p$ is dense and fully faithful.

(i): $\overline{L_\p}$ is dense.

Let $U$ be an $R_\p$-module.
Take a finite free presentation $R_\p^n \xrightarrow{\del} R_\p^m \to U \to 0$ of $U$.
Then $\del$ can be viewed as an $m \times n$-matrix $(\alpha_{ij})$ with entries in $R_\p$.
Write $\alpha_{ij}= a_{ij}/s$ for some $a_{ij} \in R$ and $s \in R \setminus \p$.
Then the cokernel $M:=\cok((a_{ij}): R^n \to R^m)$ is a finitely generated $R$-module and $M_\p \cong U$.
Since $M_\p$ is a maximal Cohen-Macaulay $R_\p$-module, we obtain isomorphisms
$$
(\syz_R^{-d}\syz_R^d M)_\p \cong \syz_{R_\p}^{-d}\syz_{R_\p}^d M_\p \cong M_\p \cong U
$$ 
in $\lcm(R_\p)$.
This shows that the functor $\overline{L_\p}$ is dense.

(ii): $\overline{L_\p}$ is faithful.
 
Let $\alpha: M \to N$ be a morphism in $\lcm(R)/\X_\p$.
Then $\alpha$ is given by a fraction $f/s$ of morphisms $f:M \to Z$ and $s: N \to Z$ in $\lcm(R)$ such that the mapping cone $C(s)$ of $s$ belongs to $\X_\p$.
Assume $\overline{L}_\p(\alpha) = L_\p(s)^{-1} L_\p(f)= (s_\p)^{-1}f_\p=0$. 
Then $f_\p =0$ in $\lHom_{R_\p}(M_\p, Z_\p)$.
From the isomorphism $\lHom_R(M, Z)_\p \cong \lHom_{R_\p}(M_\p, Z_\p)$, there is $a \in R \setminus \p$ such that $a f=0$ in $\lHom_R(M, Z)$.
Since $a : Z_\p \to Z_\p$ is isomorphism, the mapping cone of the morphism $a : Z \to Z$ in $\lcm(R)$ belongs to $\X_\p$.
Thus, $\alpha = f/s = (af)/(as)=0$ in $\lcm(R)/\X_\p$.
This shows that $\overline{L}_\p$ is faithful.

(iii): $\overline{L_\p}$ is full.

Let $g :M_\p \to N_\p$ be a morphism in $\lcm(R_\p)$ where $M, N \in \lcm(R)$.
By the isomorphism $\lHom_R(M, N)_\p \cong \lHom_{R_\p}(M_\p, N_\p)$, there is a morphism $f: M \to N$ in $\lcm(R)$ and $a \in R \setminus \p$ such that $g = f_\p/a$.
Since the mapping cone of $a : N \to N$ is in $\X_\p$, we obtain a morphism $f/a :M \to N$ in $\lcm(R)/\X_\p$ and $\overline{L}_\p(f/a)=f_\p/a=g$. 
This shows that $\overline{L}_\p$ is full.
\end{proof}

\begin{cor}
Let $R$ and $S$ be Gorenstein local rings which are locally hypersurfaces on the punctured spectra.
If $R$ and $S$ are singularly equivalent, then there is a homeomorphism $\varphi:\sing R \xrightarrow{\cong} \sing S$ such that $R_\p$ and $S_{\varphi(\p)}$ are singularly equivalent for any $\p \in \sing R$.
\end{cor}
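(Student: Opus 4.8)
The plan is to promote the homeomorphism produced in Theorem~\ref{mainds} to one compatible with localization, by tracking the thick subcategories cut out by the localization functors. First I would replace the given singular equivalence with the corresponding triangle equivalence $G\colon\lcm(R)\xrightarrow{\cong}\lcm(S)$ of stable categories of maximal Cohen--Macaulay modules (via Buchweitz' equivalence $\lcm\cong\ds$). If $R$ is regular then $\ds(R)=0=\ds(S)$, so $S$ is regular too, both singular loci are empty, and the assertion is vacuous; hence I may assume $R$ and $S$ are singular, so that $\m_R\in\sing R$ and $\m_S\in\sing S$. By Proposition~\ref{lf}, $G$ induces a triangle equivalence $\overline{G}\colon\overline{\lcm}(R):=\lcm(R)/\lcm_0(R)\xrightarrow{\cong}\overline{\lcm}(S)$. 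By Theorem~\ref{mainspc}(1), $(\sing R\setminus\{\m_R\},\sigma_R)$ with $\sigma_R(M):=\lsupp_R(M)\setminus\{\m_R\}$ is a classifying support data for $\overline{\lcm}(R)$, and similarly $(\sing S\setminus\{\m_S\},\sigma_S)$ for $\overline{\lcm}(S)$; so Corollary~\ref{cor} furnishes a homeomorphism $\varphi_0\colon\sing R\setminus\{\m_R\}\xrightarrow{\cong}\sing S\setminus\{\m_S\}$ with $\varphi_0(\sigma_R(M))=\sigma_S(\overline{G}(M))$ for all $M$. Since $\m_R$ and $\m_S$ are the unique closed points of the noetherian sober spaces $\sing R$ and $\sing S$, the nonempty closed subsets of $\sing S$ are exactly the sets $C'\cup\{\m_S\}$ with $C'$ closed in $\sing S\setminus\{\m_S\}$; hence $\varphi_0$ extends uniquely to a homeomorphism $\varphi\colon\sing R\xrightarrow{\cong}\sing S$ with $\varphi(\m_R)=\m_S$, which is the map in the statement.

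Next I would analyse localization. Fix $\p\in\sing R$. If $\p=\m_R$ then $R_\p=R$ and $S_{\varphi(\p)}=S$, which are singularly equivalent by hypothesis. Assume $\p\neq\m_R$ and set $\X_\p:=\{M\in\lcm(R)\mid M_\p\cong 0\text{ in }\lcm(R_\p)\}$, a thick subcategory of $\lcm(R)$; since a module locally free on the punctured spectrum is in particular locally free at the non-maximal prime $\p$, we have $\lcm_0(R)\subseteq\X_\p$. Put $W_\p:=\{\q\in\sing R\setminus\{\m_R\}\mid\q\not\subseteq\p\}$, which is a specialization-closed subset of $\sing R\setminus\{\m_R\}$. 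Using that $M_\p\cong 0$ in $\lcm(R_\p)$ if and only if $\p\notin\lsupp_R(M)$, one checks that the image of $\X_\p$ in $\overline{\lcm}(R)$ is precisely $g_{\sigma_R}(W_\p)$. Because $\varphi_0$ is a homeomorphism it respects generalizations, so $\varphi_0(W_\p)=W_{\varphi(\p)}$; and the compatibility $\varphi_0(\sigma_R(M))=\sigma_S(\overline{G}(M))$ translates---exactly as in the proof of Corollary~\ref{cor}---into $\widetilde{\overline{G}}(g_{\sigma_R}(W))=g_{\sigma_S}(\varphi_0(W))$ for every specialization-closed $W$. Applying this with $W=W_\p$ shows that $\overline{G}$ carries the image of $\X_\p$ onto the image of $\X_{\varphi(\p)}$, hence restricts to a triangle equivalence between these two thick subcategories.

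Finally, since $\lcm_0(R)\subseteq\X_\p$, the iterated Verdier quotient identity gives $\lcm(R)/\X_\p\cong\overline{\lcm}(R)\big/\bigl(\X_\p/\lcm_0(R)\bigr)$, and likewise for $S$; so $\overline{G}$ descends to a triangle equivalence $\lcm(R)/\X_\p\xrightarrow{\cong}\lcm(S)/\X_{\varphi(\p)}$. By Lemma~\ref{localization} (together with $\ds\cong\lcm$) the source and target are $\ds(R_\p)$ and $\ds(S_{\varphi(\p)})$ respectively, so $R_\p$ and $S_{\varphi(\p)}$ are singularly equivalent, as required.

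The step I expect to carry the real content is the identification of $\X_\p$ in terms of the support data: the Gorenstein hypothesis only yields a classifying support data on the quotient $\overline{\lcm}(R)$, not on $\lcm(R)$ itself, so one must first observe $\lcm_0(R)\subseteq\X_\p$ for non-maximal $\p$ in order to perform the computation in the quotient and then recover the equivalence of the original localizations through an iterated Verdier quotient. The remaining points---the extension of $\varphi_0$ across the closed points, and the translation of Corollary~\ref{cor} into the equality $\widetilde{\overline{G}}(g_{\sigma_R}(W))=g_{\sigma_S}(\varphi_0(W))$---are routine, so I do not anticipate a genuine obstacle.
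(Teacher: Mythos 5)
Your proof is correct and follows essentially the same route as the paper: identify the kernel $\X_\p$ of the localization functor in terms of the support data, show the induced bijection on thick subcategories carries $\X_\p$ to $\X_{\varphi(\p)}$, and invoke Lemma~\ref{localization}. Where you differ is only in being more scrupulous than the paper's rather terse proof --- you explicitly pass to the quotient $\lcm(R)/\lcm_0(R)$ (where the classifying support data actually lives in the Gorenstein case), observe $\lcm_0(R)\subseteq\X_\p$ for non-maximal $\p$, treat $\p=\m_R$ separately, and justify the extension of the homeomorphism across the unique closed points; these are exactly the steps the paper glosses over (its displayed commutative diagram refers to undefined notation left over from an earlier draft), so your version is the more complete rendering of the same argument.
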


\begin{proof}
As in Lemma \ref{localization}, we may consider the category $\lcm(R)$.

Let $F: \lcm(R) \to \lcm(S)$ be a triangle equivalence.
Take a homeomorphism $\varphi: \sing R \to \sing S$ given in Proposition \ref{main2} and Theorem \ref{cor}.
Then by construction, it satisfies
$$
\overline{\{\varphi(\p)\}} = \bigcup_{M \in \lcm(R),\,\, \lsupp_R(M) \subseteq \V(\p)} \lsupp_S F(M)
$$
for each $\p \in \sing R$.
Moreover, the following diagram is commutative:
$$
\begin{CD}
\th_{\TT(\lcm(R))}(\lcm(R)) @>\tilde{F}>> \th_{\TT(\lcm(S))}(\lcm(S)) \\
@Vf_{\lsupp_R}VV @VVf_{\lsupp_S}V \\
\nesc(\sing R) @>>\tilde{\varphi}> \nesc(\sing S),
\end{CD}
$$
where the map $\tilde{F}$ and $\tilde{\varphi}$ are defined by $\tilde{F}(\X):= \{N \in \T' \mid \exists M \in \X \mbox{ such that } N \cong F(M)\}$ and $\tilde{\varphi}(W):=\varphi(W)$, respectively.

Let $\p$ be an element of $\sing R$.
Set $W_\p :=\{\q \in \sing R \mid \q \not\subseteq \p \}$ which is a specialization-closed subset of $\sing R$.
We establish two claims.
\begin{claim}
$g_{\lsupp_R}(W_\p) =\X_\p$.
\end{claim}

\begin{proof}[Proof of Claim 1]
Let $M \in \X_\p$.
Since $M_\p =0$ in $\lcm(R_\p)$, one has $\p \not\in \lsupp_R(M)$.
Thus, $\lsupp_R(M) \subseteq W_\p$ and hence $M \in g_{\lsupp_R}(W_\p)$.

Next, take $M \in g_{\lsupp_R}(W_\p)$.
Then $\lsupp_R(M) \subseteq W_\p$ means that $\p$ does not belong to $\lsupp_R(M)$.
Therefore, $M_\p= 0$ in $\lcm(R_\p)$ and hence $M \in \X_\p$. 
\end{proof}

\begin{claim}
$\varphi(W_\p)= W_{\varphi(\p)}:=\{\q \in \sing S \mid \q \not\subseteq \varphi(\p) \}$.
\end{claim}

\begin{proof}[Proof of Claim 2]
One can easily check that $\varphi$ is order isomorphism with respect to the inclusion relations.
Since $\sing R \setminus W_\p$ has a unique maximal element $\p$, $\varphi(\sing R \setminus W_\p) = \sing S \setminus \varphi(W_\p)$ also has a unique maximal element $\varphi(\p)$.
This shows $\varphi(W_\p)= W_{\varphi(\p)}$.
\end{proof}

From the above two claims, we obtain 
$$
\tilde{F}(\X_{\p})=\tilde{F}(g_{\lsupp_R}(W_{\p})) =g_{\lsupp_S}(\tilde{\varphi}(W_{\p})) = g_{\lsupp_S}(W_{\varphi(\p)})  =  \X_{\varphi(\p)},
$$
where the second equality comes from the above commutative diagram and the last equality is shown by the same proof as Claim 1.
Consequently, the triangle equivalence $F$ induces triangle equivalences:
$$
\lcm(R_\p) \cong \lcm(R) / \X_\p \cong \lcm(S)/ \X_{\varphi(\p)} \cong \lcm(S_{\varphi(\p)}).
$$
\end{proof}

\section{Reconstruction of scheme scheme structure} 
For the last of this paper, we consider reconstructing scheme structure of singular loci from stable category of maximal Cohen-Macaulay modules over hypersurface.

In this section, we always consider a hypersurface $R := S/(f)$, where $S$ is a regular local ring with algebraically closed residue field $k$ and $f$ be a non-zero element of $S$. 
We always consider reduced closed subscheme structure of $\spec R$ on $\sing R$.
The following remarks are an easy observation and a known result.

\begin{rem}[The case of isolated singularities : $\dim \sing R = 0$] \label{rem5}
\begin{enumerate}[\rm(1)]
\item 
Assume that $S$ is Henselian.
Then $\lEnd_{R}(M)$ is a local finite dimensional algebra over the residue field $k$ of $R$ for any non-zero indecomposable object $M$ of $\lcm(R)$.
Thus, $\lEnd_{R}(M)/ J(\lEnd_{R}(M))$ is a finite dimensional division algebra over $k$, where $J(\lEnd_{R}(M))$ denotes the Jacobson radical of $\lEnd_{R}(M)$.
As $k$ is algebraically closed, it is isomorphic to $k$.
Therefore,
$$
\sing R \cong \spec \lEnd_{R}(M)/ J(\lEnd_{R}(M)).
$$
\item (\cite[Corollary 6.5]{Dyc}) 
Assume that $S$ is essentially of finite type.
Note that for a hypersurface singularity $S/(f)$, there is a $\Z/2\Z$-graded DG category $\mathsf{MF}(S, f)$ such that $\mathsf{H}^0(\mathsf{MF}(S, f)) \cong \lcm(R)$.
Then there is an isomorphism of schemes:
$$
\sing R \cong \spec \mathsf{HH}^*(\mathsf{MF}(S, f)).
$$
Here, $\mathsf{HH}^*(\mathsf{MF}(S, f))$ is the Hochschild cohomology ring of $\mathsf{MF}(S, f)$.
\end{enumerate}
\end{rem}

Thus, the next step we have to consider is the case of hypersurfaces with $\dim \sing R = 1$.
In this paper, we consider the following particular singularities.

Let $k$ be an algebraically closed field of characteristic $0$.
We say that a hypersurface singularity $R = k[[x_0, x_1,\ldots, x_d]]/(f)$ is of type $(\a_\infty)$ or $(\d_\infty)$ if $f$ is
\begin{align*}
(\a_\infty)\quad & x_0^2 + x_2^2 + \cdots +x_d^2, \mbox{ or }\\
(\d_\infty)\quad & x_0^2x_1+ x_2^2 + \cdots + x_d^2.
\end{align*}

\begin{rem}\label{rem5-2}
\begin{enumerate}[\rm(1)]
\item 
If $R$ is of type $(\a_\infty)$ or $(\d_\infty)$, then $\dim \sing R = 1$.
\item 
For a hypersurface $R$, $\lcm(R)$ is of countable representation type (i.e., there are only countably many indecomposable objects) if and only if $R$ is isomorphic to the singularity of type $(\a_\infty)$ or $(\d_\infty)$.
Moreover, this is the case, all indecomposable objects of $\lcm(R)$ have been classified completely; see \cite{Sch}. 
\end{enumerate}
\end{rem}

We say that an object $M \in \lcm(R)$ has {\it full-support} if $\supp_{\lcm(R)} (M) = \spec \lcm(R)$.
By Theorem \ref{mainspc}, $M$ is full-support if and only if 
$$
\lsupp_R(M) = \sing R.
$$
Therefore, the condition $\lsupp_R(M) = \sing R$ is categorically determined. 

For a hypersurface singularity of type $(\a_\infty)$ or $(\d_\infty)$, we can reconstruct the scheme structure of the singular locus by using a maximal Cohen-Macaulay module with full support.

\begin{thm}\label{schstr}
Let $R$ be a hypersurface of type $(\a_\infty)$ or $(\d_\infty)$ and $M$ an indecomposable object of $\lcm(R)$ with full-support.
Then $\lEnd_R(M)$ is a commutative ring and one has a scheme isomorphism
$$
\sing R \cong \spec \lEnd_{R}(M).
$$
\end{thm}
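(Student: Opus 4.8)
The plan is to turn the hypothesis that $M$ has full support into an intrinsic condition on $\lcm(R)$, then cut the problem down to a few low-dimensional hypersurfaces via Knörrer periodicity, and finally compute $\lEnd_R(M)$ by hand in those base cases using the classification of indecomposable maximal Cohen--Macaulay modules. First I would record two consequences of Theorem \ref{mainspc}(2), transported from $\ds(R)$ to $\lcm(R)$ via the triangle equivalence $F$ and the identification $\lsupp_R=\ssupp_R\circ F$ of Example \ref{exam}(1). Since $\varphi\colon\sing R\xrightarrow{\cong}\spec\lcm(R)$ carries $\lsupp_R(M)$ onto $\supp_{\lcm(R)}(M)$, the condition that $M$ has full support, i.e. $\lsupp_R(M)=\sing R$, is the same as $\supp_{\lcm(R)}(M)=\spec\lcm(R)$, and is therefore preserved under triangle equivalences (as is indecomposability). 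Second, for a finitely generated module one has $\lEnd_R(M)_\p\cong\lEnd_{R_\p}(M_\p)$ for every prime $\p$, and this localization vanishes exactly when $M_\p$ is free over $R_\p$, hence exactly when $\p\notin\sing R$ (if $\p\notin\sing R$ then $M_\p$ is MCM over the regular ring $R_\p$, so free; if $\p\in\sing R$ then full support gives $M_\p\not\cong 0$). Thus $\lEnd_R(M)$ is a module-finite $R$-algebra with $\supp_R\lEnd_R(M)=\sing R$, and in particular the structure morphism $\theta_M\colon R\to\lEnd_R(M)$ satisfies $\sqrt{\ker\theta_M}=I$, where $I\subseteq R$ is the radical ideal defining $\sing R$.

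Next I would reduce to base cases. Writing $x_{d-1}^2+x_d^2=uv$ over the algebraically closed field $k$, Knörrer periodicity \cite[Chapter 12]{Yos} furnishes a triangle equivalence $\lcm(R)\cong\lcm(R')$ with $R'$ again of type $(\a_\infty)$, resp.\ $(\d_\infty)$, in two fewer variables, and the Jacobian-criterion computation already used for Knörrer-type singular equivalences in Section~4 shows $\sing R\cong\sing R'$ as reduced schemes. Iterating, it suffices to prove the statement for $R_0\in\{\,k[[x,y]]/(y^2),\ k[[x,u,v]]/(uv)\,\}$ in the $(\a_\infty)$ case and for $R_0\in\{\,k[[x_0,x_1]]/(x_0^2x_1),\ k[[x_0,x_1,x_2]]/(x_0^2x_1+x_2^2)\,\}$ in the $(\d_\infty)$ case; in each of these, $\sing R_0$ with its reduced structure is isomorphic to $\spec k[[t]]$ for a single variable $t$. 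Since triangle equivalences preserve $\lEnd$ and, by the previous paragraph, send an indecomposable full-support object to an indecomposable full-support object, the theorem for general $R$ follows from the theorem for these $R_0$.

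Finally I would treat the base cases using Schreyer's classification \cite{Sch} of the (countably many) indecomposable objects of $\lcm(R_0)$ by explicit matrix factorizations. Running through the list one isolates the full-support indecomposables --- equivalently, by the first step, those $M$ whose localization at the generic point $\p$ of $\sing R_0$ is non-free; there are only finitely many, and each satisfies $M_\p\cong(\text{free})\oplus\kappa(\p)$ with a single non-free summand. For each such $M$ one reads off $\Hom_{R_0}(M,M)$ and the submodule $\mathsf{P}_{R_0}(M,M)$ of maps factoring through a free module directly from its matrix factorization, and finds that $\theta_M\colon R_0\to\lEnd_{R_0}(M)$ is surjective with kernel precisely the defining ideal $I$ of $\sing R_0$; hence $\lEnd_{R_0}(M)\cong R_0/I$, which is commutative, and $\spec\lEnd_{R_0}(M)\cong\sing R_0$. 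Transporting back along the Knörrer equivalences then yields $\lEnd_R(M)\cong\lEnd_{R_0}(M_0)\cong R_0/I$ and the asserted scheme isomorphism $\sing R\cong\spec\lEnd_R(M)$.

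The real work --- and the main obstacle --- is this last step: one must actually go through Schreyer's list for each base singularity (the $(\d_\infty)$ modules being the most intricate) in order both to pin down which indecomposables have full support and to verify that in every such case the stable endomorphism ring collapses onto the scalar copy $R_0/I$. The crucial structural point is that no full-support indecomposable can have ``generic rank $>1$'' along $\sing R_0$; otherwise its localization at $\p$ would contain a summand $\kappa(\p)^{\oplus r}$ with $r\ge 2$, and $\lEnd$ would acquire a matrix algebra $\mathrm{M}_r(\kappa(\p))$, contradicting commutativity. A subsidiary point is to confirm that the Knörrer equivalence genuinely transports the \emph{scheme} structure of the singular locus, which is exactly the Jacobian computation recalled above.
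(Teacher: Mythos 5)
Your overall strategy coincides with the paper's: reduce to the four base hypersurfaces in dimensions one and two via Kn\"orrer periodicity, then compute $\lEnd_{R_0}(M)$ explicitly for the finitely many full-support indecomposables, using the identification of $\sing R_0$ with $\spec k[[t]]$. The first two paragraphs (full-support as a categorical invariant, the localization argument for $\supp_R\lEnd_R(M)=\sing R$, and the Kn\"orrer reduction) are all sound and match what the paper does.

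The gap is in the last step, where you assert that in each base case the structure map $\theta_M\colon R_0\to\lEnd_{R_0}(M)$ is \emph{surjective} with kernel the defining ideal $I$ of $\sing R_0$. This is true whenever $M$ is cyclic (and the paper says exactly this for the cases $\dim R_0=1$ of either type and $\dim R_0=2$, type $(\a_\infty)$), but it \emph{fails} in the remaining base case $\dim R_0=2$, type $(\d_\infty)$, where $R_0=k[[x,y,z]]/(x^2y+z^2)$ and $M=(x,z)$ is $2$-generated. Identifying $\End_{R_0}(M)$ with a fractional ideal inside the total quotient ring, the paper computes $\End_{R_0}(M)=T+(z/x)T$ with $T=k[[x,y]]$ and $\mathsf{P}_{R_0}(M,M)=xT+zT$, giving $\lEnd_{R_0}(M)\cong k[[y]]\oplus (z/x)k[[y]]\cong k[[y,w]]/(w^2+y)\cong k[[w]]$ with $w=z/x$. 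The image of $\theta_M$ in this ring is only $k[[y]]=k[[w^2]]$, a proper subring; so $\theta_M$ is not surjective, and you cannot conclude $\lEnd_{R_0}(M)\cong R_0/I$ from it. The conclusion of the theorem still holds because $k[[w]]$ is abstractly isomorphic to $k[[y]]\cong R_0/I$, but that isomorphism is not induced by $\theta_M$; one has to actually carry out the fractional-ideal computation as the paper does. As you already flagged, "the real work" is in these verifications, but the specific mechanism you propose (surjectivity of $\theta_M$) would send you in the wrong direction precisely in the hardest case. A corrected version should replace the surjectivity claim by: for cyclic $M$ argue via $\theta_M$; for the non-cyclic case $(x,z)$ over $(\d_\infty)$ in dimension $2$, compute $\lEnd_{R_0}(M)$ directly and observe it is still abstractly a power series ring in one variable.

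Your subsidiary remark about "generic rank $>1$" forcing a matrix algebra summand in $\lEnd$ is a reasonable heuristic for why all full-support indecomposables here behave like rank-one objects along $\sing R_0$, but it is not itself a proof that $\lEnd_{R_0}(M)$ is commutative; commutativity in the paper falls out of the explicit computations, not from a general rank argument.
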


\begin{proof}
Thanks to Kn\"{o}rrer's periodicity, we have only to prove in dimension one and two.

\noindent
Case : $\dim R =1$ \& $(\a_\infty)$

In this case, $R= k[[x, y]]/(x^2)$ and which has only one indecomposable maximal Cohen-Macaulay $R$-module $M= R/(x)$ with full-support.
Note that the natural map
$$
R/(x) \to \End_R(M)
$$ 
is an isomorphism.
Since $\Hom_R(R/(x), R) \cong (0 :_R x) = (x)$, one has $\mathsf{P}_R(M, M) = 0$.
Thus, there is a scheme isomorphism
$
\sing R \cong \spec \lEnd_R(M).
$

The proofs for the cases $\dim R = 1$ \& $(\d_\infty)$  and $\dim R = 2$ \& $(\a_\infty)$ are similar since indecomposable maximal Cohen-Macaulay modules with full-supports are cyclic.
Therefore, we have only to prove the following case.

\noindent
Case : $\dim R =2$ \& $(\d_\infty)$

In this case, $R = k[[x, y, z]]/(x^2y + z^2)$ and $M= (x, z) \subseteq R$ is the only one indecomposable maximal Cohen-Macaulay $R$-module with full-support.
Since $z^2 = -x^2y$ in $R$, $R$ is isomorphic to $T \oplus Tz$ as an $T:= k[[x, y]]$-module.

As $M$ is a rank one maximal Cohen-Macaulay $R$-module, we identify $\Hom_R(M, R)$ and $\End_R(M)$ with $R$-submodules of the total quotient field $Q(R)$ of $R$ by 
\begin{align*}
\End_R(M) \subseteq \Hom_R(M, R) &\hookrightarrow \Hom_R(M, R) \otimes Q(R)\\ 
&\cong \Hom_{Q(R)}(M\otimes Q(R), R \otimes Q(R)) \cong Q(R),
\end{align*}
which sends $f$ to $f(x)/x$.
Under this identification, $\Hom_R(M, R)$, $\End_R(M)$ and $\mathsf{P}_R(M, M)$ are given as the followings.

\begin{claim*}
$\End_R(M) = \Hom_R(M, R) = \{a/x \mid az \in xR\} = T + (z/x)T$. 
\end{claim*}

\begin{proof}[Proof of Claim]
By the above identification, one has $\Hom_R(M, R) \subseteq \{a/x \mid a \in R\}$.
On the other hand, for $a/x \in Q(R)$, 
$$
a/x \cdot M \subseteq R \Leftrightarrow a/x \cdot z \in R \Leftrightarrow az \in xR.
$$
Thus the second equality holds.

Take an element $a/x \in Q(R)$ with $az \in xR$ and denote 
$$
a = f + zg, \,\, az = x(u + zv)
$$
with $f, g, u, v \in T$.
Then $zf - x^2yg =  az = xu + xzv$ and hence $-x^2y g = xu$, $f = xv$.
Therefore, $a/x =(xv + zg)/x = v + (z/x)g$.
This shows the inclusion $ \{a/x \mid a \in R\} \subseteq T + (z/x)T$.
Clearly, $ T + (z/x)T$ is contained in $\End_R(M)$.
\renewcommand{\qedsymbol}{$\square$}
\end{proof}

\begin{claim*}
$\mathsf{P}_R(M, M)  = M$.
\end{claim*}

\begin{proof}[Proof of Claim]
Take an element $\xi \in \mathsf{P}_R(M, M)$.
Then there exist $\alpha_i \in \Hom_R(M, R)$, $\beta_i \in M$ $(i = 1, 2, \ldots, r)$ such that
$$
\xi = \beta_1 \alpha_1 +  \beta_2 \alpha_2 + \cdots +  \beta_r \alpha_r.  
$$ 
Denote $\alpha_i = f_i + (z/x)g_i$ and $\beta_i = xu_i + zv_i$ with $f_i, g_i, u_i, v_i \in T$ $(i = 1, 2, \ldots, r)$.
Then, we have
$$
\beta_i \alpha_i = x u_i f_i + z u_i g_i + z v_i f_i + (z^2/x) v_i g_i = x(u_i f_i - y v_i g_i) + z(u_i g_i + v_i f_i).
$$
This shows the inclusion $\subseteq$.

Conversely, for any element $xu+zv \in M$ with $u, v \in T$, it is a composition
$$
\xymatrix{
M \ar[rd]_{u+(z/x)v} \ar[rr]^{xu+zv} && M \\
& R \ar[ru]_{x} &. \\
}
$$
This concludes $\mathsf{P}_R(M, M) = M$.
\renewcommand{\qedsymbol}{$\square$}
\end{proof}

From the above two claims, we obtain
$$
\lEnd_R(M) \cong (T + (z/x)T) / (xT + z T) \cong k[[y]] + (z/x)k[[y]] \cong k[[y, w]]/(w^2+y) \cong k[[y]].
$$
Thus, we get an isomorphism
$
\sing R \cong \spec \lEnd_R(M)
$
of schemes.
\end{proof}

\begin{rem}
As I mentioned in Remark \ref{rem5-2}(2), singular loci of hypersurfaces of type $(\a_\infty)$ and $(\d_\infty)$ are characterized in terms of stable categories of maximal Cohen-Macaulay modules.
The advantage of this theorem is that it gives more explicit description of their singular loci.  
\end{rem}

\begin{ac} 
The author is grateful to his supervisor Ryo Takahashi for many supports and his helpful comments.
\end{ac}




\begin{thebibliography}{99}
\bibitem[AB]{AB}
{\sc L. L. Avramov; R.-O. Buchweitz}, Support varieties and cohomology over complete intersections, {\em Invent. Math.} {\bf 142} (2000), no. 2, 285--318.

\bibitem[AIL]{AIL}
{\sc L. L. Avramov; S. B. Iyengar; J. Lipman},  Reflexivity and rigidity for complexes I. Commutative rings, {\em Algebra Number Theory} {\bf 4} (2010), no. 1, 47--86.
	
\bibitem[Bal02]{Bal02}
{\sc P. Balmer}, Presheaves of triangulated categories and reconstruction of schemes, {\em Math. Ann.} {\bf 324} (2002), no. 3, 557--580.

\bibitem[Bal05]{Bal05}
{\sc P. Balmer}, The spectrum of prime ideals in tensor triangulated categories, {\em J. Reine Angew. Math.} {\bf 588} (2005), 149--168.

\bibitem[BM]{BM}
{\sc H. Bass; M. P. Murthy}, Grothendieck groups and Picard groups of abelian group rings, {\em Ann. of Math.} {\bf 86} (1967), 16--73.

\bibitem[Ben]{Ben}
{\sc D. J. Benson}, Representations and cohomology II: Cohomology of groups and modules, {\em Cambridge Stud. Adv. Math.} {\bf 31}, Cambridge University Press (1991).

\bibitem[BIK]{BIK}
{\sc D. J. Benson; S. B. Iyengar; H. Krause}, Stratifying modular representations of finite groups, {\em Ann. of Math.} {\bf 174} (2011), 1643–-1684.

\bibitem[BCR]{BCR}
{\sc D. J. Benson; J. F. Carlson; J. Rickard}, Thick subcategories of the stable module category, {\em Fund. Math.} {\bf 153} (1997), no. 1, 59--80.



\bibitem[Buc]{Buc} 
{\sc R.-O. Buchweitz}, Maximal Cohen-Macaulay modules and Tate-cohomology over Gorenstein rings, unpublished manuscript (1986), \texttt{http://hdl.handle.net/1807/16682}.

\bibitem[CI]{CI}
{\sc J. F. Carlson; S. B. Iyengar}, Thick subcategories of the bounded derived category of a finite group, {\em Trans. Amer. Math. Soc.} {\bf 367} (2015), no. 4, 2703--2717.

\bibitem[CDT]{CDT}
{\sc O. Celikbas; H. Dao; R. Takahashi}, Modules that detect finite homological dimensions, {\em Kyoto J. Math.} {\bf 54} (2014), no. 2, 295--310.


\bibitem[Che]{Che(1)}
{\sc X.-W. Chen}, The singularity category of an algebra with radical square zero, {\em Doc. Math.} {\bf 16} (2011), 921--936.

\bibitem[DT]{DT14}
{\sc H. Dao; R. Takahashi}, The radius of a subcategory of modules, {\em Algebra Number Theory} {\bf 8} (2014), no. 1, 141--172.



\bibitem[Dyc]{Dyc}
{\sc T. Dyckerhoff}, Compact generators in categories of matrix factorizations, {\em Duke Math. J.} {\bf 159} (2011), no. 2, 223--274.

\bibitem[Eis]{Eis}
{\sc D. Eisenbud}, Subrings of artinian and noetherian rings, {\em Math. Ann.} {\bf 185} (1973), 247--249.

\bibitem[Hap]{Hap} {\sc D. Happel}, Triangulated categories in the representation theory of finite dimensional algebras, {\em London Math. Soc. Lecture Note Series} {\bf{119}}, Cambridge University Press (1988).

\bibitem[HPS]{HPS} {\sc M. Hovey; J. H. Palmieri; N. P. Strickland}, Axiomatic stable homotopy theory, {\em Mem. Amer. Math. Soc.} {\bf{128}} (1997), no. 610.

\bibitem[IW]{IW}
{\sc O. Iyama; M. Wemyss}, Singular derived categories of $\mathbb{Q}$-factorial terminalizations and maximal modification algebras, {\em Adv. Math.} {\bf 261} (2014), 85--121.

\bibitem[Kra]{Kra}
{\sc H. Krause}, The stable derived category of a Noetherian scheme, {\em Compos. Math.} {\bf 141} (2005), no. 5, 1128--1162.

\bibitem[KS]{KS} 
{\sc H. Krause; G. Stevenson}, A note on thick subcategories of stable derived categories, {\em Nagoya Math. J.} {\bf{212}} (2013), 87--96.

\bibitem[Lin]{Lin} 
{\sc M. Linckelmann}, Stable equivalences of Morita type for selfinjective algebras and p-groups, {\em Math. Zeit.} {\bf{223}} (1996), 87--100.

\bibitem[Mor]{Mor}
{\sc K. Morita}, Duality of modules and its applications to the theory of rings with minimum condition, {\em Sci. Rep. Tokyo Kyoiku Daigaku, Sect. A} {\bf 6} (1958), 85--142.

\bibitem[Muk]{Muk}
{\sc S. Mukai}, Duality between $D(X)$ and $D(\hat{X})$ with its application to Picard sheaves, {\em Nagoya Math. J.} {\bf 81} (1981), 153--175.


\bibitem[NT]{NT}
{\sc S. Nasseh; R. Takahashi}, Local rings with quasi-decomposable maximal ideal, preprint, \texttt{arXiv:1704.00719}.


\bibitem[Orl97]{Orl97} 
{\sc D. Orlov}, Equivalences of derived categories and $K3$ surfaces, {\em J. Math. Sci.} {\bf{84}} (1997), 1361--1381.

\bibitem[Orl04]{Orl04}
{\sc D. Olrov}, Triangulated categories of singularities and D-branes in Landau-Ginzburg model, {\em Proc. Steklov Inst. Math.} {\bf 246} (2004), no. 3, 227--248. 


\bibitem[Qui]{Qui} 
{\sc D. Quillen}, The spectrum of an equivariant cohomology ring I, {\em Ann. Math.} {\bf{94}} (1971), 549--572.

\bibitem[Ric]{Ric}
{\sc J. Rickard}, Morita theory for derived categories, {\em J. London Math. Soc.} {\bf 39} (1989), 436--456.


\bibitem[Sch]{Sch}
{\sc F.-O. Schreyer}, Finite and countable CM-representation type, Singularities, representation of algebras, and vector buncles, {\em Springer Lecture Notes in Math.} {\bf 1273} (1987), 9--34.


\bibitem[Ste]{Ste}
{\sc G. Stevenson}, Subcategories of singularity categories via tensor actions, {\em Compos. Math.} {\bf 150} (2014), no. 2, 229--272.


\bibitem[Tak]{Tak10}
{\sc R. Takahashi}, Classifying thick subcategories of the stable category of Cohen--Macaulay modules, {\em Adv. Math.} {\bf 225} (2010), no. 4, 2076--2116.



\bibitem[Tho]{Th}
{\sc R. W. Thomason}, The classification of triangulated subcategories, {\em Compos. Math.} {\bf 105} (1997), no. 1 , 1--27.


\bibitem[Yos]{Yos}
{\sc Y. Yoshino}, Cohen-Macaulay modules over Cohen-Macaulay rings, London Mathematical Society Lecture Note Series, {\em Cambridge University Press, Cambridge}, 1990.

\bibitem[Yu]{Yu}
{\sc X. Yu}, The triangular spectrum of matrix factorizations is the singular locus, {\em Proc. Amer. Math. Soc.} {\bf 144} (2016), no. 8, 3283--3290.

\bibitem[ZZ]{ZZ}
{\sc G. Zhou; A. Zimmermann}, On singular equivalences of Morita type, {\em J. Algebra} {\bf 385} (2013), 64--79.

\end{thebibliography}
\end{document}